\newtheorem{theorem}{Theorem}[section]
\newtheorem{definition}[theorem]{Definition}
\newtheorem{lemma}[theorem]{Lemma}
\newtheorem{example}[theorem]{Example}
\newtheorem{construction}[theorem]{Construction}
\newtheorem{proposition}[theorem]{Proposition}
\newtheorem{problem}[theorem]{Problem}
\newtheorem{claim}[theorem]{Claim}
\theoremstyle{definition}
\newtheorem{remark}[theorem]{Remark}
\newtheorem{program}{Program}
\title{A polynomial resultant approach to algebraic constructions of extremal graphs}
\author{Tao Zhang$^{\text{a}}$,~ Zixiang Xu$^{\text{b}}$~ and  Gennian Ge$^{\text{b}}$\\
\footnotesize $^{\text{a}}$ Institute of Mathematics and Interdisciplinary Sciences, Xidian University, Xi'an 710071, China.\\
\footnotesize $^{\text{b}}$ School of Mathematical Sciences, Capital Normal University, Beijing 100048, China.\\
}
\begin{document}
\date{}

\maketitle

\begin{abstract}
The Tur\'{a}n problem asks for the largest number of edges ex$(n,H)$ in an $n$-vertex graph not containing a
fixed forbidden subgraph $H$, which is one of the most important problems in extremal graph theory. However the order of
magnitude of ex$(n,H)$ for bipartite graphs is known only in a handful of cases. In particular, giving explicit constructions of extremal graphs is very challenging in this field. In this paper, we develop a polynomail resultant approach to algebraic construction of explicit extremal graphs, which can efficiently decide whether a specified structure exists. A key insight in our approach is the multipolynomial resultant, which is a fundamental tool of computational algebraic geometry. Our main results include the matched lowers bounds for Tur\'{a}n number of $1$-subdivision of $K_{3,t_{1}}$ and linear Tur\'{a}n number of Berge theta hyerpgraph $\Theta_{3,t_{2}}^{B}$ with $t_{1}=25$ and $t_{2}=217$. Moreover, the constant $t_{1}$ improves the random algebraic construction of Bukh and Conlon~[Rational exponents in extremal graph theory, J. Eur. Math. Soc. 20 (2018), 1747-1757] and makes progress on the known estimation for the smallest value of $t_{1}$ concerning a problem posed by Conlon, Janzer and Lee ~[More on the extremal number of subdivisions, Combinatorica, to appear], while the constant $t_{2}$ improves a result of He and Tait~[Hypergraphs with few berge paths of fixed length between vertices, SIAM J. Discrete Math., 33(3), 1472-1481].
\medskip

\noindent {{\it Key words and phrases\/}: Tur\'{a}n number, resultant, algebraic construction, subdivision, Berge theta hypergraph.}

\smallskip

\noindent {{\it AMS subject classifications\/}: 05C35, 05C65.}
\end{abstract}

\section{Introduction}
Given a graph $H$, the Tur\'{a}n number $\text{ex}(n,H)$ is the maximum number of edges in an $n$-vertex graph that does not contain $H$ as a subgraph. The estimation of $\text{ex}(n,H)$ for various graphs $H$ is one of the most important problems in extremal graph theory. For general graph $H,$ the famous Erd\H{o}s-Stone-Simonovits Theorem \cite{1966ESS, 1946ErodsBAMS} gives that
\begin{align*}
\text{ex}(n,H)=\bigg(1-\frac{1}{\chi(H)-1}+o(1)\bigg)\binom{n}{2},
\end{align*}
where $\chi(H)$ is the chromatic number of $H$. This theorem asymptotically solves the problem when $\chi(H)\ge3$. However, finding good bounds for $\text{ex}(n,H)$ for bipartite graphs $H$ is in general very difficult. The order of magnitude of $\text{ex}(n,H)$ is not known even in some very basic cases such as when $H$ is the even cycle $C_{8}$, complete bipartite graph $K_{4,4}$ or the $3$-dimensional cube $Q_{3}$. In general, complete bipartite graphs and even cycles are two of the most important classes of bipartite graphs.
A well-known theorem of K\"{o}v\'{a}ri, S\'{o}s and Tur\'{a}n~\cite{Kovari1954} showed that $\text{ex}(n,K_{s,t})=O(n^{2-1/s})$ for any integers $t\ge s$. In 1974, Bondy and Simonovits~\cite{BondyEvenCycle1974} gave a general upper bound $\text{ex}(n,C_{2\ell})=O(n^{1+\frac{1}{\ell}})$.

Showing matched lower bounds for Tur\'{a}n number of a specified bipartite graph is one of the most challenging problems in extremal graph theory. Usually there are three types of constructions as follows:
\begin{enumerate}
  \item \textbf{Probabilistic method with alterations}, which is quite general and easy to apply, but usually
does not give tight bounds. For example, for a given bipartite graph $H$ with $a$ vertices and $b$ edges, the standard probabilistic argument will give $\text{ex}(n,H)=\Omega(n^{\frac{2b-a}{b-1}}).$ In particular, when $H=K_{s,t}$ with $s\le t$, this method gives $\text{ex}(n,K_{s,t})=\Omega(n^{2-\frac{s+t-2}{st-1}})$, which does not match the upper bound $\text{ex}(n,K_{s,t})=O(n^{2-\frac{1}{s}})$.
  \item \textbf{Explicit constructions}, which often give tight bounds but appear to be somewhat magical and only work
in certain special situations. For example, one can view the vertices as the points in vector space over finite field, and two vertices are adjacent if they satisfy a system of equations in finite field. Using the above idea, when $s=2,3,$ matching lower bounds for $\text{ex}(n,K_{s,t})$ were found in \cite{Brown1966, Erdos1966}. For general values of $s$, Alon Koll\'{a}r, R\'{o}nyai and Szab\'{o}~\cite{ARS99, KRS96} constructed the so-called projective norm graphs, which yields $\text{ex}(n,K_{s,t})=\Omega(n^{2-\frac{1}{s}})$ when $t\geqslant (s-1)!+1$. Moreover, using the relation with finite geometry, Benson~\cite{1966Benson} and Singleton~\cite{1996Singleton} constructed extremal graphs for $C_{6}$ and $C_{10}$ in 1966.  Several alternative constructions can be found in~\cite{2021Conlon, 1995Lazebnik, 2005Mellinger, 1991Wenger}.
  \item \textbf{Random algebraic method}, which combines the advantages of both the flexibility of randomized
constructions and the rigidity of algebraic constructions. This method was originally proposed by Blagojevic, Karasev Bukh~\cite{Bukh2013IJM}, and then developed by Bukh~\cite{Bukh2015}. In the random algebraic method of Bukh~\cite{Bukh2015}, usually the vertex set is the vector space over finite field and the edge set is determined by a number of random polynomials. The method has been applied to complete bipartite graphs~\cite{Bukh2015}, theta graphs~\cite{Conlon2014} and several other types of hypergraphs \cite{2019He,2018Ma}. Moreover, Bukh and Conlon~\cite{Bukh2018} successfully proved the so-called rational Tur\'{a}n exponent conjecture via this method. It is worth noticing that, the random algebraic method usually requires some parameters to be very large due to Lang-Weil bound~\cite{LangWeil1954}. Bukh and Tait~\cite{2020cpcBukh} and the authors~\cite{2020XZGEUJC} studied how these Tur\'{a}n numbers depend on such large parameters of the forbidden graphs and hypergraphs. More recently, Bukh~\cite{Bukh2021} developed a novel version of random algebraic method to construct graphs that match the K\"{o}v\'{a}ri-S\'{o}s-Tur\'{a}n bound for $t$ that is only exponential in $s$.
\end{enumerate}

Our original motivation for this paper can be stated as follows.

{\textbf{Motivation}}: If we have used the random algebraic construction of Bukh~\cite{Bukh2015} to obtain tight lower bounds for Tur\'{a}n number of some given graph $H$, can we give an explicit construction of extremal graph for $H$? Furthermore, as we have mentioned above, the random algebraic method usually requires some parameters to be very large, can we obtain a better constant dependence?

As far as we know, there have been two such explicit constructions with similar motivation in recent years. Verstra\"{e}te and Williford ~\cite{VW19} gave an explicit construction of $\theta_{4,3}$-free graph with $\Omega(n^{\frac{5}{4}})$ edges, which significantly improves the constant compared with the result of Conlon~\cite{Conlon2014}. Pohoata and Zakharov~\cite{2020Norm} introduced a high uniformity generalization of the so-called (projective) norm graphs of Alon, Koll\'{a}r, R\'{o}nyai and Szab\'{o}~\cite{ARS99, KRS96}, and showed that $\text{ex}_{r}(n,K_{s_{1},s_{2},\ldots,s_{r}}^{(r)})=\Omega(n^{r-\frac{1}{s_{1}s_{2}\cdots s_{r-1}}})$ holds for all integers $s_{1},\ldots ,s_{r}\ge 2$ such that $s_{r}\ge((r-1)(s_{1}\cdots s_{r-1}-1))!+1.$ This improves upon the result of Ma, Yuan and Zhang~\cite{2018Ma}.

In this paper, we mainly focus on two forbidden structures. The first is the 1-subdivision of complete bipartite graphs. Recall that the $1$-subdivision of a graph $H$, denoted by $H'$, is the graph obtained by replacing the edges of $H$ by internally disjoint paths of length $2$. In \cite{CJL19}, Conlon, Janzer and Lee showed that $\text{ex}(n,K_{s,t}')=O(n^{\frac{3}{2}-\frac{1}{2s}})$ for $2\le s\le t$. Combining the random algebraic construction in \cite{Bukh2018}, they also showed the upper bound is tight when $t$ is sufficiently large compared to $s$. In the same paper, they asked the following problem:
\begin{problem}
For any integer $s\ge2$, estimate the smallest $t$ such that $\textup{ex}(n,K_{s,t}')=\Omega(n^{\frac{3}{2}-\frac{1}{2s}})$.
\end{problem}
 The case $s=2$ amounts to estimating the extremal number of the theta graph $\theta_{4,t}$. As we have mentioned, Verstra\"{e}te and Williford \cite{VW19} gave an algebraic construction which yields $\text{ex}(n,\theta_{4,3})=\Omega(n^{\frac{5}{4}})$. In this paper, we consider the next case $s=3$, and prove the following result.
\begin{theorem}\label{subdthm}
$\textup{ex}(n,K_{3,25}')=\Theta(n^{\frac{4}{3}})$.
\end{theorem}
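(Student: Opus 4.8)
The upper bound $\textup{ex}(n,K_{3,25}')=O(n^{4/3})$ is immediate: apply the theorem of Conlon, Janzer and Lee stated above with $s=3$ and $t=25$, noting that $2\le 3\le 25$ and $\frac32-\frac1{6}=\frac43$. So the whole task is the matching lower bound $\textup{ex}(n,K_{3,25}')=\Omega(n^{4/3})$, and the plan is to produce an explicit algebraic graph attaining it.

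\noindent\textbf{The construction.}
Fix a prime power $q$ and let $V=\mathbb{F}_q^3$, so $n=q^3$. I would declare two vertices $u,w$ adjacent exactly when a fixed symmetric pair of bounded-degree polynomials vanishes, $F_1(u,w)=F_2(u,w)=0$, the pair being chosen (generically, but written down explicitly) so that for every $u$ the link $L_u:=N(u)$ is the set of $\mathbb{F}_q$-points of a geometrically irreducible curve of bounded degree. Then, by the Lang--Weil estimate, $|L_u|=(1+o(1))q$ for all but $o(n)$ vertices, whence $|E(G)|=\Theta(nq)=\Theta(n^{4/3})$; for $n$ not a perfect cube one pads with isolated vertices at the cost of only a constant factor. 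It remains to prove that $G$ is $K_{3,25}'$-free, and once that is done the theorem follows.

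\noindent\textbf{Ruling out $K_{3,25}'$ via the resultant.}
Suppose $G$ contained a copy of $K_{3,25}'$, with branch vertices $a_1,a_2,a_3$ on the side of size $3$, branch vertices $b_1,\dots,b_{25}$ on the other side, and internally disjoint paths $a_i\,c_{ij}\,b_j$. Since $c_{ij}\in N(a_i)\cap N(b_j)=L_{a_i}\cap L_{b_j}$, each $b_j$ satisfies $L_{b_j}\cap L_{a_i}\neq\emptyset$ for $i=1,2,3$; writing $S(a_1,a_2,a_3)$ for the set of all vertices $b$ with this property, the $25$ distinct vertices $b_1,\dots,b_{25}$ all lie in $S(a_1,a_2,a_3)$, so it suffices to show that $|S(a_1,a_2,a_3)|\le 24$ for every triple of distinct vertices. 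This is where the multipolynomial resultant enters. For vertices $a,b$, the condition $L_b\cap L_a\neq\emptyset$ says that the four polynomials $F_1(b,\cdot),F_2(b,\cdot),F_1(a,\cdot),F_2(a,\cdot)$, homogenised to forms in the four homogeneous coordinates of a common neighbour $c\in\mathbb{P}^3$, have a common zero; by resultant theory this is equivalent to the vanishing of their multipolynomial resultant $R(b,a)$, a polynomial in the coefficients --- hence in the coordinates of $a$ and $b$ --- whose degree in the coordinates of $b$ is computed once and for all from the degrees of $F_1,F_2$, independently of $q$. Thus $S(a_1,a_2,a_3)$ is contained in the $\mathbb{F}_q$-points of $\{\,b:\ R(b,a_1)=R(b,a_2)=R(b,a_3)=0\,\}$, the intersection of three hypersurfaces in $\mathbb{A}^3$, and a B\'ezout-type bound on this intersection yields $|S(a_1,a_2,a_3)|\le 24$. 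In this way the verification of $K_{3,25}'$-freeness for all $q$ at once is reduced to a single finite, $q$-independent computation, namely identifying $R$ and controlling $\deg_b R$ together with the intersection geometry; the degrees of $F_1,F_2$ are chosen precisely so that this bound comes out to $24$, which is why $t_1=25$.

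\noindent\textbf{The main obstacle.}
The delicate step --- and the reason a soft Lang--Weil argument would only produce an enormous constant --- is excluding degeneracy: if for some triple of distinct $a_1,a_2,a_3$ the three hypersurfaces $R(b,a_i)=0$ shared a positive-dimensional component, or if $R$ vanished identically, or if some link $L_{a_i}$ were reducible in an uncontrolled way, then $S(a_1,a_2,a_3)$ could be infinite and the count would collapse. The plan is to rule this out directly from the explicit equations: one argues that a shared component would force two of the $a_i$ to coincide, or would force the link of some vertex into a shape the construction forbids, so that for distinct $a_1,a_2,a_3$ the intersection is genuinely zero-dimensional of degree at most $24$. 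Carrying out this case analysis while simultaneously keeping the degree count at $24$ is where essentially all the difficulty lies; the same scheme, applied to the Berge theta hypergraph with heavier bookkeeping, is what yields the larger constant $t_2=217$.
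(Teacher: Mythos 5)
Your upper bound and the broad shape of the plan (an explicit algebraic graph plus a resultant-based degree bound on a ``bad'' variety) agree with the paper, but the mechanism you describe is genuinely different from what the paper does, and as stated it has gaps that I do not see how to fill without essentially switching to the paper's strategy.

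First, the counting target. You reduce to showing that the set $S(a_1,a_2,a_3)$ of vertices $b$ with $L_b\cap L_{a_i}\neq\emptyset$ for all $i$ has size at most $24$. The paper instead bounds by $24$ the number of $4$-tuples $(x,y,z,w)$ of \emph{pairwise distinct} vertices with $a_1x,xw,a_2y,yw,a_3z,zw$ all edges; this is strictly weaker than $|S(a_1,a_2,a_3)|\le 24$, because a vertex $w$ whose common neighbours with, say, $a_1$ and $a_2$ are forced to coincide contributes to $S$ but not to the paper's count. That weaker statement already kills every copy of $K_{3,25}'$ (the subdivision vertices are pairwise distinct by definition), so the paper never has to establish the stronger fact you set yourself; it is not clear the stronger fact even holds in the paper's construction without further degenerate-case analysis.

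Second, the resultant mechanism. You propose to take the Macaulay (multipolynomial) resultant $R(b,a)$ of the four homogenised link polynomials in $\mathbb{P}^3$, observe that $S(a_1,a_2,a_3)$ sits inside $\{b:R(b,a_1)=R(b,a_2)=R(b,a_3)=0\}$, and invoke B\'ezout. Even granting that $R$ is not identically zero and that the common affine locus is zero-dimensional, the degree bookkeeping does not come out to $24$. For four forms of degrees $(d_1,d_2,d_3,d_4)$ in $\mathbb{P}^3$, the Macaulay resultant has degree $\prod_{j\neq i}d_j$ in the coefficients of the $i$-th form, and those coefficients are themselves polynomials in $b$; with adjacency given by equations such as $a_2+c_3=a_1c_1^2$, $a_3+c_2=a_1^2c_1$ (so degrees $2$ and $1$ in $c$), a straightforward estimate already puts $\deg_b R$ around $6$, and B\'ezout on three degree-$6$ hypersurfaces in $\mathbb{A}^3$ gives a bound on the order of $216$, not $24$. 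The paper gets $24$ by a completely different route: it eliminates one variable at a time with Sylvester resultants, factors the intermediate polynomials, \emph{divides out} factors that a careful choice of the vertex set guarantees are nonzero (e.g. $a_1-w_1$, $a_1+w_1$, $a_1+b_1+c_1+w_1$), and then reads off a final univariate degree bound ($8\times 3$ in the worst case), with several separate sub-cases. No single B\'ezout count recovers this.

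Third, and relatedly, you take the vertex set to be all of $\mathbb{F}_q^3$ and plan to control degeneracies ``directly from the explicit equations.'' In the paper the vertex set is $S\times\mathbb{F}_p\times\mathbb{F}_p$ for a carefully chosen $S\subset\mathbb{F}_p$ (a set of residues $\equiv 1\pmod 3$ inside $\{1,\dots,(p-5)/6\}$, with $p\equiv 5\pmod 6$), precisely so that quantities like $x+y$, $x+y+z+t$, $x+5y$ and $x^2+xy+y^2$ never vanish; this is what makes the divisions in the iterated resultant computation legitimate and is where the nontrivial constant comes from. Without some such restriction the factors you would need to cancel can vanish, and the iterated elimination (or your B\'ezout bound) degenerates. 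Your proposal does not explain how to achieve this with $V=\mathbb{F}_q^3$ and a ``generic'' $(F_1,F_2)$.

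In short: the plan is in the right spirit, but the heart of the proof --- the explicit equations, the restricted vertex set that kills the degenerate factors, and the iterated Sylvester-resultant computation with case analysis --- is absent, and the Macaulay-resultant-plus-B\'ezout shortcut you propose would not yield the constant $24$.
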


The previously known smallest value of $t$ from random algebraic method in \cite{Bukh2018} is $s^{O(s^{2})}.$ In particular, when $s=3,$ their construction works for $t\approx 10^{56}.$

Secondly, we consider the linear Tur\'{a}n number of Berge theta hypergraph $\Theta_{\ell,t}^{B}.$ Define the \emph{$r$-uniform Berge theta hypergraph} $\Theta_{\ell,t}^{B}$ to be a set of distinct vertices $x,y,v_{1}^{1},\cdots,v_{\ell-1}^{1},\cdots,v_{1}^{t},$ $\cdots,v_{\ell-1}^{t}$ and a set of distinct edges $e_{1}^{1},\cdots,$ $e_{\ell}^{1},\cdots,e_{1}^{t},\cdots,e_{\ell}^{t}$ such that $\{x,v_{1}^{i}\}\subset e_{1}^{i},$ $\{v_{j-1}^{i},v_{j}^{i}\}\subset e_{j}^{i}$ and $\{v_{\ell-1}^{i},y\}\subset e_{\ell}^{i}$ for $1\le i\le t$ and $2\le j\le \ell-1.$ By contrast with the simple graph cases, only a few results are known for hypergraph Tur\'{a}n problems. A hypergraph is called \emph{linear} if every two hyperedges have at most one vertex in common. Given a family of $r$-uniform hypergraphs $\mathcal{F}$, the \emph{linear Tur\'{a}n number} of $\mathcal{F}$, denoted $\text{ex}_{r}^{lin}(n,\mathcal{F})$, is the maximum number of hyperedges in an $\mathcal{F}$-free $r$-uniform linear hypergraph on $n$ vertices. It is easy to see that $\text{ex}_{r}^{lin}(n,\mathcal{F})\le \text{ex}_{r}(n,\mathcal{F})$.

 Recently, He and Tait~\cite{2019He} gave the following upper bound \begin{align*}
\textup{ex}_{r}(n,\Theta_{\ell,t}^{B})\le c_{r,\ell,t}n^{1+\frac{1}{\ell}},
\end{align*}
where $c_{r,\ell,t}$ is a constant depending on $r,\ell,t$. They also showed that
 \begin{align*}
 \textup{ex}_{r}(n,\Theta_{\ell,t}^{B})=\Omega_{\ell,r}(n^{1+\frac{1}{\ell}}),
 \end{align*}
 where $t$ is extremely large.

 It is therefore still quite natural to ask whether for $\ell\ge 3$, there exist
improved constructions that could show that $t(\ell)$ can be chosen to be a reasonable quantity. When $\ell=3,$ we prove the following result via an explicit construction.
\begin{theorem}\label{thetathm}
$\textup{ex}_{3}^{lin}(n,\Theta_{3,217}^{B})=\Theta(n^{\frac{4}{3}})$.
\end{theorem}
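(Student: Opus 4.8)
The plan has two parts. The upper bound is immediate: every linear hypergraph is in particular a hypergraph, so $\textup{ex}_3^{lin}(n,\Theta_{3,217}^B)\le \textup{ex}_3(n,\Theta_{3,217}^B)\le c_{3,3,217}\,n^{4/3}$ by the He--Tait bound quoted above with $r=\ell=3$. Thus the entire content is the matching lower bound, namely an explicit $3$-uniform linear hypergraph $\mathcal H_q$ on $\Theta(q^3)$ vertices with $\Theta(q^4)=\Theta(n^{4/3})$ hyperedges and no copy of $\Theta_{3,217}^B$.

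A reduction to a graph problem will drive the construction. Replacing each hyperedge $\{a,b,c\}$ of a linear $3$-uniform hypergraph $\mathcal H$ by the triangle on $\{a,b,c\}$ produces a simple graph $G$ (its $2$-shadow) whose edge set is partitioned into $|\mathcal H|$ pairwise edge-disjoint triangles, since by linearity two distinct hyperedges cannot share an edge. If $\mathcal H$ contained a Berge copy of $\Theta_{3,t}^B$ with poles $x,y$ and intermediate vertices $v_1^i,v_2^i$, then $G$ would contain the $t$ internally disjoint length-$3$ paths $x\,v_1^{i}\,v_2^{i}\,y$, i.e.\ a $\theta_{3,t}$. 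Hence it suffices to exhibit a linear triple system on $\Theta(q^3)$ points with $\Theta(q^4)$ triples whose $2$-shadow is $\theta_{3,217}$-free --- for instance, one in which every pair of vertices is joined by at most $216$ paths of length $3$. This parallels the construction behind Theorem~\ref{subdthm}, the new feature being that one must now control paths of length $3$ rather than the length-$\le 2$ paths arising in a $1$-subdivision, and it is precisely this that inflates the admissible constant from $25$ to $217$.

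Concretely I would take $V$ to be a suitable affine point set of size $\Theta(q^3)$ over $\mathbb F_q$ and declare an unordered triple $\{u,v,w\}$ from $V$ to be a hyperedge exactly when $(u,v,w)$ satisfies an explicit polynomial system $F_1=\dots=F_k=0$ that is symmetric in $u,v,w$, chosen so that for fixed $u$ the admissible pairs $(v,w)$ form a curve of the prescribed degree --- giving $\Theta(q^4)$ hyperedges in total --- and so that fixing any two vertices of a hyperedge determines the third, which is the linearity requirement. The steps are then: (1) fix the $F_i$ and verify the vertex count, the hyperedge count, and linearity, the last being a B\'ezout/resultant statement that an appropriate one-variable elimination polynomial is not identically zero and admits a unique root on the relevant slice; (2) assume $\mathcal H_q\supseteq\Theta_{3,217}^B$ with poles $x,y$ and set up the polynomial system in the coordinates of the intermediate vertices and hyperedges of one length-$3$ Berge path from $x$ to $y$; (3) apply the multipolynomial resultant to eliminate all but one of these unknowns, producing a univariate polynomial $R_{x,y}$ such that every length-$3$ Berge path from $x$ to $y$ yields a root of $R_{x,y}$ and internally disjoint paths yield distinct roots, and prove the bound $\deg R_{x,y}\le 216$; (4) conclude that for every pair $(x,y)$ outside an exceptional set there are at most $216$ internally disjoint Berge paths between $x$ and $y$, which contradicts $t=217$; (5) show the exceptional set, where $R_{x,y}\equiv 0$, is confined to a proper subvariety and is sparse enough that deleting a negligible set of vertices destroys every remaining copy of $\Theta_{3,217}^B$ while keeping $\Theta(n^{4/3})$ hyperedges.

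The crux, exactly as in Theorem~\ref{subdthm}, is step (3): showing that $R_{x,y}$ is \emph{not} identically zero for generic $(x,y)$ --- equivalently, that the relevant elimination ideal is proper and the successive ``link'' varieties are in general position --- and extracting the precise degree bound that yields the constant $217=216+1$. This is where the explicit choice of the $F_i$ enters and where an honest (in all likelihood computer-assisted) computation of the multipolynomial resultant with the chosen coefficients is needed, together with a finite case analysis of degenerate Berge paths --- two paths sharing a vertex or a hyperedge, an intermediate vertex coinciding with a pole, or a hyperedge incident to both poles --- each of which must be shown either impossible in $\mathcal H_q$ or to contribute only to the exceptional set. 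Once the resultant bound is in hand, the counting in steps (1), (4) and (5) is routine point-counting over $\mathbb F_q$ from the dimensions of the varieties involved.
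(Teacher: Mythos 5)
Your skeleton matches the paper's strategy: He--Tait for the upper bound, an explicit algebraic linear hypergraph whose Berge $3$-paths between fixed endpoints are bounded by iterated resultants, plus a finite degenerate-case analysis. The $2$-shadow reframing is a sound way to state the target (only the direction that a copy of $\Theta_{3,t}^{B}$ in $\mathcal{H}$ forces a $\theta_{3,t}$ in the shadow is needed, and the degenerate possibility of two shadow-edges lying in the same hyperedge can only shrink the count of genuine Berge paths). But the plan omits the structural ideas that actually drive the argument, and $217$ cannot be read off as $\deg R_{x,y}+1$ for a single elimination polynomial. The paper's hypergraph is tripartite, with $V_i=S_1\times S_2\times S_2\times\{i\}$ for $i=1,2,3$ and cyclic hyperedges $e(x_1,x_2,x_3,a)$; Berge $3$-paths between two fixed endpoints therefore split into \emph{types} $(1,2,1,2)$, $(1,2,3,1)$, $(1,2,3,2)$ and their reflections, each producing a different polynomial system and a different resultant chain, with per-type bounds $8$, $108$, $36$ that combine by symmetry to the worst case $2\times 108=216$. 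The $108$ is itself $27\times 4$, and the $27$ arises from a factored degree ($24+3$) rather than a raw B\'ezout count, with a subsidiary resultant argument needed to rule out that the degree-$24$ factor vanishes identically.

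Your step (5) also diverges from what the paper does, and creates an extra obligation. There is no post-hoc deletion of an exceptional set: $S_1=(T_1\cap T_3)\setminus(T_4\cup T_5)$ is pruned \emph{in advance}, removing the finitely many roots of the explicit low-degree polynomials (a quadratic, two linear forms, and a quintic) whose vanishing would force a key resultant to be identically zero; this makes the bound of $216$ hold for \emph{every} pair of vertices and changes the hyperedge count only by a constant factor. A generic-pair-plus-deletion scheme would need an additional argument that the exceptional set is thin and that deletion preserves $\Omega(n^{4/3})$ hyperedges and linearity. In sum, your approach is the right one and consistent with the paper's, but absent a concrete hypergraph, the case split by type, and the verified resultant degrees, the proposal remains an outline of the method rather than a proof of Theorem~\ref{thetathm}.
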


Table~\ref{table1} lists some comparisons between random algebraic constructions and explicit constructions. This paper is organized as follows. In Section~\ref{sectiotech}, we will give some basics about the resultant of polynomials, which are the main techniques employed in this paper. In Section~\ref{sectionsub}, we give a proof of Theorem~\ref{subdthm}. We prove Theorem~\ref{thetathm} in Section~\ref{sectiontheta}. Section~\ref{sectionconclusion} concludes our paper. Most of the computations have been done by MAGMA \cite{BCP97}.
\begin{table}[H]
		\centering
		\begin{tabular}{|l|c|c|}\hline
			Forbidden (hyper-)graph $H$&\multicolumn{2}{c|}{Constant dependence}\\\hline
			&Random algebraic construction& Explicit construction\\
			 $\text{ex}(n,K_{s,t})=\Omega(n^{2-\frac{1}{s}})$&$t\ge s^{4s}$~\cite{Bukh2013IJM}&$t\ge (s-1)!+1$~\cite{ARS99}\\
             $\text{ex}(n,\Theta_{\ell,t})=\Omega(n^{1+\frac{1}{\ell}})$&$t=\ell^{O(\ell^{2})}$~\cite{Conlon2014}&$\ell=4,t\ge 3$~\cite{VW19}\\\
			 $\text{ex}_{r}(n,K_{s_{1},\ldots,s_{r}}^{(r)})=\Omega(n^{r-\frac{1}{s_{1}\cdots s_{r-1}}})$&$s_{r}\ge(\prod\limits_{i=1}^{r-1}s_{i})^{O(\prod\limits_{i=1}^{r-1}s_{i})}$~\cite{2018Ma}&$s_{r}\ge((r-1)(s_{1}\cdots s_{r-1}-1))!+1$~\cite{2020Norm}\\
			 $\text{ex}(n,sub(K_{s,t}))=\Omega(n^{\frac{3}{2}-\frac{1}{2s}})$&$t= s^{O(s^{2})}$~\cite{Bukh2018} &$\bm{s=3}$, $\bm{t\ge 25}$ \\
			$\text{ex}_{r}^{lin}(n,\Theta_{\ell,t}^{B})=\Omega(n^{1+\frac{1}{\ell}})$&$t=\ell^{O(\ell^{2})}$~\cite{Bukh2018}&$\bm{\ell=3,t\ge 217}$\\ \hline
			
		\end{tabular}
		\caption{Constant dependence in random algebraic constructions of Bukh~\cite{Bukh2015} and explicit constructions. }
\label{table1}
	\end{table}

\section{Main technique employed}\label{sectiotech}
Let $q$ be a prime power, and $\mathbb{F}_{q}$ be the finite field of order $q$. Let $G$ be a graph with vertex set $\mathbb{F}_{q}^{t}$ and two vertices $u,v\in\mathbb{F}_{q}^{t}$ are joined if $f_{i}(u,v)=0$ for $i=1,\dots,s$, where $f_{i}\in\mathbb{F}_{q}[x_{1},\dots,x_{2t}]$.
In order to show that $G$ does not contain certain structure, we first fix several vertices, then count the number of certain substructures containing these vertices (for example, consider theta graph $\theta_{k,t}$, we need to show that there are at most $t-1$ paths of length $k$ between any two fixed vertices, and for complete bipartite graph $K_{s,t}$, we need to show that there are at most $t-1$ common neighbours among any $s$ vertices). From these conditions, we can get a series of equations
\[F_{1}(x_{1},\dots,x_{m})=\cdots=F_{m}(x_{1},\dots,x_{m})=0,\]
(in general, there should be $m$ equations with $m$ variables), where $F_{j}$ ($j=1,\dots,m$) come from $f_{i}$ ($i=1,\dots,s$). If the construction works, then there are finitely many solutions for above equations, i.e., $\dim(W)=0$, where
\[W=\{x\in\mathbb{F}_{q}^{m}:\ F_{1}(x)=\cdots=F_{m}(x)=0\}.\]
The following lemma says that the size of $W$ is bounded by the product of degrees of $F_{i}$.
\begin{theorem}[B\'{e}zout's theorem]
Let $F_{i}\in\mathbb{F}_{q}[x_{1},\dots,x_{m}]$ for $i=1,\dots,m$. If the variety $W=\{x\in\mathbb{F}_{q}^{m}:\ F_{1}(x)=\cdots=F_{m}(x)=0\}$ has $\dim(W)=0$, then
\[|W|\le\prod_{i=1}^{m}\deg(F_{i}).\]
\end{theorem}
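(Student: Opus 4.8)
The statement as phrased is the affine form of B\'ezout's theorem over a possibly non-closed field, and the plan is to reduce it to the classical projective B\'ezout \emph{inequality}. First, since $W$ consists only of $\mathbb{F}_q$-rational points we have $|W|\le|\widehat{W}|$, where $\widehat{W}=\{x\in\overline{\mathbb{F}_q}^{\,m}:F_1(x)=\dots=F_m(x)=0\}$ and $\overline{\mathbb{F}_q}$ is an algebraic closure; the hypothesis $\dim W=0$ is to be read as $\dim\widehat{W}=0$, so that $\widehat{W}$ is a finite set of reduced closed points and it suffices to prove $|\widehat{W}|\le\prod_{i=1}^m\deg(F_i)$. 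From here we work over $K:=\overline{\mathbb{F}_q}$, and we let the \emph{degree} of an affine variety mean the degree of its projective closure in $\mathbb{P}^m$, so that a single reduced point has degree $1$ and a finite reduced set of points has degree equal to its cardinality. The key claim to be proven is: for any $F_1,\dots,F_\ell\in K[x_1,\dots,x_m]$, the irreducible components of $V(F_1,\dots,F_\ell)\subseteq\mathbb{A}^m$ have total degree at most $\prod_{i=1}^\ell\deg(F_i)$.

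\textbf{Proof of the key claim and conclusion.} I would prove the key claim by induction on $\ell$. For $\ell=1$, factoring $F_1=c\prod_i g_i^{e_i}$ into irreducibles gives components $V(g_i)$ with $\sum_i\deg(g_i)\le\sum_i e_i\deg(g_i)=\deg(F_1)$. For the inductive step, let $Y_1,\dots,Y_r$ be the irreducible components of $V(F_1,\dots,F_{\ell-1})$, so that $\sum_j\deg(Y_j)\le\prod_{i=1}^{\ell-1}\deg(F_i)$ by induction, and write $V(F_1,\dots,F_\ell)=\bigcup_j\bigl(Y_j\cap V(F_\ell)\bigr)$. If $Y_j\subseteq V(F_\ell)$ the intersection is $Y_j$ itself; if $Y_j\not\subseteq V(F_\ell)$, then (by Krull's principal ideal theorem, applied when $\dim Y_j\ge1$) every component of $Y_j\cap V(F_\ell)$ has dimension $\dim Y_j-1$, and I claim their total degree is at most $\deg(Y_j)\cdot\deg(F_\ell)$. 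Granting this sub-lemma, the components of $V(F_1,\dots,F_\ell)$ are found among these pieces, so their total degree is at most $\sum_{Y_j\subseteq V(F_\ell)}\deg(Y_j)+\sum_{Y_j\not\subseteq V(F_\ell)}\deg(Y_j)\deg(F_\ell)\le\deg(F_\ell)\sum_j\deg(Y_j)\le\prod_{i=1}^\ell\deg(F_i)$, using $\deg(F_\ell)\ge1$. Applying the key claim with $\ell=m$ and recalling that $\dim\widehat{W}=0$ forces every component of $\widehat{W}=V(F_1,\dots,F_m)$ to be a single reduced point of degree $1$, we find that $|\widehat{W}|$, being the number of these components, is at most their total degree, hence at most $\prod_{i=1}^m\deg(F_i)$; combined with the reduction this gives $|W|\le\prod_{i=1}^m\deg(F_i)$.

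\textbf{The main obstacle.} The real content is the sub-lemma invoked above: if $V\subseteq\mathbb{A}^m$ is irreducible of dimension at least $1$ and $V\not\subseteq V(f)$, then the components of $V\cap V(f)$ have total degree at most $\deg(V)\cdot\deg(f)$. I would establish it with Hilbert-function machinery. First reduce to the case $\deg(f)=1$ via the degree-$\deg(f)$ Veronese embedding $\mathbb{P}^m\hookrightarrow\mathbb{P}^N$, under which $V(f)$ pulls back to a hyperplane section and every degree is scaled by the factor $\deg(f)$. Then, writing $R$ for the homogeneous coordinate ring of the projective closure $\overline{V}$ (a domain, since $\overline{V}$ is irreducible) and $h$ for the linear form cutting out the hyperplane $H$, the short exact sequence $0\to R(-1)\xrightarrow{\,\cdot h\,}R\to R/(h)\to0$ shows the Hilbert polynomial of $R/(h)$ to be the first difference of that of $R$, whence the scheme-theoretic section $\overline{V}\cap H$ has degree exactly $\deg(\overline{V})$; since each reduced component occurs with multiplicity at least $1$, their total degree is at most $\deg(\overline{V})=\deg(V)$, and undoing the Veronese scaling yields the bound $\deg(V)\cdot\deg(f)$. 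All of this rests on the basic theory of Hilbert functions of graded algebras and of the degree of a projective variety (non-negativity and additivity of leading Hilbert coefficients, behaviour of the degree under the Veronese map), which I would either develop in a short preliminary subsection or import from a standard reference on commutative algebra or algebraic geometry; everything else in the argument is elementary bookkeeping with irreducible decompositions.
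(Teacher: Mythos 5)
The paper itself does not prove this statement: it is quoted as the classical B\'ezout inequality and used as a black box, so your argument can only be compared with the standard proof, which is indeed the route you take. Your reading of the hypothesis is the correct one (as literally stated, $W$ is a finite set of $\mathbb{F}_q$-points and the dimension condition would be vacuous, so $\dim W=0$ must refer to the zero locus over $\overline{\mathbb{F}_q}$, which is how the paper uses it), and your overall plan --- induction on the number of hypersurfaces, reducing everything to the sub-lemma that for irreducible $V\not\subseteq V(f)$ the components of $V\cap V(f)$ have total degree at most $\deg(V)\deg(f)$, proved by a hyperplane-section/Hilbert-polynomial argument --- is exactly the standard proof of the refined B\'ezout inequality and is structurally sound (modulo the routine check, which you elide, that the closures of the affine components of $V\cap V(f)$ occur among the components of $\overline{V}\cap V(f^{h})$, $f^{h}$ the homogenization, so that the affine statement follows from the projective one).

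The one genuine error is the Veronese scaling claim. Under the $d$-uple Veronese embedding with $d=\deg f$, the degree of a $k$-dimensional subvariety $X$ is multiplied by $d^{k}$, not by $d$: one has $\deg v_d(X)=d^{\dim X}\deg X$, since the Hilbert polynomial of $v_d(X)$ is $P_X(dt)$. As literally written, your bookkeeping does not yield the bound $\deg(V)\deg(f)$: if every degree were scaled by the flat factor $d$, then ``undoing the scaling'' after the hyperplane-section lemma would give total degree of the components of $V\cap V(f)$ at most $\deg(V)$, which is false (a line in $\mathbb{P}^2$ meets a degree-$d$ curve in up to $d$ distinct points). With the correct scaling the argument does close: the hyperplane-section lemma bounds the total degree of the components of $v_d(\overline{V})\cap H$ by $\deg v_d(\overline{V})=d^{k}\deg V$ with $k=\dim V$, each $(k-1)$-dimensional component of $\overline{V}\cap V(f^{h})$ has its degree multiplied by $d^{k-1}$ under $v_d$, and dividing gives the desired bound $d\deg V$. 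So the final inequalities you state are right, but that sentence needs to be corrected for the powers of $d$ to balance; the remainder of the outline (the $\ell=1$ base case, Krull for the dimension drop, additivity and positivity of multiplicities in the scheme-theoretic hyperplane section) is the standard argument and is fine.
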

In general, B\'{e}zout's theorem does not give the best bound for the size of $W$. On the other hand, it is not easy to compute the dimension of $W$. But for some specific cases, we can determine the size of $W$ by eliminating variables directly.

When the forbidden graph is simple, for example, $\theta_{4,t}$ in Theorem~\ref{vw19}, we get Eqs. (\ref{vweq1})-(\ref{vweq3}), the degrees of polynomials are 2, 3 and 3. Hence this case can be done by manual calculation.
But when the forbidden graph is a bit complicated, say $\Theta_{3,t}^{B}$ in Section~\ref{sectiontheta}, we get Eqs. (\ref{thetaequation1})-(\ref{thetaequation2}), and the degrees of these polynomials are 2, 3, 2, 3, 2, 3, 2 and 3. After eliminating 7 variables, the final polynomial has degree 40 (see Eq. (\ref{thetaeq13})), this is almost impossible to get it by manual computation. Similarly for the case $K_{3,t}'$. Hence we introduce the polynomial resultant method, which can settle such cases by computer.

The polynomial resultant method has been used in \cite{GJXZ20} for Ramsey problems. For the convenience of readers, we recall some basics about the resultant of polynomials. Let $\mathbb{F}$ be a field, and $\mathbb{F}[x]$ be the polynomial ring with coefficients in $\mathbb{F}$.
\begin{definition}
Let $f(x),g(x)\in\mathbb{F}[x]$ with $f(x)=a_{m}x^{m}+\cdots+a_{1}x+a_{0}$ and $g(x)=b_{n}x^{n}+\cdots+b_{1}x+b_{0}$, then the resultant of $f$ and $g$ is defined by the determinant of the following $(m+n)\times (m+n)$ matrix,
\[
\begin{array}{c@{\hspace{-5pt}}l}
\left(
            \begin{array}{cccccccc}
              a_{0} &       &         &        & b_{0} &       &       &   \\
              a_{1} & a_{0} &         &        & b_{1} &  b_{0}&       &   \\
              a_{2} &  a_{1}& \ddots &         & b_{2} &  b_{1}& \ddots &  \\
              \vdots& a_{2} &  \ddots&   a_{0} & \vdots& b_{2} & \ddots & b_{0}\\
              a_{m} & \vdots& \ddots &a_{1}    & b_{n} & \vdots& \ddots & b_{1}  \\
                    & a_{m} & \ddots & a_{2}   &       & b_{n} &  \ddots & b_{2}  \\
                    &       &  \ddots&  \vdots &       &       & \ddots&  \vdots \\
              &       &         &  a_{m}&       &       &        & b_{n}\\
            \end{array}
\right)
          \\[-5pt]
          \begin{array}{cc}\underbrace{\rule{31mm}{0mm}}_n&
          \underbrace{\rule{31mm}{0mm}}_m\end{array}&
          \end{array},
\]
which is denoted by $R(f,g)$.
\end{definition}
The resultant of two polynomials has the following property.
\begin{lemma}\cite[Theorem 3.18]{Fuhrmann2012}\label{sublemma1}
If $\text{gcd}(f(x),g(x))=h(x)$, where $\text{deg}(h(x))\ge1$, then $R(f,g)=0$. In particular, if $f$ and $g$ have a common root in $\mathbb{F}$, then $R(f,g)=0$.
\end{lemma}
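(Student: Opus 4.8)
The plan is to prove the one implication the lemma actually asserts — that a non-constant common divisor forces $R(f,g)=0$ — by converting the divisibility hypothesis into a non-trivial solution of a homogeneous linear system whose coefficient matrix is exactly the Sylvester matrix in the definition. First I would write $f=f_{1}h$ and $g=g_{1}h$ with $f_{1},g_{1}\in\mathbb{F}[x]$, so that $\deg f_{1}=m-\deg h\le m-1$ and $\deg g_{1}=n-\deg h\le n-1$. Multiplying crosswise, $g_{1}f=g_{1}f_{1}h=f_{1}g$, which is a genuine polynomial identity $g_{1}f-f_{1}g=0$. Setting $u:=g_{1}$ and $v:=-f_{1}$, I obtain polynomials, not both zero (indeed $u=g_{1}\ne 0$ since $g\ne 0$), with $\deg u\le n-1$, $\deg v\le m-1$, and $uf+vg=0$.

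The second step is the linear-algebra translation. Writing $u=\sum_{i=0}^{n-1}u_{i}x^{i}$ and $v=\sum_{j=0}^{m-1}v_{j}x^{j}$, the identity $uf+vg=0$ is equivalent to the vanishing of the coefficient of $x^{k}$ for each $k=0,1,\dots,m+n-1$, and each such coefficient is a linear form in the $m+n$ unknowns $(u_{0},\dots,u_{n-1},v_{0},\dots,v_{m-1})$; this gives a homogeneous system of $m+n$ equations in $m+n$ unknowns. Reading off the coefficient of $x^{k}$ column by column, the $n$ columns attached to the $u_{i}$ are the successive downward shifts of $(a_{0},\dots,a_{m})$ and the $m$ columns attached to the $v_{j}$ are the successive downward shifts of $(b_{0},\dots,b_{n})$ — that is, the coefficient matrix is precisely the $(m+n)\times(m+n)$ matrix displayed in the definition of $R(f,g)$. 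Since $(u,v)\ne 0$ solves it, that matrix is singular, so $R(f,g)=0$. For the ``in particular'' clause, a common root $\alpha\in\mathbb{F}$ of $f$ and $g$ makes $(x-\alpha)$ divide both, hence divide $h=\gcd(f,g)$, so $\deg h\ge 1$ and the first part applies; the same works for a common root in $\overline{\mathbb{F}}$, since the degree of $\gcd(f,g)$ is unchanged under field extension by the Euclidean algorithm.

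This argument is classical and I expect no serious obstacle; the one point deserving care is the bookkeeping in the second step — checking that the matrix of $uf+vg=0$, written out, is \emph{literally} the Sylvester matrix as drawn (same block structure and shifts), not its transpose or a row/column permutation of it, since $R(f,g)$ is by definition the determinant of that specific matrix. One should also keep in mind the standing assumptions $a_{m}\ne 0$ and $b_{n}\ne 0$ implicit in calling $m,n$ the degrees of $f,g$: these are what make the degree bounds $\deg u\le n-1$, $\deg v\le m-1$ and the size $(m+n)\times(m+n)$ of the matrix fit together correctly.
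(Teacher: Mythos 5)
The paper does not prove this lemma; it simply cites Fuhrmann's textbook, so there is no in-paper argument to compare your proposal against. Your proof is the standard classical one and it is correct: from $f=f_{1}h$, $g=g_{1}h$ you produce the nontrivial relation $g_{1}f-f_{1}g=0$ with $\deg g_{1}\le n-1$, $\deg f_{1}\le m-1$, translate it into a nonzero kernel vector of the $(m+n)\times(m+n)$ coefficient matrix, and check that this matrix is exactly the one displayed in the paper's definition (the $k$-th equation, the coefficient of $x^{k}$, reads off row $k$, and each unknown $u_{i}$ or $v_{j}$ contributes a downward-shifted copy of the $a$- or $b$-vector as a column). Your closing remarks about $a_{m},b_{n}\ne 0$ and about a common root in $\overline{\mathbb{F}}$ (the latter being slightly stronger than what the lemma states) are both sound. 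No gaps.
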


When we consider multivariable polynomials, we can define the resultant similarly, and the above lemma still holds when we fix one variable. For any $f,g\in\mathbb{F}[x_{1},\dots,x_{n}]$, let $R(f,g;x_{i})$ denote the resultant of $f$ and $g$ with respect to $x_{i}$, then we have $R(f,g;x_{i})\in\mathbb{F}[x_{1},\dots,x_{i-1},x_{i+1},x_{n}]$.
\begin{example}
We consider the following equations
\begin{align*}
&f=xy-1=0,\\
&g=x^{2}+y^{2}-4=0.
\end{align*}
Regarding $f$ and $g$ as polynomials in $x$ whose coefficients are polynomials in $y$, then we can compute to get that
\[R(f,g;x)=\text{det}\left(
                       \begin{array}{ccc}
                         y & 0 & 1 \\
                         -1 & y & 0 \\
                         0 & -1 & y^{2}-4 \\
                       \end{array}
                     \right)=y^{4}-4y^{2}+1.
\]
By Lemma~\ref{sublemma1}, it follows that $y^{4}-4y^{2}+1$ vanishes at any common solutions of $f=g=0$. Hence, by solving $y^{4}-4y^{2}+1=0$, we can find the $y$-coordinates of the solutions.
\end{example}

As an example, we give a new proof of the result stated in  the following theorem, which was originally proved by Verstra\"{e}te and Williford \cite{VW19}. Recall that a theta graph $\theta_{k,t}$ is a graph made of $t$ internally disjoint paths of length $k$ connecting two endpoints.
\begin{theorem}\cite{VW19}\label{vw19}
$\textup{ex}(n,\theta_{4,3})=\Omega(n^{\frac{5}{4}})$.
\end{theorem}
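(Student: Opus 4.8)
The plan is to give an explicit algebraic construction over a finite field $\mathbb{F}_q$ whose incidence relation is governed by low-degree polynomials, and then use the resultant machinery to certify that the forbidden theta graph $\theta_{4,3}$ does not appear. Concretely, I would take the vertex set to be (two copies of) $\mathbb{F}_q^2$, or perhaps $\mathbb{F}_q^2$ itself with a symmetric adjacency rule, and declare $u = (u_1,u_2)$ adjacent to $v = (v_1,v_2)$ when a pair of equations of the shape $u_2 + v_2 = u_1 v_1$ and $u_2 v_2 = u_1 + v_1$ (or a similar norm/trace-flavoured system, degrees $2$ and $3$ as flagged in the excerpt right before Eqs.~(\ref{vweq1})--(\ref{vweq3})) is satisfied. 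A routine double-count shows such a graph on $n \asymp q^2$ vertices has $\Omega(n \cdot q) = \Omega(n^{5/4})$ edges, provided the adjacency system has the ``right'' number of solutions for a generic pair; so the entire content is the upper-bound side, namely that no two vertices $a$ and $b$ are joined by $3$ internally disjoint paths of length $4$.

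The key steps, in order, would be as follows. First, fix two distinct vertices $a$ and $b$; a path of length $4$ from $a$ to $b$ is a triple of intermediate vertices $(x,y,z)$ with $a \sim x \sim y \sim z \sim b$. Writing out the adjacency equations for each of these four edges gives a polynomial system in the coordinates of $x,y,z$ (that is how Eqs.~(\ref{vweq1})--(\ref{vweq3}) of degrees $2,3,3$ arise after the symmetric combinations are taken). Second, I would eliminate the coordinates of $x$ and $z$ using the adjacency relations to $a$ and to $b$ respectively, leaving a smaller system purely in the coordinates of the ``middle'' vertex $y$. Third — the crucial elimination — I would compute the resultant $R(F_1, F_2; \cdot)$ of the two remaining bivariate polynomials with respect to one variable of $y$, obtaining a single univariate polynomial $p(t)$ whose roots contain the relevant coordinate of every length-$4$ path from $a$ to $b$. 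Fourth, I would bound $\deg p$ (Bézout/structure of the resultant gives a bound; here it stays small because the degrees are $2$ and $3$), and then argue that each root of $p$ lifts back to at most a bounded number of actual paths and, crucially, to at most a bounded number of \emph{internally disjoint} ones — so the total count of pairwise internally disjoint $a$–$b$ paths of length $4$ is at most $2$, hence $\theta_{4,3}$-free. One also has to dispose of the degenerate cases where $p \equiv 0$, i.e. show the relevant variety genuinely has dimension $0$ for every admissible choice of $a,b$; this is where one invokes the explicit form of the defining equations rather than a black-box argument.

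The main obstacle I anticipate is precisely this last point: ensuring that the elimination does not collapse — that for every pair $a \ne b$ the resulting system is zero-dimensional and the resultant polynomial is not identically zero — and, relatedly, choosing the defining equations so that the degenerate locus (pairs $a,b$, or partial paths, for which some leading coefficient vanishes and the resultant loses information) is small enough to be absorbed into lower-order error terms in the edge count. A secondary but real difficulty is the bookkeeping that converts ``at most $N$ length-$4$ walks'' into ``at most $2$ internally disjoint length-$4$ paths'': walks that share an internal vertex, or that revisit $a$ or $b$, must be excluded, and degenerate walks (with a repeated vertex) have to be handled separately. Once the defining polynomials are pinned down, though, the hard inequalities reduce to a finite resultant computation of a degree-$\le 6$-ish univariate polynomial, which — unlike the $\Theta_{3,t}^B$ and $K_{3,t}'$ cases where the final degree balloons to $40$ — is small enough to carry out by hand, exactly as the excerpt promises.
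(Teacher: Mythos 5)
Your proposal has a genuine gap at the very first step, and a second, more structural one in the elimination strategy.

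\textbf{The construction is mis-sized.} You take the vertex set to be $\mathbb{F}_q^2$ (or two copies of it), so $n \asymp q^2$. If the adjacency rule imposed two polynomial equations on $(v_1,v_2)$ for fixed $u$, the degree of each vertex would be $O(1)$ (two equations, two unknowns), giving only $\Theta(n)$ edges. If instead you wanted degree $\Theta(q)$, you would need one free coordinate of $v$, hence only one equation, and then $\Theta(q^3) = \Theta(n^{3/2})$ edges, which cannot be $\theta_{4,3}$-free since $\mathrm{ex}(n,\theta_{4,3})=O(n^{5/4})$. Your arithmetic ``$\Omega(n\cdot q)=\Omega(n^{5/4})$'' only works if $n\asymp q^4$. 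Indeed the paper takes $V=\mathbb{F}_q^4$ with \emph{three} defining equations, so that one coordinate of a neighbour is free, degree is $\Theta(q)$, and edges are $\Theta(q^5)=\Theta(n^{5/4})$. With only $\mathbb{F}_q^2$ the whole bound is off the table before any elimination begins.

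\textbf{The elimination strategy is not the one that closes the argument.} You propose to eliminate the coordinates of $x$ and $z$, reduce to the coordinates of the single middle vertex $y$, bound the degree of a univariate resultant, and hence bound the number of length-$4$ walks $a$--$b$. Even if that count is bounded by a constant $C$, the step from ``at most $C$ walks'' to ``at most $2$ internally disjoint paths'' is far from automatic unless $C\le 2$, which is much stronger than what a Bézout count of a degree-$2,3,3$ system gives. The paper does something different in kind: it writes one polynomial $f_i$ per ``coordinate slot'' $(i=1,2,3)$ of the adjacency system, each involving the first coordinates of \emph{all eight} vertices of the octagon formed by two of the three paths, computes $R(f_1,f_2;u_1)$ and $R(f_1,f_3;u_1)$ and then $R(g_1,g_2;x_1)$, factors out terms that are nonzero by the common-neighbour observation, and deduces a \emph{structural linear constraint} $a_1+b_1=v_1+y_1$ on the two middle vertices. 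Applied to all three octagons inside a putative $\theta_{4,3}$, this forces the three middle first-coordinates to coincide and, after one more substitution, forces $u_1=x_1$, contradicting the fact that $u$ and $x$ share the common neighbour $a$. No walk-counting, no handling of degenerate walks, no lifting-back estimate. Your plan would still need an argument for why at most two of the (possibly many) length-$4$ paths can be pairwise internally disjoint, and that is precisely the content the paper's linear constraint supplies directly.

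In short: fix the ambient space to $\mathbb{F}_q^4$ with a three-equation adjacency system, and replace the ``bound the number of walks by a resultant degree'' plan with the derivation of a linear relation among first coordinates of the middle vertices of each octagon, followed by a contradiction from the three-octagon structure of $\theta_{4,3}$.
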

\begin{proof}
Let $q$ be an odd prime power. The graph $G_{q}$ is defined on the vertex set $V=\mathbb{F}_{q}^{4}$ such that  $u=(u_{1},u_{2},u_{3},u_{4})\in V$ is joined to $v=(v_{1},v_{2},v_{3},v_{4})\in V$ if and only if $u\ne v$ and
\begin{align*}
&u_{2}+v_{2}=u_{1}v_{1},\\
&u_{3}+v_{4}=u_{1}v_{1}^{2},\\
&u_{4}+v_{3}=u_{1}^{2}v_{1}.
\end{align*}
From above equations, it is easy to see that if $r,s\in V$ are distinct and have a common neighbour, then $r_{1}\ne s_{1}$.
It can be computed to get that $G_{q}$ has $n:=q^{4}$ vertices and $\Omega(n^{\frac{5}{4}})$ edges.

Suppose that $G_{q}$ contains a copy of $\theta_{4,3}$ with edges $\{au,uv,vw,wb,ax,xy,yz,zb,ad,de,ef,fb\}$. We first consider the octagon with edge set $\{au,uv,vw,wb,ax,xy,yz,zb\}$. By the definition of $G_{q}$, we have
\begin{align*}
&a_{2}+u_{2}=a_{1}u_{1}, && a_{3}+u_{4}=a_{1}u_{1}^{2}, && a_{4}+u_{3}=a_{1}^{2}u_{1},\\
&u_{2}+v_{2}=v_{1}u_{1}, && v_{3}+u_{4}=v_{1}u_{1}^{2}, && v_{4}+u_{3}=v_{1}^{2}u_{1},\\
&w_{2}+v_{2}=w_{1}v_{1}, && v_{3}+w_{4}=v_{1}w_{1}^{2}, && v_{4}+w_{3}=v_{1}^{2}w_{1},\\
&w_{2}+b_{2}=w_{1}b_{1}, && b_{3}+w_{4}=b_{1}w_{1}^{2}, && b_{4}+w_{3}=b_{1}^{2}w_{1},\\
&z_{2}+b_{2}=z_{1}b_{1}, && b_{3}+z_{4}=b_{1}z_{1}^{2}, && b_{4}+z_{3}=b_{1}^{2}z_{1},\\
&z_{2}+y_{2}=y_{1}z_{1}, && y_{3}+z_{4}=y_{1}z_{1}^{2}, && y_{4}+z_{3}=y_{1}^{2}z_{1},\\
&x_{2}+y_{2}=y_{1}x_{1}, && y_{3}+x_{4}=y_{1}x_{1}^{2}, && y_{4}+x_{3}=y_{1}^{2}x_{1},\\
&x_{2}+a_{2}=x_{1}a_{1}, && a_{3}+x_{4}=a_{1}x_{1}^{2}, && a_{4}+x_{3}=a_{1}^{2}x_{1}.
\end{align*}
From the left eight equations (the middle eight equations, the right eight equations, resp.), we can get the following equations.
\begin{align}
f_{1}:=a_{1}u_{1}-u_{1}v_{1}+v_{1}w_{1}-w_{1}b_{1}+b_{1}z_{1}-z_{1}y_{1}+y_{1}x_{1}-x_{1}a_{1}=0,\label{vweq1}\\
f_{2}:=a_{1}u_{1}^{2}-u_{1}^{2}v_{1}+v_{1}w_{1}^{2}-w_{1}^{2}b_{1}+b_{1}z_{1}^{2}-z_{1}^{2}y_{1}+y_{1}x_{1}^{2}-x_{1}^{2}a_{1}=0,\label{vweq2}\\
f_{3}:=a_{1}^{2}u_{1}-u_{1}v_{1}^{2}+v_{1}^{2}w_{1}-w_{1}b_{1}^{2}+b_{1}^{2}z_{1}-z_{1}y_{1}^{2}+y_{1}^{2}x_{1}-x_{1}a_{1}^{2}=0.\label{vweq3}
\end{align}

Regarding $f_{i}$ as polynomials with variable $u_{1}$, we can compute to get that
\begin{align*}
&R(f_{1},f_{2};u_{1})=g_{1}(a_{1}-v_{1}),\\
&R(f_{1},f_{3};u_{1})=g_{2}(a_{1}-v_{1}),
\end{align*}
where
\begin{align*}
g_{1}=&-a_{1}b_{1}w_{1}^2 + 2a_{1}b_{1}w_{1}x_{1} - 2a_{1}b_{1}x_{1}z_{1} + a_{1}b_{1}z_{1}^2 + a_{1}v_{1}w_{1}^2 - 2a_{1}v_{1}w_{1}x_{1} + a_{1}v_{1}x_{1}^2 - a_{1}x_{1}^2y_{1} + \\
&2a_{1}x_{1}y_{1}z_{1} - a_{1}y_{1}z_{1}^2 + b_{1}^2w_{1}^2 -
    2b_{1}^2w_{1}z_{1} + b_{1}^2z_{1}^2 - b_{1}v_{1}w_{1}^2 + 2b_{1}v_{1}w_{1}z_{1} - b_{1}v_{1}z_{1}^2 - 2b_{1}w_{1}x_{1}y_{1} + \\&2b_{1}w_{1}y_{1}z_{1} + 2b_{1}x_{1}y_{1}z_{1} - 2b_{1}y_{1}z_{1}^2 + 2v_{1}w_{1}x_{1}y_{1} -
    2v_{1}w_{1}y_{1}z_{1} - v_{1}x_{1}^2y_{1} + v_{1}y_{1}z_{1}^2 + x_{1}^2y_{1}^2 -\\& 2x_{1}y_{1}^2z_{1} + y_{1}^2z_{1}^2,\\
g_{2}=&a_{1}b_{1}w_{1} - a_{1}b_{1}z_{1} - a_{1}v_{1}w_{1} + a_{1}v_{1}x_{1} - a_{1}x_{1}y_{1} + a_{1}y_{1}z_{1} - b_{1}^2w_{1} + b_{1}^2z_{1} + b_{1}v_{1}w_{1} - b_{1}v_{1}z_{1} -\\& v_{1}x_{1}y_{1} + v_{1}y_{1}z_{1} + x_{1}y_{1}^2 - y_{1}^2z_{1}.
\end{align*}
Since $a$ and $v$ have a common neighbour, then $a_{1}\ne v_{1}$. Hence $g_{1}=g_{2}=0$.
We can regard $g_{1}$ and $g_{2}$ as polynomials with variable $x_{1}$, then we have
\begin{align*}
R(g_{1},g_{2};x_{1})=(b_{1}-y_{1})(w_{1}-z_{1})^{2}(v_{1}-y_{1})(v_{1}-b_{1})(a_{1}-y_{1})(a_{1}-b_{1})(a_{1}-v_{1}+b_{1}-y_{1}).
\end{align*}
Note that $b$ and $y$ ($w$ and $z$, $v$ and $b$, $a$ and $y$, resp.) have a common neighbour, thus we have either $a_{1}=b_{1}$, or $v_{1}=y_{1}$, or $a_{1}+b_{1}=v_{1}+y_{1}$.

If $a_{1}=b_{1}$, substituting $b_{1}$ by $a_{1}$ into $f_{1}$ and $f_{2}$, we obtain that $R(f_{1},f_{2};u_{1})=(z_{1}-x_{1})(v_{1}-y_{1})(a_{1}-y_{1})(a_{1}-v_{1})$. Since $x$ and $z$ ($a$ and $y$, $a$ and $v$, resp.) have a common neighbour, then we must have $v_{1}=y_{1}$. Substituting $a_{1}=b_{1}$ and $v_{1}=y_{1}$ into $f_{1}$ and $f_{3}$, then we get $R(f_{1},f_{3};u_{1})=(z_{1}-x_{1})(w_{1}-z_{1})(a_{1}-v_{1})^{3}=0$, which contradicts the fact that $z$ and $x$ ($w$ and $z$, $a$ and $v$, resp.) have a common neighbour. Hence $a_{1}\ne b_{1}$. Similarly, $v_{1}\ne y_{1}$.

Therefore, we have
\begin{align*}a_{1}+b_{1}=v_{1}+y_{1}.\end{align*}
 By the symmetry of octagon, we also have
 \begin{align}
 u_{1}+z_{1}=x_{1}+w_{1}.\label{eq100}
  \end{align}
  Since there are three octagons $\{au,uv,vw,wb,ax,xy,yz,zb\}$, $\{au,uv,vw,wb,ad,de,ef,fb\}$, $\{ax,xy,yz,\\zb,ad,de,ef,fb\}$ in $\theta_{4,3}$, we have $a_{1}+b_{1}=v_{1}+y_{1}=v_{1}+e_{1}=y_{1}+e_{1}$. Hence $v_{1}=y_{1}=e_{1}$ and $a_{1}+b_{1}=2v_{1}$. Substituting these equations into $f_{1}$,
we have $(a_{1}-v_{1})(u_{1}+w_{1}-x_{1}-z_{1})=0.$
Since $a,v$ have a common neighbour, then $a_{1}\ne v_{1}$. Hence we have $u_{1}+w_{1}=x_{1}+z_{1}$. Combining with Eq.~(\ref{eq100}), we get that $u_{1}=x_{1}$, contradicts the fact that $u,x$ have a common neighbour $a$.
\end{proof}

\section{Constructions of $K_{3,25}'$-free graphs}\label{sectionsub}
In this section, we construct a $K_{3,25}'$-free graph with $n$ vertices and $\Omega(n^{\frac{4}{3}})$ edges.
Let $\mathbb{F}_{p}$ be a finite field, where $p$ is an odd prime with $p\equiv5\pmod{6}$ and $p>23$. Let
\[S=\bigg\{x: x\in \bigg\{1,2,\dots,\frac{p-5}{6}\bigg\}\text{ and }x\equiv1\pmod{3}\bigg\}\subseteq\mathbb{F}_{p}.\]
Then we have the following lemma.
\begin{lemma}\label{subdlemma3}
For any $x,y,z,t\in S$, we have $x+y\ne0$, $x+y+z+t\ne0$, $x+5y\ne0$, and $x^{2}+xy+y^{2}\ne0$.
\end{lemma}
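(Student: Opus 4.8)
The plan is to verify each of the four non-vanishing claims directly from the explicit description of $S$. The set $S$ consists of those integers $x$ with $1\le x\le \frac{p-5}{6}$ and $x\equiv 1\pmod 3$, viewed as elements of $\mathbb{F}_p$. The crucial quantitative feature is that every element of $S$ lies in the interval $[1,\frac{p-5}{6}]$, so any sum of at most four elements of $S$ lies strictly between $0$ and $p$ when regarded as an integer, and hence cannot be $0$ in $\mathbb{F}_p$ unless it is the empty sum. This immediately handles $x+y\ne 0$ (the integer $x+y$ satisfies $2\le x+y\le \frac{p-5}{3}<p$) and $x+y+z+t\ne 0$ (the integer lies in $[4,\frac{2(p-5)}{3}]\subset(0,p)$). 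For $x+5y\ne 0$, note $1\le x+5y\le 6\cdot\frac{p-5}{6}=p-5<p$, so again the integer is a nonzero residue.

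The last inequality $x^2+xy+y^2\ne 0$ is the one requiring the congruence conditions on $p$, and it is where I expect the only genuine subtlety. First I would use the standard identity $4(x^2+xy+y^2)=(2x+y)^2+3y^2$: since $p$ is odd, $x^2+xy+y^2=0$ in $\mathbb{F}_p$ iff $(2x+y)^2=-3y^2$. As $y\in S$ we have $y\ne 0$ in $\mathbb{F}_p$ (by the interval bound), so this forces $-3$ to be a quadratic residue mod $p$. By quadratic reciprocity, $-3$ is a QR modulo $p$ iff $p\equiv 1\pmod 3$; the hypothesis $p\equiv 5\pmod 6$ gives $p\equiv 2\pmod 3$, a contradiction. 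Hence $x^2+xy+y^2\ne 0$. (Here the role of $p\equiv 5\pmod 6$ rather than merely $p\equiv 2\pmod 3$ is just to also record that $p$ is odd; the condition $p>11$ ensures $S$ is nonempty and the interval bounds above are meaningful, i.e. $\frac{p-5}{6}\ge 1$.)

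An alternative, slightly more elementary route for the last part avoids reciprocity: $x^2+xy+y^2$ is (up to a unit) the norm form of $\mathbb{Z}[\omega]$ where $\omega$ is a primitive cube root of unity, so $x^2+xy+y^2\equiv 0\pmod p$ with $x,y$ not both $\equiv 0$ would make $X^2+X+1$ split mod $p$, i.e. $\mathbb{F}_p$ contains a primitive cube root of unity, which happens iff $3\mid p-1$. Since $p\equiv 2\pmod 3$ this cannot occur. Either way, the main obstacle is purely this residue-symbol computation; the other three inequalities are immediate from the size of the interval defining $S$, and I would present them in one or two lines each before spending the bulk of the argument on $x^2+xy+y^2$.
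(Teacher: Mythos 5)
Your proposal is correct and follows essentially the same route as the paper: the first three non-vanishings come from the observation that each sum, viewed as an integer, lies strictly between $0$ and $p$, and the last uses the identity $x^2+xy+y^2=(x+\tfrac{y}{2})^2+3(\tfrac{y}{2})^2$ together with the fact that $p\equiv 5\pmod 6$ forces $-3$ to be a quadratic non-residue. The only differences are that you spell out the interval bounds and the reciprocity step more explicitly, and you offer an equivalent reformulation via cube roots of unity, but the underlying argument is the paper's.
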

\begin{proof}
Since $x,y,z,t\in \mathbb{F}_{p}$ and $x,y,z,t\in\{1,2,\dots,\frac{p-5}{6}\}$, we have $x+y\ne0$, $x+y+z+t\ne0$, $x+5y\ne0$.
Note that $p\equiv5\pmod6$, then $-3$ is a non-quadratic residue module $p$. Hence, $x^{2}+xy+y^{2}=(x+\frac{y}{2})^{2}+3(\frac{y}{2})^{2}\ne0$.
\end{proof}

Now we provide our construction as follows.
\begin{construction}
 The graph $G_{p}$ is defined with vertex set $V:=S\times\mathbb{F}_{p}\times\mathbb{F}_{p}$, where $x=(x_{1},x_{2},x_{3})\in V$ is joined to $y=(y_{1},y_{2},y_{3})\in V$ if $x\ne y$ and
\begin{align*}
&x_{2}+y_{3}=x_{1}y_{1}^{2},\\
&x_{3}+y_{2}=x_{1}^{2}y_{1}.
\end{align*}
\end{construction}
For any vertex $x=(x_{1},x_{2},x_{3})\in V$, from above two equations, any choice of $y_{1}\in S$ can uniquely determine $y_{2}$ and $y_{3}$. Note that $x$ should be different from $y$, then each vertex has degree at least $\frac{p-5}{18}-1=\frac{p-23}{18}$. Hence $G_{p}$ has $n:=\lceil\frac{p-5}{18}\rceil p^{2}$ vertices and at least $\frac{1}{18\times18\times2}(p-23)(p-5)p^{2}=\Omega(n^{\frac{4}{3}})$ edges. In the following of this section, we will prove that $G_{p}$ is $K_{3,25}'$-free.
We begin with the following simple lemma.
\begin{lemma}\label{subdlemma1}
If $x,y\in V$ are distinct and have a common neighbour, then $x_{1}\ne y_{1}$, $x_{2}\ne y_{2}$ and $x_{3}\ne y_{3}$.
\end{lemma}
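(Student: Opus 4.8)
The plan is to use the two defining equations of $G_p$ applied to the two edges $xz$ and $yz$, where $z$ is the assumed common neighbour, and then to eliminate the coordinates of $z$ that play no structural role. The only external facts I need are that the first coordinate of every vertex lies in $S$ (hence is a \emph{nonzero} element of $\mathbb{F}_p$), and the inequality $x_1+y_1\neq 0$ for $x_1,y_1\in S$ supplied by Lemma~\ref{subdlemma3}.

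First I would record the four equations coming from the two edges: from $x\sim z$ we have $x_2+z_3=x_1z_1^2$ and $x_3+z_2=x_1^2z_1$, and from $y\sim z$ we have $y_2+z_3=y_1z_1^2$ and $y_3+z_2=y_1^2z_1$. Subtracting the $y$-equations from the corresponding $x$-equations cancels $z_2$ and $z_3$, leaving the two identities
\[
x_2-y_2=(x_1-y_1)\,z_1^2,\qquad x_3-y_3=(x_1-y_1)(x_1+y_1)\,z_1.
\]

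Then I would dispatch the three assertions by contradiction. If $x_1=y_1$, both right-hand sides vanish, forcing $x_2=y_2$ and $x_3=y_3$, i.e.\ $x=y$, contrary to hypothesis; hence $x_1\neq y_1$. If $x_2=y_2$, the first identity gives $(x_1-y_1)z_1^2=0$, and since $x_1\neq y_1$ this forces $z_1=0$, impossible because $z_1\in S\subseteq\{1,\dots,\frac{p-5}{6}\}$. If $x_3=y_3$, the second identity gives $(x_1-y_1)(x_1+y_1)z_1=0$; using $x_1\neq y_1$ and $z_1\neq 0$ once more, we get $x_1+y_1=0$, which contradicts Lemma~\ref{subdlemma3} since $x_1,y_1\in S$.

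I do not expect any genuine obstacle here: the whole argument is a single elimination step followed by three short case checks. The only points one must not overlook are that every vertex's first coordinate lies in $S$, so that $z_1\neq 0$ may be invoked, and that the $x+y\neq 0$ clause of Lemma~\ref{subdlemma3} is precisely what is needed to rule out $x_3=y_3$.
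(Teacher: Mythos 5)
Your proof is correct and follows essentially the same route as the paper: write the four edge equations for the common neighbour, eliminate its second and third coordinates, and rule out each coincidence in turn using $z_1\in S\subseteq\mathbb{F}_p^*$ and the $x_1+y_1\neq 0$ clause of Lemma~\ref{subdlemma3}. The only cosmetic difference is that you package the subtraction into two displayed identities before casing, and you make the use of $z_1\neq 0$ explicit where the paper leaves it implicit.
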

\begin{proof}
Suppose $x,y$ have a common neighbour $u$, then
\begin{align}
&x_{2}+u_{3}=x_{1}u_{1}^{2},\label{subeq1}\\
&x_{3}+u_{2}=x_{1}^{2}u_{1},\label{subeq2}\\
&y_{2}+u_{3}=y_{1}u_{1}^{2},\label{subeq3}\\
&y_{3}+u_{2}=y_{1}^{2}u_{1}.\label{subeq4}
\end{align}
Now we divide our discussion into three cases.

{\bf{Case 1: $x_{1}=y_{1}.$}}

For this case, from Eqs. (\ref{subeq1}) and (\ref{subeq3}), we have $x_{2}=y_{2}$. Similarly, from Eqs. (\ref{subeq2}) and (\ref{subeq4}), we have $x_{3}=y_{3}$. Hence $x=y$, which is a contradiction.

{\bf{Case 2: $x_{2}=y_{2}.$}}

For this case, from Eqs. (\ref{subeq1}) and (\ref{subeq3}), we have $(x_{1}-y_{1})u_{1}^{2}=0$. Then $x_{1}=y_{1}$, this has been done in Case 1.

{\bf{Case 3: $x_{3}=y_{3}.$}}

For this case, from Eqs. (\ref{subeq2}) and (\ref{subeq4}), we have $u_{1}(x_{1}^{2}-y_{1}^{2})=0$. Hence either $x_{1}=y_{1}$ or $x_{1}+y_{1}=0$ Hence at least one of $x_{1}=y_{1}$ and $x_{1}+y_{1}=0$ occurs. If $x_{1}=y_{1}$, this has been done in Case 1. If $x_{1}+y_{1}=0$, this contradicts Lemma~\ref{subdlemma3}.
\end{proof}
For any given $a,b,c\in V$ with $a,b,c$ being pairwise distinct, we estimate the number of sequences $(x,y,z, w)\in V^{4}$ with $x,y,z,w$ being pairwise distinct, such that $ax,xw,by,yw,cz,zw$ are edges in $G_{p}$. We will prove that there are at most $24$ different such sequences. By the definition of graph $G_{p}$, we have
\begin{align*}
&a_{2}+x_{3}=a_{1}x_{1}^{2}, && a_{3}+x_{2}=a_{1}^{2}x_{1},\\
&w_{2}+x_{3}=w_{1}x_{1}^{2}, && w_{3}+x_{2}=w_{1}^{2}x_{1},\\
&b_{2}+y_{3}=b_{1}y_{1}^{2}, && b_{3}+y_{2}=b_{1}^{2}y_{1},\\
&w_{2}+y_{3}=w_{1}y_{1}^{2}, && w_{3}+y_{2}=w_{1}^{2}y_{1},\\
&c_{2}+z_{3}=c_{1}z_{1}^{2}, && c_{3}+z_{2}=c_{1}^{2}z_{1},\\
&w_{2}+z_{3}=w_{1}z_{1}^{2}, && w_{3}+z_{2}=w_{1}^{2}z_{1}.
\end{align*}
Cancelling $x_{2},x_{3},y_{2},y_{3},z_{2},z_{3}$ from the above equations, we can get the following equations
\begin{align}
&f_{1}:=a_{2}-w_{2}-x_{1}^{2}(a_{1}-w_{1})=0,\label{subdeq1}\\
&f_{2}:=a_{3}-w_{3}-x_{1}(a_{1}^{2}-w_{1}^{2})=0,\label{subdeq2}\\
&f_{3}:=b_{2}-w_{2}-y_{1}^{2}(b_{1}-w_{1})=0,\label{subdeq3}\\
&f_{4}:=b_{3}-w_{3}-y_{1}(b_{1}^{2}-w_{1}^{2})=0,\label{subdeq4}\\
&f_{5}:=c_{2}-w_{2}-z_{1}^{2}(c_{1}-w_{1})=0,\label{subdeq5}\\
&f_{6}:=c_{3}-w_{3}-z_{1}(c_{1}^{2}-w_{1}^{2})=0.\label{subdeq6}
\end{align}
In the following of this section, we divide our discussions into three parts addressed in the following subsections in sequence.
\subsection{$a_{1}=b_{1}$ or $a_{1}=c_{1}$ or $b_{1}=c_{1}$}\label{subsec1}
We begin with the following lemma.
\begin{lemma}\label{subdlemma4}\
\begin{enumerate}
  \item[(1)] If $a_{1}=b_{1}$, then $a_{2}\ne b_{2}$ and $a_{3}\ne b_{3}$.
  \item[(2)] If $a_{1}=c_{1}$, then $a_{2}\ne c_{2}$ and $a_{3}\ne c_{3}$.
  \item[(3)] If $b_{1}=c_{1}$, then $b_{2}\ne c_{2}$ and $b_{3}\ne c_{3}$.
\end{enumerate}
\end{lemma}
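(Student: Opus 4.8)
The plan is to prove all three statements uniformly, since they are symmetric: it suffices to prove (1), and (2), (3) follow by relabelling. So suppose $a_{1}=b_{1}$ and, for contradiction, that $a_{2}=b_{2}$ (the case $a_{3}=b_{3}$ being analogous). The idea is to combine this with the defining relations of the graph. Recall that in the $K_{3,25}'$-configuration we are analyzing, $a$ and $b$ each have a neighbour, namely $x$ and $y$ respectively, with both $x$ and $y$ adjacent to the common vertex $w$; in particular $a\ne b$ are distinct vertices of $V$, so $a_{1}=b_{1}$ together with $a_{2}=b_{2}$ forces $a_{3}\ne b_{3}$, and we must derive a contradiction from that residual freedom.

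First I would write down the relevant equations: from $ax$ being an edge we have $a_{2}+x_{3}=a_{1}x_{1}^{2}$ and $a_{3}+x_{2}=a_{1}^{2}x_{1}$; from $by$ being an edge, $b_{2}+y_{3}=b_{1}y_{1}^{2}$ and $b_{3}+y_{2}=b_{1}^{2}y_{1}$; from $xw$ and $yw$ being edges, $w_{2}+x_{3}=w_{1}x_{1}^{2}$, $w_{3}+x_{2}=w_{1}^{2}x_{1}$, $w_{2}+y_{3}=w_{1}y_{1}^{2}$, $w_{3}+y_{2}=w_{1}^{2}y_{1}$. Substituting $a_{1}=b_{1}$ and $a_{2}=b_{2}$ into the first coordinates: subtracting $a_{2}+x_{3}=a_{1}x_{1}^{2}$ from $b_{2}+y_{3}=b_{1}y_{1}^{2}$ gives $x_{3}-y_{3}=a_{1}(y_{1}^{2}-x_{1}^{2})$, while subtracting $w_{2}+x_{3}=w_{1}x_{1}^{2}$ from $w_{2}+y_{3}=w_{1}y_{1}^{2}$ gives $x_{3}-y_{3}=w_{1}(y_{1}^{2}-x_{1}^{2})$. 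Hence $(a_{1}-w_{1})(x_{1}^{2}-y_{1}^{2})=0$. Since $x$ and $y$ are distinct with common neighbour $w$, Lemma~\ref{subdlemma1} gives $x_{1}\ne y_{1}$, and Lemma~\ref{subdlemma3} (applied to $x_{1},y_{1}\in S$) gives $x_{1}+y_{1}\ne 0$, so $x_{1}^{2}\ne y_{1}^{2}$; therefore $a_{1}=w_{1}$. But $a$ and $x$ being adjacent (equivalently $a$ and $w$ sharing the neighbour $x$, or directly noting $a\ne w$ with common neighbour), Lemma~\ref{subdlemma1} forces $a_{1}\ne w_{1}$ whenever $a$ and $w$ have a common neighbour; here $x$ is a common neighbour of $a$ and $w$, so $a_{1}\ne w_{1}$, a contradiction.

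For the case $a_{3}=b_{3}$: subtracting the two $x_{2}$-equations $a_{3}+x_{2}=a_{1}^{2}x_{1}$ and $b_{3}+y_{2}$-type relations, together with the $w$-relations $w_{3}+x_{2}=w_{1}^{2}x_{1}$ and $w_{3}+y_{2}=w_{1}^{2}y_{1}$, yields in the same way $(a_{1}^{2}-w_{1}^{2})(x_{1}-y_{1})=0$ after eliminating $x_{2}-y_{2}$; since $x_{1}\ne y_{1}$ we get $a_{1}^{2}=w_{1}^{2}$, i.e. $a_{1}=w_{1}$ or $a_{1}=-w_{1}$, and both are impossible — the first by Lemma~\ref{subdlemma1} as above, the second by Lemma~\ref{subdlemma3} since $a_{1},w_{1}\in S$ gives $a_{1}+w_{1}\ne 0$. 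This completes (1), and (2) and (3) are obtained by permuting the roles of $(a,x)$, $(b,y)$, $(c,z)$.

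I do not anticipate a genuine obstacle here: the lemma is essentially a warm-up that records which "collisions" among the first-coordinate-fixed vertices are already forbidden, and every step reduces to a one-line elimination plus an invocation of Lemma~\ref{subdlemma1} or Lemma~\ref{subdlemma3}. The only point requiring a little care is making sure that in each case the pair of vertices whose coordinates we are comparing genuinely has a common neighbour among $\{x,y,z,w\}$, so that Lemma~\ref{subdlemma1} applies; this is immediate from the configuration $ax,xw,by,yw,cz,zw$, in which $x$ witnesses a common neighbour for $\{a,w\}$, $y$ for $\{b,w\}$, and $z$ for $\{c,w\}$.
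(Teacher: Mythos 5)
Your proof is correct and takes essentially the same route as the paper: in each case you arrive at the factored relation $(a_1-w_1)(x_1^2-y_1^2)=0$ (respectively $(a_1^2-w_1^2)(x_1-y_1)=0$) and kill both factors by Lemmas~\ref{subdlemma1} and \ref{subdlemma3}, the only cosmetic differences being that you work from the raw edge equations rather than the precomputed $f_i$ in Eqs.~(\ref{subdeq1})--(\ref{subdeq6}) and that you resolve the dichotomy in the opposite order (concluding $a_1=w_1$ rather than $x_1^2=y_1^2$). One harmless slip: the two intermediate equalities should read $y_3-x_3=a_1(y_1^2-x_1^2)$ and $y_3-x_3=w_1(y_1^2-x_1^2)$ (you wrote $x_3-y_3$ on the left both times), but the sign error is consistent and the product equation you extract is correct.
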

\begin{proof}
We will only prove the first statement, the proofs of the other two are similar.

If $a_{2}=b_{2}$, then by Eqs. (\ref{subdeq1}) and (\ref{subdeq3}), we have $x_{1}^{2}=y_{1}^{2}$, hence either $x_{1}=y_{1}$ or $x_{1}=-y_{1}$, which contradicts Lemmas~\ref{subdlemma3} and \ref{subdlemma1}.

If $a_{3}=b_{3}$, then by Eqs. (\ref{subdeq2}) and (\ref{subdeq4}), we have $x_{1}=y_{1}$, which contradicts Lemma~\ref{subdlemma3}.
\end{proof}

Without loss of generality, we assume that $a_{1}=b_{1}$. Then Eqs. (\ref{subdeq1})-(\ref{subdeq6}) become
\begin{align*}
&f_{1}:=a_{2}-w_{2}-x_{1}^{2}(a_{1}-w_{1})=0,\\
&f_{2}:=a_{3}-w_{3}-x_{1}(a_{1}^{2}-w_{1}^{2})=0,\\
&f_{3}:=b_{2}-w_{2}-y_{1}^{2}(a_{1}-w_{1})=0,\\
&f_{4}:=b_{3}-w_{3}-y_{1}(a_{1}^{2}-w_{1}^{2})=0,\\
&f_{5}:=c_{2}-w_{2}-z_{1}^{2}(c_{1}-w_{1})=0,\\
&f_{6}:=c_{3}-w_{3}-z_{1}(c_{1}^{2}-w_{1}^{2})=0.
\end{align*}

Now we regard $f_{i}$ ($i=1,2,\dots,6$) as polynomials with variables $x_{1},y_{1},z_{1},w_{1},w_{2},w_{3}$. By the MAGMA Program 1 in the Appendix A, we can get that
\begin{align*}
&R(f_{1},f_{2};x_{1})=g_{1}\cdot(a_{1}-w_{1}),\\
&R(f_{3},f_{4};y_{1})=g_{2}\cdot(a_{1}-w_{1}),\\
&R(f_{5},f_{6};z_{1})=g_{3}\cdot(c_{1}-w_{1}).
\end{align*}
By Lemma~\ref{subdlemma1}, we have $a_{1}\ne w_{1}$ and $c_{1}\ne w_{1}$. Then $g_{1}=g_{2}=g_{3}=0$ and
\begin{align*}
&R(g_{1},g_{2};w_{2})=g_{4}\cdot(a_{1}-w_{1})(a_{1}+w_{1})^{2},\\
&R(g_{1},g_{3};w_{2})=g_{5}.
\end{align*}
By Lemmas~\ref{subdlemma3} and \ref{subdlemma1}, $a_{1}+w_{1}\ne0$ and $a_{1}-w_{1}\ne0$, then $g_{4}=g_{5}=0$. Let $h=R(g_{4},g_{5};w_{3})$, then $h$ is a polynomial of $w_{1}$ with degree $8$. We can write $h$ as $h=\sum_{i=0}^{8}h_{i}w_{1}^{i}$. Then we can compute to get that
\begin{align*}
h_{8}=-(a_{2}-b_{2})^{2}(a_{1}-c_{1}).
\end{align*}
By Lemma~\ref{subdlemma4}, $a_{2}\ne b_{2}$.
If $a_{1}\ne c_{1}$, then there are at most 8 solutions for $w_{1}$. For any fixed $w_{1}$, $g_{4}$ is a polynomial of $w_{3}$ with degree 1. We write $g_{4}$ as $g_{4}=s_{1}w_{3}+s_{0}$, where $s_{1}=a_{3}-b_{3}\ne0$. Hence there is at most $1$ solution for $w_{3}$. If $w_{1}$ and $w_{3}$ are given, then all the remaining variables are uniquely determined. Hence there are at most $8\ (=8\times1)$ different sequences of $(x,y,z,w)$ for this case.

If $a_{1}=c_{1}$, then we can compute to get that $g_{5}=g_{6}\cdot(a_{1}-w_{1})(a_{1}+w_{1})^{2}$. Let $h'=R(g_{4},g_{6};w_{3})$, then $h'$ is a polynomial of $w_{1}$ with degree $3$.  We can write $h'$ as $h'=\sum_{i=0}^{3}k_{i}w_{1}^{i}$. Regarding $k_{i}$ ($i=0,1,2,3$) as polynomials with variable $a_{2}$, then by the MAGMA Program 2 in the Appendix A, we have
\begin{align*}
R(k_{0},k_{3};a_{2})=4(b_{3}-c_{3})^{2}(a_{3}-c_{3})(a_{3}-b_{3}).
\end{align*}
By Lemma~\ref{subdlemma4}, $R(k_{0},k_{3};a_{2})\ne0$, then $k_{0}$ and $k_{3}$ cannot be both zero. Hence there are at most 3 solutions for $w_{1}$. Similarly as above, for any fixed $w_{1}$, there is at most $1$ solution for $w_{3}$. If $w_{1}$ and $w_{3}$ are given, then all the remaining variables are uniquely determined. Hence there are at most $3\ (=3\times1)$ different sequences of $(x,y,z,w)$ for this case.

Therefore, if $a_{1}=b_{1}$ or $a_{1}=c_{1}$ or $b_{1}=c_{1}$, then there are at most $8$ different sequences of $(x,y,z,w)$.

\subsection{$a_{1}\ne b_{1}$, $a_{1}\ne c_{1}$, $b_{1}\ne c_{1}$ and $a_{2}=b_{2}$ or $a_{2}=c_{2}$ or $b_{2}=c_{2}$}\label{subsec2}
Without loss of generality, we assume that $a_{2}=b_{2}$. Then Eqs. (\ref{subdeq1})-(\ref{subdeq6}) become
\begin{align*}
&f_{1}:=a_{2}-w_{2}-x_{1}^{2}(a_{1}-w_{1})=0,\\
&f_{2}:=a_{3}-w_{3}-x_{1}(a_{1}^{2}-w_{1}^{2})=0,\\
&f_{3}:=a_{2}-w_{2}-y_{1}^{2}(b_{1}-w_{1})=0,\\
&f_{4}:=b_{3}-w_{3}-y_{1}(b_{1}^{2}-w_{1}^{2})=0,\\
&f_{5}:=c_{2}-w_{2}-z_{1}^{2}(c_{1}-w_{1})=0,\\
&f_{6}:=c_{3}-w_{3}-z_{1}(c_{1}^{2}-w_{1}^{2})=0.
\end{align*}
Now we regard $f_{i}$ ($i=1,2,\dots,6$) as polynomials with variables $x_{1},y_{1},z_{1},w_{1},w_{2},w_{3}$, respectively. By the MAGMA Program 3 in the Appendix A, we can get that
\begin{align*}
&R(f_{1},f_{2};x_{1})=g_{1}\cdot(a_{1}-w_{1}),\\
&R(f_{3},f_{4};y_{1})=g_{2}\cdot(b_{1}-w_{1}),\\
&R(f_{5},f_{6};z_{1})=g_{3}\cdot(c_{1}-w_{1}).
\end{align*}
Note that $a$ and $w$ ($b$ and $w$, $c$ and $w$, resp.) have a common neighbour, by Lemma~\ref{subdlemma1}, we have $a_{1}\ne w_{1}$ ($b_{1}\ne w_{1}$, $c_{1}\ne w_{1}$, resp.). Then $g_{1}=g_{2}=g_{3}=0$ and
\begin{align*}
&R(g_{1},g_{2};w_{2})=g_{4},\\
&R(g_{1},g_{3};w_{2})=g_{5},\\
&R(g_{4},g_{5};w_{3})=h\cdot (a_{1}-w_{1})^{2}(a_{1}+w_{1})^{4},
\end{align*}
where $h$ is a polynomial of $w_{1}$ with degree $10$. By Lemmas~\ref{subdlemma3} and \ref{subdlemma1}, $a_{1}\ne w_{1}$ and $a_{1}+w_{1}\ne0$, then $h=0$. We can write $h$ as $h=\sum_{i=0}^{10}h_{i}w_{1}^{i}$, where
\begin{align*}
h_{10}=(a_{2}-c_{2})^{2}(a_{1}-b_{1})^{2}.
\end{align*}

If $a_{2}\ne c_{2}$, then $h_{10}\ne0$. Hence there are at most 10 solutions for $w_{1}$. For any fixed $w_{1}$, $g_{4}$ and $g_{5}$ are polynomials of $w_{3}$ with degree 2. We write $g_{4}$ and $g_{5}$ as $g_{4}=\sum_{i=0}^{2}s_{i}w_{3}^{i}$ and $g_{5}=\sum_{i=0}^{2}t_{i}w_{3}^{i}$, then $s_{2}=s_{2}'\cdot(a_{1}-b_{1})$ and $t_{2}=t_{2}'\cdot(a_{1}-c_{1})$. We can compute to get that $s_{2}'-t_{2}'=(b_{1}-c_{1})(a_{1}+b_{1}+c_{1}+w_{1})$. By Lemmas~\ref{subdlemma3} and \ref{subdlemma1}, we have $s_{2}'-t_{2}'\ne0$. Hence there is at least one of $s_{2},t_{2}$ not being 0, then there are at most $2$ solutions for $w_{3}$. If $w_{1}$ and $w_{3}$ are given, then all the remaining variables are uniquely determined. Hence there are at most $20\ (=10\times2)$ different sequences of $(x,y,z,w)$ for this case.

If $a_{2}=c_{2}$, then $h_{i}=0$ for $i=6,7,8,9,10$. We can regard $h_{4}$ and $h_{5}$ as polynomials with variable $a_{1}$. Then by the MAGMA Program 4 in the Appendix A, we have
\begin{align*}
R(h_{4},h_{5};a_{1})=80(b_{1}-c_{1})^{2}(a_{3}-b_{3})^{4}(a_{3}-c_{3})^{4}(b_{3}-c_{3})^{4}.
\end{align*}

If $R(h_{4},h_{5};a_{1})\ne0$, then at least one of $h_{4},h_{5}$ is not 0. Hence there are at most 5 solutions for $w_{1}$. For any fixed $w_{1}$, through a similar discussion as above, there are at most $2$ solutions for $w_{3}$. If $w_{1}$ and $w_{3}$ are given, then all the remaining variables are uniquely determined. Hence there are at most $10\ (=5\times2)$ different sequences of $(x,y,z,w)$ for this case.

If $R(h_{4},h_{5};a_{1})=0$, without loss of generality, we assume that $a_{3}=b_{3}$.  Then by the MAGMA Program 5 in the Appendix A, we have
\begin{align*}
g_{4}=(a_{3}-w_{3})^{2}(a_{1}-b_{1})\cdot g_{4}',
\end{align*}
where $g_{4}'=-w_{1}^{2}+(a_{1}+b_{1})w_{1}+a_{1}^{2}+a_{1}b_{1}+b_{1}^{2}$. By Eq.~(\ref{subdeq2}), if $a_{3}=w_{3}$, then $a_{1}^{2}=w_{1}^{2}$, which is a contradiction. Hence $a_{3}\ne w_{3}$ and $a_{1}\ne b_{1}$, then there are at most 2 solutions for $w_{1}$. For any fixed $w_{1}$, $g_{5}$ is a polynomial of $w_{3}$ with degree 2. We write $g_{5}$ as $g_{5}=\sum_{i=0}^{2}t_{i}w_{3}^{i}$, then $t_{2}=t_{2}'\cdot(a_{1}-c_{1})$. We can compute to get that $g_{4}'-t_{2}'=(b_{1}-c_{1})(a_{1}+b_{1}+c_{1}+w_{1})$. By Lemmas~\ref{subdlemma3} and \ref{subdlemma1}, we have $g_{4}'-t_{2}'\ne0$. Hence there are at most $2$ solutions for $w_{3}$. If $w_{1}$ and $w_{3}$ are given, then all the remaining variables are uniquely determined. Hence there are at most $4\ (=2\times2)$ different sequences of $(x,y,z,w)$ for this case.

Therefore, if $a_{1}\ne b_{1}$, $a_{1}\ne c_{1}$, $b_{1}\ne c_{1}$ and $a_{2}=b_{2}$ or $a_{2}=c_{2}$ or $b_{2}=c_{2}$, then there are at most $20$ different sequences of $(x,y,z,w)$.

\subsection{$a_{1}\ne b_{1}$, $a_{1}\ne c_{1}$, $b_{1}\ne c_{1}$, $a_{2}\ne b_{2}$, $a_{2}\ne c_{2}$ and $b_{2}\ne c_{2}$}\label{subsec3}
For this case, we begin with the following lemma.
\begin{lemma}\label{subdlemma2}
If $G_{p}$ contains a copy of $\theta_{3,3}$, which consists of the set of edges $\{da,db,dc,ax,by,cz,\\wx,wy,wz\}$, then $a_{1}y_{1}-a_{1}z_{1}-b_{1}x_{1}+b_{1}z_{1}+c_{1}x_{1}-c_{1}y_{1}=0$.
\end{lemma}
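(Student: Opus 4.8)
The plan is to translate the existence of the $\theta_{3,3}$ into the defining equations of $G_p$ exactly as in the proof of Theorem~\ref{vw19}, then eliminate the ``middle'' coordinates to expose a single linear relation among the first coordinates. Concretely, the $\theta_{3,3}$ with apexes $d$ and $w$ and internal vertices $a,b,c$ (adjacent to $d$) and $x,y,z$ (adjacent to $w$), with the three paths $d\!-\!a\!-\!x\!-\!w$, $d\!-\!b\!-\!y\!-\!w$, $d\!-\!c\!-\!z\!-\!w$, gives for each path two equations from each of the four edges. First I would write out, for the path through $a$ and $x$, the four relations
\begin{align*}
&a_{2}+d_{3}=a_{1}d_{1}^{2}, && a_{3}+d_{2}=a_{1}^{2}d_{1},\\
&a_{2}+x_{3}=?
\end{align*}
— more precisely, using the construction's rule $u_2+v_3=u_1v_1^2$, $u_3+v_2=u_1^2v_1$ for an edge $uv$, I would record the eight equations coming from the edges $da,ax,xw$ (and symmetrically for $b,y$ and $c,z$), keeping $d$ and $w$ as the repeated endpoints.

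The key step is the elimination. From the two equations attached to edge $da$ and the two attached to edge $ax$, I would cancel $a_2$ and $a_3$ (the internal coordinates of $a$) to obtain relations linking $d_2,d_3,x_2,x_3$ with $a_1,d_1,x_1$; doing the same with edge $xw$ cancels $x_2,x_3$. The upshot, just as Eqs.~(\ref{vweq1})--(\ref{vweq3}) arose from the octagon, is that summing the appropriate cancellations around the path $d\!-\!a\!-\!x\!-\!w$ yields an identity of the shape $d_2-w_2 = (\text{something in }a_1,x_1,d_1,w_1)$ and $d_3-w_3 = (\text{something})$. Subtracting the analogous identities for the paths through $b,y$ and through $c,z$ makes the left-hand sides $d_2-w_2$, $d_3-w_3$ cancel, leaving two polynomial relations purely in $a_1,b_1,c_1,x_1,y_1,z_1,d_1,w_1$. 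I then expect one of these (after using $a_1\neq d_1$ etc., which holds by Lemma~\ref{subdlemma1} since consecutive vertices on the paths share neighbours — actually one uses that $a,b,c$ pairwise, or $a_1\ne d_1$ type facts follow from having the common neighbour $d$) to factor, with one factor being a spurious difference that cannot vanish and the other factor being exactly $a_{1}y_{1}-a_{1}z_{1}-b_{1}x_{1}+b_{1}z_{1}+c_{1}x_{1}-c_{1}y_{1}$.

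The cleanest way to organize this is via the resultant machinery already set up: treat the six equations from the three paths as polynomials in the internal coordinates $a_2,a_3,b_2,b_3,c_2,c_3,x_2,x_3,y_2,y_3,z_2,z_3$ and the apex coordinates $d_2,d_3,w_2,w_3$, eliminate all of these, and read off the surviving relation — I anticipate that a short MAGMA computation of the same flavour as Programs~\ref{subdprogram1}--\ref{subdprogram5} does this and produces the stated cubic-in-first-coordinates factor after discarding factors like $(a_1-d_1)$, $(x_1-w_1)$, etc., which are nonzero by Lemma~\ref{subdlemma1}. The main obstacle is bookkeeping: one must make sure that when the $d_2,d_3,w_2,w_3$ terms are eliminated the remaining relation genuinely reduces to the claimed linear-in-each-variable expression and not merely to some multiple of it times extra nonzero factors; identifying and justifying (via Lemmas~\ref{subdlemma3} and \ref{subdlemma1}) that those extra factors are nonzero is the delicate part, and is exactly where the hypotheses on $S$ are used. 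Everything else is routine elimination.
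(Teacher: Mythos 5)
Your high-level strategy — encode the $\theta_{3,3}$ as edge equations, eliminate the second/third coordinates, and read off a relation among first coordinates whose spurious factors are killed off via Lemmas~\ref{subdlemma3} and \ref{subdlemma1} — is indeed the same strategy as the paper's. However, as stated, the plan has a genuine gap that you have not identified: after eliminating $a_2,\dots,z_3$ and $d_2,d_3,w_2,w_3$, the surviving polynomial relations still contain $d_1$ and $w_1$, whereas the claimed identity $a_1y_1-a_1z_1-b_1x_1+b_1z_1+c_1x_1-c_1y_1=0$ does not. You say ``when the $d_2,d_3,w_2,w_3$ terms are eliminated the remaining relation genuinely reduces to the claimed linear-in-each-variable expression,'' but that cannot happen directly; two more variables, $d_1$ and $w_1$, must still be eliminated, and this is where all the real work lies.

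The paper's route to eliminate $d_1,w_1$ is not a routine further resultant. It first produces hexagon conditions by resultants over $b_1$ and $c_1$, combines two hexagons through the same $a,x$ to obtain the ratio identity $\frac{d_1+x_1}{w_1+a_1}=\frac{y_1+z_1}{w_1-a_1}$, and then exploits the $(d,a,b,c)\leftrightarrow(w,x,y,z)$ symmetry of $\theta_{3,3}$ to obtain the companion ratio and hence $g_1=(y_1+z_1)(b_1+c_1)-(w_1-a_1)(d_1-x_1)=0$, with $g_2,g_3$ by cyclic symmetry. Only then does it take $R(g_1,g_2;d_1)$ and $R(g_1,g_3;d_1)$, and the crucial spurious factor that appears is $(a_1+b_1+c_1-w_1)$. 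Ruling this factor out is \emph{not} covered by Lemma~\ref{subdlemma3} or Lemma~\ref{subdlemma1}: it uses the fresh observation that every element of $S$ is $\equiv 1\pmod 3$, so $a_1+b_1+c_1\equiv 0$ while $w_1\equiv 1\pmod 3$. Your proposal attributes the nonvanishing of all spurious factors to Lemmas~\ref{subdlemma3} and \ref{subdlemma1}, which misses this mod-$3$ argument, and it also ignores the symmetry step that is needed to keep $b_1,c_1$ alive in the final relation (a naive resultant chain first eliminating $b_1$ and $c_1$, then $d_1,w_1$, would leave a relation in the wrong set of variables). So the ``short MAGMA computation'' you envision would not produce the stated identity without the paper's careful intermediate structure.
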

\begin{proof}
We first consider the edges $da,ax,xw,wy,yb,bd$, which form a hexagon. By the definition of $G_{p}$, we have
\begin{align}
&d_{3}+a_{2}=d_{1}^{2}a_{1},&&a_{3}+d_{2}=a_{1}^{2}d_{1},\label{subdeq7}\\
&a_{2}+x_{3}=a_{1}x_{1}^{2},&&x_{2}+a_{3}=x_{1}a_{1}^{2},\label{subdeq8}\\
&x_{3}+w_{2}=x_{1}^{2}w_{1},&&w_{3}+x_{2}=w_{1}^{2}x_{1},\label{subdeq9}\\
&w_{2}+y_{3}=w_{1}y_{1}^{2},&&y_{2}+w_{3}=y_{1}w_{1}^{2},\label{subdeq10}\\
&y_{3}+b_{2}=y_{1}^{2}b_{1},&&b_{3}+y_{2}=b_{1}^{2}y_{1},\label{subdeq11}\\
&b_{2}+d_{3}=b_{1}d_{1}^{2},&&d_{2}+b_{3}=d_{1}b_{1}^{2}.\label{subdeq12}
\end{align}
Then we can compute to get that
\begin{align*}
&f_{1}:=d_{1}^{2}a_{1}-a_{1}x_{1}^{2}+x_{1}^{2}w_{1}-w_{1}y_{1}^{2}+y_{1}^{2}b_{1}-b_{1}d_{1}^{2}=0,\\
&f_{2}:=d_{1}a_{1}^{2}-a_{1}^{2}x_{1}+x_{1}w_{1}^{2}-w_{1}^{2}y_{1}+y_{1}b_{1}^{2}-b_{1}^{2}d_{1}=0,
\end{align*}
where $f_{1}$ is from the left six equations of (\ref{subdeq7})-(\ref{subdeq12}) and $f_{2}$ is from the right six equations of (\ref{subdeq7})-(\ref{subdeq12}). Regarding $f_{1},f_{2}$ as polynomials with variables $a_{1},u_{1},v_{1},b_{1},x_{1},w_{1}$, we can compute to get that
\begin{align*}
R(f_{1},f_{2};b_{1})=&(a_{1}-w_{1})(x_{1}-y_{1})(d_{1}-y_{1})(d_{1}-x_{1})(d_{1}^{2}a_{1}+d_{1}^{2}w_{1}-d_{1}a_{1}x_{1}-d_{1}a_{1}y_{1}+\\
&d_{1}x_{1}w_{1}+d_{1}w_{1}y_{1}-a_{1}x_{1}^{2}-a_{1}x_{1}y_{1}-a_{1}y_{1}^{2}+x_{1}^{2}w_{1}+x_{1}w_{1}y_{1}-w_{1}y_{1}^{2}).
\end{align*}
By Lemma~\ref{subdlemma1} and $f_{1}=f_{2}=0$, we have
 \begin{align*}
 d_{1}^{2}a_{1}+d_{1}^{2}w_{1}-d_{1}a_{1}x_{1}-d_{1}a_{1}y_{1}+d_{1}x_{1}w_{1}+d_{1}w_{1}y_{1}-a_{1}x_{1}^{2}-a_{1}x_{1}y_{1}-
 a_{1}y_{1}^{2}+x_{1}^{2}w_{1}+x_{1}w_{1}y_{1}-w_{1}y_{1}^{2}=0.
 \end{align*}
 Similarly, the edges $da,ax,xw,wz,zc,cd$ also form a hexagon, we have
 \begin{align*}
 d_{1}^{2}a_{1}+d_{1}^{2}w_{1}-d_{1}a_{1}x_{1}-d_{1}a_{1}z_{1}+d_{1}x_{1}w_{1}+d_{1}w_{1}z_{1}-a_{1}x_{1}^{2}-a_{1}x_{1}z_{1}-
 a_{1}z_{1}^{2}+x_{1}^{2}w_{1}+x_{1}w_{1}z_{1}-w_{1}z_{1}^{2}=0.
 \end{align*}
From the above two equations, we get
\begin{align*}
(d_{1}+x_{1})(w_{1}-a_{1})y_{1}-(w_{1}+a_{1})y_{1}^{2}=(d_{1}+x_{1})(w_{1}-a_{1})z_{1}-(w_{1}+a_{1})z_{1}^{2}.
\end{align*}
Then we can compute to get that
\begin{align*}
\frac{d_{1}+x_{1}}{w_{1}+a_{1}}=\frac{y_{1}+z_{1}}{w_{1}-a_{1}}.
\end{align*}
By the symmetry of $\theta_{3,3}$, we have
\begin{align*}
\frac{w_{1}+a_{1}}{d_{1}+x_{1}}=\frac{b_{1}+c_{1}}{d_{1}-x_{1}}.
\end{align*}
From the above two equations, we can get
\begin{align*}
g_{1}=(y_{1}+z_{1})(b_{1}+c_{1})-(w_{1}-a_{1})(d_{1}-x_{1})=0.
\end{align*}
By the symmetry of $\theta_{3,3}$ again, we also have
\begin{align*}
&g_{2}=(x_{1}+z_{1})(a_{1}+c_{1})-(w_{1}-b_{1})(d_{1}-y_{1})=0,\\
&g_{3}=(x_{1}+y_{1})(a_{1}+b_{1})-(w_{1}-c_{1})(d_{1}-z_{1})=0.
\end{align*}
Now we regard $g_{i}$ ($i=1,2,3$) as polynomials with variables $d_{1},w_{1}$. By the MAGMA Program 6 in the Appendix A, we can get that
\begin{align*}
&R(g_{1},g_{2};d_{1})=h_{1}\cdot(a_{1}+b_{1}+c_{1}-w_{1}),\\
&R(g_{1},g_{3};d_{1})=h_{2}\cdot(a_{1}+b_{1}+c_{1}-w_{1}).
\end{align*}
Note that $a_{1},b_{1},c_{1},w_{1}\in S$, then $w_{1}\in\{1,2,\dots,\frac{p-5}{6}\}$, $w_{1}\equiv1\pmod3$, $a_{1}+b_{1}+c_{1}\in\{3,\dots,\frac{p-5}{2}\}$ and $a_{1}+b_{1}+c_{1}\equiv0\pmod{3}$. Hence $a_{1}+b_{1}+c_{1}-w_{1}\ne0$. Then $h_{1}=h_{2}=0$, we can compute to get that $R(h_{1},h_{2};w_{1})=(y_{1}+z_{1})(a_{1}y_{1}-a_{1}z_{1}-b_{1}x_{1}+b_{1}z_{1}+c_{1}x_{1}-c_{1}y_{1})=0$. Hence $a_{1}y_{1}-a_{1}z_{1}-b_{1}x_{1}+b_{1}z_{1}+c_{1}x_{1}-c_{1}y_{1}=0$.
\end{proof}

\begin{remark}\label{subdrmk1}
In the above proof, we do not need the condition $d_{1}\in S$ for $d\in V$. Hence, if $d_{1}\in\mathbb{F}_{p}^{*}\backslash S$, then Lemma~\ref{subdlemma2} still holds.
\end{remark}

By Lemma~\ref{subdlemma1}, if $d,a,b,c,x,y,z,w$ form a copy of $\theta_{3,3}$ with edge set $\{da,db,dc,ax,by,cz,wx,\\wy,wz\}$, then $a_{3}\ne b_{3}$.
\begin{claim}\label{subdclaim6}
If $a_{1}y_{1}-a_{1}z_{1}-b_{1}x_{1}+b_{1}z_{1}+c_{1}x_{1}-c_{1}y_{1}=0$ and $a_{3}\ne b_{3}$, then there are at most $24$ different sequences of $(x,y,z,w)$.
\end{claim}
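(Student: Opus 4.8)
The plan is to combine the hypothesis $a_{1}y_{1}-a_{1}z_{1}-b_{1}x_{1}+b_{1}z_{1}+c_{1}x_{1}-c_{1}y_{1}=0$ with Eqs.~(\ref{subdeq1})--(\ref{subdeq6}) to eliminate all variables but $w_{1}$, reducing everything to a single univariate polynomial of controlled degree, and then to show that an admissible value of $w_{1}$ determines the rest of a sequence up to a bounded multiplicity. First I would eliminate $x_{1},y_{1},z_{1}$: from $f_{2}=0$ we get $x_{1}(a_{1}^{2}-w_{1}^{2})=a_{3}-w_{3}$, and since $a$ and $w$ have a common neighbour Lemma~\ref{subdlemma1} gives $a_{1}\ne w_{1}$, while $a_{1},w_{1}\in S$ together with Lemma~\ref{subdlemma3} give $a_{1}+w_{1}\ne 0$, so $x_{1}$ is a rational function of $w_{1},w_{3}$; similarly $y_{1},z_{1}$ come from $f_{4},f_{6}$. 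Substituting back into $f_{1},f_{3},f_{5}$ (equivalently forming $R(f_{1},f_{2};x_{1})$, $R(f_{3},f_{4};y_{1})$, $R(f_{5},f_{6};z_{1})$ and discarding the spurious factors $a_{1}-w_{1}$, $b_{1}-w_{1}$, $c_{1}-w_{1}$ exactly as in Sections~\ref{subsec1}--\ref{subsec2}) yields $g_{1},g_{2},g_{3}\in\mathbb{F}_{p}[w_{1},w_{2},w_{3}]$, each of degree $1$ in $w_{2}$; substituting the same expressions into the linear relation and clearing the denominators $a_{1}^{2}-w_{1}^{2}$, $b_{1}^{2}-w_{1}^{2}$, $c_{1}^{2}-w_{1}^{2}$ yields a further polynomial $g_{0}\in\mathbb{F}_{p}[w_{1},w_{3}]$ of degree $1$ in $w_{3}$.

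Next I would eliminate $w_{2}$ and then $w_{3}$. In the present subsection $a_{1}\ne b_{1}$, $a_{1}\ne c_{1}$, $b_{1}\ne c_{1}$, so the coefficients of $w_{2}$ in $g_{1},g_{2},g_{3}$ are nonzero, and $R(g_{1},g_{2};w_{2})$ and $R(g_{1},g_{3};w_{2})$ give $h_{1},h_{2}\in\mathbb{F}_{p}[w_{1},w_{3}]$ after once more deleting extraneous powers of $a_{1}\pm w_{1}$ (compare the identity $R(g_{4},g_{5};w_{3})=h\cdot(a_{1}-w_{1})^{2}(a_{1}+w_{1})^{4}$ in Section~\ref{subsec2}). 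Now $g_{0},h_{1},h_{2}$ all live in the two variables $w_{1},w_{3}$, so the system is overdetermined: taking $R(h_{1},h_{2};w_{3})$, with $R(g_{0},h_{1};w_{3})$ and $R(g_{0},h_{2};w_{3})$ as cross-checks — all legitimate because the relevant leading coefficients in $w_{3}$ are certified nonzero by $a_{3}\ne b_{3}$ together with the subsection hypotheses — produces a nonzero univariate polynomial $H(w_{1})$ vanishing at every admissible $w_{1}$, and a MAGMA computation shows that, after the spurious factors are removed, the number of its roots times the number of $w_{3}$ each root extends to is at most $24$. For each such pair $(w_{1},w_{3})$ the relation $g_{1}=0$ determines $w_{2}$, and then $x_{1},y_{1},z_{1}$, hence $x,y,z,w$, are determined; so there are at most $24$ sequences $(x,y,z,w)$.

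The step I expect to be the main obstacle is the bookkeeping of extraneous factors. Each resultant introduces spurious multiples of the shape $a_{1}-w_{1}$, $a_{1}+w_{1}$, $(a_{1}-w_{1})^{2}(a_{1}+w_{1})^{4}$, and possibly higher powers, and the bound $24$ is only meaningful once every one of them has been divided out; the crux is therefore to verify, using nothing beyond Lemmas~\ref{subdlemma1} and \ref{subdlemma3} and the standing hypotheses $a_{1}\ne b_{1}$, $a_{1}\ne c_{1}$, $b_{1}\ne c_{1}$, $a_{2}\ne b_{2}$, $a_{2}\ne c_{2}$, $b_{2}\ne c_{2}$, $a_{3}\ne b_{3}$, that each deleted factor is nonzero, and that $H(w_{1})$ is not the zero polynomial. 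The hypothesis $a_{3}\ne b_{3}$ is precisely what rules out the degenerate collapse that forced the separate sub-case analysis in Section~\ref{subsec2} (for instance the branch $a_{3}=b_{3}$ treated there), so I expect it to be invoked exactly when one argues that $H$ is nonzero and that $w_{3}$ is recoverable from $w_{1}$. As everywhere in Section~\ref{sectionsub}, the heavy resultant computations would be delegated to MAGMA.
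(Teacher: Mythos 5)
Your plan uses a genuinely different elimination order from the paper's. The paper sets $g_{1}=f_{1}-f_{3}$, $g_{2}=f_{1}-f_{5}$, $g_{3}=f_{2}-f_{4}$ to kill $w_{2},w_{3}$ first, then eliminates $w_{1}$, then eliminates $x_{1}$ using the linear relation $f_{7}$, and finally $z_{1}$, arriving at a degree-$8$ polynomial $s$ in $y_{1}$ whose relevant leading coefficients are shown nonvanishing; for each of the $\le 8$ values of $y_{1}$, a cubic in $z_{1}$ with leading coefficient $(a_{1}-c_{1})(a_{1}-b_{1})^{2}\ne 0$ gives $\le 3$ values of $z_{1}$, hence $8\times 3=24$. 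You instead eliminate $x_{1},y_{1},z_{1}$ via the pairs $(f_{1},f_{2}),(f_{3},f_{4}),(f_{5},f_{6})$, then $w_{2}$, then $w_{3}$, hoping to land on a univariate polynomial in $w_{1}$. The routes are not interchangeable, and the specific constant $24$ in the statement is an artifact of the paper's route (it is literally $8\times 3$); there is no reason your route, if it terminates, would produce the same bound.

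More seriously, your primary candidate for $H(w_{1})$ is doomed in exactly the regime where the Claim is used. Your $h_{1},h_{2}$ are the paper's $g_{4},g_{5}$ from Subsection~\ref{subsec3}, and $R(h_{1},h_{2};w_{3})$, after dividing out $(a_{1}-w_{1})^{2}(a_{1}+w_{1})^{4}$, is exactly the polynomial $h$ of Subsection~\ref{subsec3}. But Claim~\ref{subdclaim6} is invoked there precisely in the branch where $h_{i}=0$ for all $0\le i\le 10$, i.e.\ $h$ is the zero polynomial. So the resultant you list first and treat as the source of $H$, with the $g_{0}$-based resultants as mere ``cross-checks,'' is identically zero in the only case that matters; the equation coming from $f_{7}$ is not a cross-check but the indispensable ingredient, and you would have to build $H$ out of $R(g_{0},h_{1};w_{3})$ or $R(g_{0},h_{2};w_{3})$. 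You have not computed these, not identified their spurious factors, not certified that the surviving polynomial is nonzero (the leading coefficient of $g_{0}$ in $w_{3}$ is itself a polynomial in $w_{1}$ that can vanish, so ``certified nonzero by $a_{3}\ne b_{3}$'' is not a substitute for checking), and not established that the degree count lands at or below $24$. Notice also that the hypothesis $a_{3}\ne b_{3}$ is used by the paper to guarantee $s_{7}\ne 0$ in the coefficient analysis of $s$, a step that has no analogue in your pipeline; you never actually use it. Until the MAGMA run you defer to is actually carried out along your route, the proposal is a sketch of a possible alternative computation, not a proof of the stated bound.
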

\begin{proof}
Let $f_{7}=a_{1}y_{1}-a_{1}z_{1}-b_{1}x_{1}+b_{1}z_{1}+c_{1}x_{1}-c_{1}y_{1}=0$, then we regard $f_{7}$ and $f_{i}$ ($i=1,2,\dots,6$) in Eqs. (\ref{subdeq1})-(\ref{subdeq6}) as polynomials with variables $x_{1},y_{1},z_{1},w_{1},w_{2},w_{3}$. Let $g_{1}=f_{1}-f_{3}$, $g_{2}=f_{1}-f_{5}$ and $g_{3}=f_{2}-f_{4}$. By the MAGMA Program 7 in the Appendix A, we can get that
\begin{align*}
&R(g_{1},g_{2};w_{1})=g_{5},\\
&R(g_{1},g_{3};w_{1})=g_{6}\cdot(x_{1}-y_{1}),\\
&R(f_{7},g_{5};x_{1})=h_{1}\cdot(y_{1}-z_{1}),\\
&R(f_{7},g_{6};x_{1})=h_{2},\\
&R(h_{1},h_{2};z_{1})=(b_{1}-c_{1})^{3}(a_{1}-b_{1})^{6}(a_{1}y_{1}^{2}-b_{1}y_{1}^{2}-a_{2}+b_{2})^{2}\cdot s,
\end{align*}
where $s$ is a polynomial of $y_{1}$ with degree $8$.

If $a_{1}y_{1}^{2}-b_{1}y_{1}^{2}-a_{2}+b_{2}=0$, by Eqs. (\ref{subdeq1}) and (\ref{subdeq3}), we have
 \begin{align*}
 0&=f_{1}-f_{3}\\
 &=a_{2}-b_{2}-x_{1}^{2}(a_{1}-w_{1})+y_{1}^{2}(b_{1}-w_{1})\\
 &=a_{2}-b_{2}-(x_{1}^{2}-y_{1}^{2})(a_{1}-w_{1})+y_{1}^{2}(b_{1}-a_{1}).
 \end{align*}
Then we have $(x_{1}^{2}-y_{1}^{2})(a_{1}-w_{1})=0$, which is a contradiction. Hence $a_{1}y_{1}^{2}-b_{1}y_{1}^{2}-a_{2}+b_{2}\ne0$.

 We write $s=\sum_{i=0}^{8}s_{i}y_{1}^{i}$, then we can compute to get that $s_{8}=s_{8}'\cdot(b_{1}-c_{1})^{3}(a_{1}-c_{1})^{4}(a_{1}-b_{1})^{4}$ and $s_{7}=s_{7}'\cdot(b_{1}-c_{1})^{3}(a_{3}-b_{3})(a_{1}-c_{1})^{3}(a_{1}-b_{1})^{3}$. We regard $s_{8}'$ and $s_{7}'$ as polynomials of $a_{1}$, then by Lemma~\ref{subdlemma3}, we have $R(s_{7}',s_{8}';a_{1})=(b_{1}+c_{1})(b_{1}+5c_{1})(b_{1}^{2}+b_{1}c_{1}+c_{1}^{2})\ne0$. Hence there are at most $8$ solutions for $y_{1}$. For any given $y_{1}$, $h_{1}$ is a polynomial of $z_{1}$ with degree 3. We write $h_{1}$ as $h_{1}=\sum_{i=0}^{3}s_{i}z_{1}^{i}$, then $s_{3}=(a_{1}-c_{1})(a_{1}-b_{1})^{2}\ne0$. Hence, there are at most 3 solutions for $z_{1}$. If $y_{1}$ and $z_{1}$ are given, then all the remaining variables are uniquely determined. Hence there are at most $24\ (=8\times3)$ different sequences of $(x,y,z,w)$ for this case.
\end{proof}

Now we regard $f_{i}$ ($i=1,2,\dots,6$) in Eqs. (\ref{subdeq1})-(\ref{subdeq6}) as polynomials with variables $x_{1},y_{1},z_{1},w_{1},\\w_{2},w_{3}$. By the MAGMA Program 8 in the Appendix A, we can get that
\begin{align*}
&R(f_{1},f_{2};x_{1})=g_{1}\cdot(a_{1}-w_{1}),\\
&R(f_{3},f_{4};y_{1})=g_{2}\cdot(b_{1}-w_{1}),\\
&R(f_{5},f_{6};z_{1})=g_{3}\cdot(c_{1}-w_{1}).
\end{align*}
By Lemma~\ref{subdlemma1}, we have $a_{1}\ne w_{1}$ and $c_{1}\ne w_{1}$. Then we can compute to get that
\begin{align*}
&R(g_{1},g_{2};w_{2})=g_{4},\\
&R(g_{1},g_{3};w_{2})=g_{5},\\
&R(g_{4},g_{5};w_{3})=h\cdot (a_{1}-w_{1})^{2}(a_{1}+w_{1})^{4},
\end{align*}
where $h$ is a polynomial of $w_{1}$ with degree $10$. By Lemma~\ref{subdlemma3}, $a_{1}+w_{1}\ne0$, then $h=0$. We can write $h$ as $h=\sum_{i=0}^{10}h_{i}w_{1}^{i}$.

 If at least one of $h_{i}$ is not 0, then there are at most 10 solutions for $w_{1}$. For any fixed $w_{1}$, $g_{4}$ and $g_{5}$ are polynomials of $w_{3}$ with degree 2. We write $g_{4}$ and $g_{5}$ as $g_{4}=\sum_{i=0}^{2}s_{i}w_{3}^{i}$ and $g_{5}=\sum_{i=0}^{2}t_{i}w_{3}^{i}$, then $s_{2}=s_{2}'\cdot(a_{1}-b_{1})$ and $t_{2}=t_{2}'\cdot(a_{1}-c_{1})$. We can compute to get that $s_{2}'-t_{2}'=(b_{1}-c_{1})(a_{1}+b_{1}+c_{1}+w_{1})$. By Lemmas~\ref{subdlemma3} and \ref{subdlemma1}, we have $s_{2}'-t_{2}'\ne0$. Hence there is at least one of $s_{2},t_{2}$ not being 0, then there are at most $2$ solutions for $w_{3}$. If $w_{1}$ and $w_{3}$ are given, then all the remaining variables are uniquely determined. Hence there are at most $20\ (=10\times2)$ different sequences of $(x,y,z,w)$ for this case.

If $h_{i}=0$ for $0\le i\le10$. We regard $h_{i}$ as polynomials with variables $b_{2},c_{2},c_{3}$.  We can compute to get that
\begin{align*}
h_{10}=(a_{1}b_{2}-a_{1}c_{2}-a_{2}b_{1}+a_{2}c_{1}+b_{1}c_{2}-b_{2}c_{1})^2.
\end{align*}
Let
$h_{10}'=a_{1}b_{2}-a_{1}c_{2}-a_{2}b_{1}+a_{2}c_{1}+b_{1}c_{2}-b_{2}c_{1}.$ By the MAGMA Program 8 in the Appendix A, we can get that
\begin{align*}
&R(h_{10}',h_{8};c_{2})=(a_{2}-b_{2})(a_{1}-b_{1})\cdot k_{1},\\
&R(h_{10}',h_{6};c_{2})=(a_{2}-b_{2})(a_{1}-b_{1})\cdot k_{2},\\
&R(k_{1},k_{2};c_{3})=(b_{1}-c_{1})^{4}(a_{1}-c_{1})^{4}(a_{1}-b_{1})^{4}\cdot r_{1}^{2},\\
&R(k_{1},k_{2};b_{2})=(b_{1}-c_{1})^{2}(a_{1}-c_{1})^{2}(a_{1}-b_{1})\cdot r_{2}^{2},
\end{align*}
where
\begin{align}
&r_{1}=a_{1}^{3}a_{2}-a_{1}^{3}b_{2}+a_{1}^{2}a_{2}b_{1}-a_{1}^{2}b_{1}b_{2}-a_{1}a_{2}b_{1}^{2}+a_{1}b_{1}^{2}b_{2}-a_{2}b_{1}^{3}-a_{3}^{2}+2a_{3}b_{3}+b_{1}^{3}b_{2}-b_{3}^{2},\label{subdeq13}\\
&r_{2}=a_{1}^{2}b_{3}-a_{1}^{2}c_{3}-a_{3}b_{1}^{2}+a_{3}c_{1}^{2}+b_{1}^{2}c_{3}-b_{3}c_{1}^{2}.\label{subdeq14}
\end{align}

We can also compute to get that
\begin{align*}
&R(h_{10}',h_{8};b_{2})=(a_{2}-c_{2})(a_{1}-c_{1})\cdot k_{3},\\
&R(h_{10}',h_{6};b_{2})=(a_{2}-c_{2})(a_{1}-c_{1})\cdot k_{4},\\
&R(k_{3},k_{4};c_{3})=(b_{1}-c_{1})^{4}(a_{1}-c_{1})^{2}(a_{1}-b_{1})^{4}\cdot r_{3}^{2},
\end{align*}
where
\begin{align}
r_{3}=a_{1}^{4}a_{2}-a_{1}^{4}c_{2}-2a_{1}^{2}a_{2}b_{1}^{2}+2a_{1}^{2}b_{1}^{2}c_{2}-a_{1}a_{3}^{2}+2a_{1}a_{3}b_{3}-a_{1}b_{3}^{2}+a_{2}b_{1}^{4}+a_{3}^{2}c_{1}-
2a_{3}b_{3}c_{1}-b_{1}^{4}c_{2}+b_{3}^{2}c_{1}.\label{subdeq15}
\end{align}

Now we define $d_{1}:=\frac{a_{3}-b_{3}}{a_{1}^{2}-b_{1}^{2}}$, $d_{2}:=a_{1}^{2}(\frac{a_{3}-b_{3}}{a_{1}^{2}-b_{1}^{2}})-a_{3}$, and $d_{3}:=a_{1}(\frac{a_{3}-b_{3}}{a_{1}^{2}-b_{1}^{2}})^{2}-a_{2}$. Then by $r_{1}=r_{2}=r_{3}=0$ (see Eqs. (\ref{subdeq13}), (\ref{subdeq14}) and (\ref{subdeq15})), it can be computed to get that
\begin{align*}
&a_{2}+d_{3}=a_{1}d_{1}^{2}, && a_{3}+d_{2}=a_{1}^{2}d_{1},\\
&b_{2}+d_{3}=b_{1}d_{1}^{2}, && b_{3}+d_{2}=b_{1}^{2}d_{1},\\
&c_{2}+d_{3}=c_{1}d_{1}^{2}, && c_{3}+d_{2}=c_{1}^{2}d_{1}.
\end{align*}

Then the vertex $d=(d_{1},d_{2},d_{3})$ is a common neighbour of $a,b,c$. Hence the vertices $d,a,b,c,x,\\y,z,w$ form a $\theta_{3,3}$, by Lemma~\ref{subdlemma2} and Claim \ref{subdclaim6}, there are at most $24$ different sequences of $(x,y,z,w)$ for this case.

\begin{remark}
Note that $d_{1}$ may not be in the set $S$, and then $d\not\in V$, but by Remark~\ref{subdrmk1}, we still have the same result.
\end{remark}

Therefore, if $a_{1}\ne b_{1}$, $a_{1}\ne c_{1}$, $b_{1}\ne c_{1}$, $a_{2}\ne b_{2}$, $a_{2}\ne c_{2}$ and $b_{2}\ne c_{2}$, then there are at most $24$ different sequences of $(x,y,z,w)$.
\subsection{Proof of Theorem~\ref{subdthm}}
From the previous discussions, for any given $a,b,c\in V$, there are at most $24$ different sequences of $(x,y,z,w)$ such that $ax,xw,by,yw,cz,zw$ are edges in $G_{p}$. Hence $G_{p}$ is $K_{3,25}'$-free.

\section{Constructions of $\Theta_{3,217}^{B}$-free linear hypergraphs}\label{sectiontheta}
In this section, we describe an algebraic construction of $3$-uniform linear hypergraph with no copy of $\Theta_{3,217}^{B}$ having $n$ vertices and $\Omega(n^{\frac{4}{3}})$ edges.

Let $p$ be a sufficiently large prime number, $\mathbb{F}_{p}$ be the finite field of order $p$. Let
\begin{align*}
&T_{1}=\{2,3,\dots,\frac{p-1}{2}\},\\
&T_{2}=\{\frac{p+3}{2},\dots,p-1\},\\
&T_{3}=\mathbb{F}_{p}\backslash\{-x^{2}:x\in\mathbb{F}_{p}\},\\
&T_{4}=\{x: x\in \mathbb{F}_{p}, x^{2}-4x+1=0\}\cup\{x: x\in \mathbb{F}_{p}, 3x-1=0\}\cup\{x: x\in \mathbb{F}_{p}, 3x-2=0\},\\
&T_{5}=\bigg\{x: x\in \mathbb{F}_{p}, x^5 - \frac{12757}{10872} x^4 + \frac{1123}{3624}x^3 + \frac{289}{1359}x^2 - \frac{49}{453}x -\frac{2}{151}=0\bigg\}.
 \end{align*}
Since $p$ is a sufficiently large prime, then the above definition of $T_{i}$ is well-defined.
 Note that $T_{1}\cup T_{2}\cup\{0,1,\frac{p+1}{2}\}=\mathbb{F}_{p}$ and $|T_{3}|=\frac{p-1}{2}$. Then there exists $i\in\{1,2\}$ such that $|T_{i}\cap T_{3}|\ge\frac{p-7}{4}$. Without loss of generality, we assume that $|T_{1}\cap T_{3}|\ge\frac{p-7}{4}$. Let $S_{1}=(T_{1}\cap T_{3})\backslash (T_{4}\cup T_{5})$, $S_{2}=\mathbb{F}_{p}\backslash\{0,1\}$. Since $|T_{4}|\le4$ and $|T_{5}|\le5$, then $|S_{1}|\ge\frac{p-43}{4}$.

Now we are ready to construct a $3$-partite $3$-uniform linear hypergraph as follows.
 For $1\le i\le 3$, let $V_{i}=S_{1}\times S_{2}\times S_{2}\times \{i\}$. The union $V_{1}\cup V_{2}\cup V_{3}$ will be the vertex set of our hypergraph. Given $x_{1},x_{2},x_{3}\in S_{1}$ and $a\in\mathbb{F}_{p}^{*}$, let
\begin{align*}
e(x_{1},x_{2},x_{3},a)=\{(x_{1},x_{2}x_{3}+a,x_{2}^{2}x_{3}+a,1),(x_{2},x_{3}x_{1}+a,x_{3}^{2}x_{1}+a,2),(x_{3},x_{1}x_{2}+a,x_{1}^{2}x_{2}+a,3)\}.
\end{align*}
\begin{definition}
We define $\mathcal{H}$ to be the $3$-uniform hypergraph with vertex set
\begin{align*}
V(\mathcal{H})=\{(b,c,d,i): b\in S_{1},c,d\in S_{2},1\le i\le3\}
\end{align*}
and edge set
\begin{align*}
E(\mathcal{H})=\{e(x_{1},x_{2},x_{3},a): e(x_{1},x_{2},x_{3},a)\subseteq V(\mathcal{H})\}.
\end{align*}
\end{definition}

\begin{proposition}
$\mathcal{H}$ is a linear hypergraph.
\end{proposition}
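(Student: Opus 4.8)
The plan is to show that any two distinct edges of $\mathcal{H}$ meet in at most one vertex. Since $\mathcal{H}$ is $3$-partite with parts $V_1,V_2,V_3$, and each edge $e(x_1,x_2,x_3,a)$ contains exactly one vertex in each $V_i$, two edges can only intersect within a common part. So it suffices to fix $i\in\{1,2,3\}$ and show that two edges $e(x_1,x_2,x_3,a)$ and $e(y_1,y_2,y_3,b)$ sharing their $V_i$-vertex must in fact be the same edge. By symmetry of the roles of the three coordinates, I would do the case $i=1$ in detail and remark that the other two cases are identical after a cyclic relabelling.

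First I would write out what it means for the two edges to share the vertex in $V_1$: from the first coordinate we get $x_1=y_1$, and from the second and third coordinates we get $x_2x_3+a = y_2y_3+b$ and $x_2^2x_3+a = y_2^2y_3+b$. Subtracting these two equations eliminates $a$ and $b$, yielding $x_2x_3 - x_2^2x_3 = y_2y_3 - y_2^2y_3$, i.e. $x_2x_3(1-x_2) = y_2y_3(1-y_2)$. This single relation alone is not enough to force $(x_2,x_3)=(y_2,y_3)$ — which is exactly why one must also use that the two edges are genuinely edges of $\mathcal{H}$, hence have all three vertices inside $V(\mathcal{H})$, and in particular $x_2,x_3,y_2,y_3\in S_1$ and the second/third coordinates lie in $S_2=\mathbb{F}_p\setminus\{0,1\}$. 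Actually the cleanest route: note $x_1=y_1$ already; now I claim two edges of $\mathcal{H}$ with the same $x_1$-value and which share a vertex must coincide. The key observation is that within an edge, the vertex in $V_1$ determines $x_1$, and then the pair $(x_2x_3+a,\;x_2^2x_3+a)$ together with knowledge of, say, the $V_2$-vertex or $V_3$-vertex pins down everything; but if the edges share \emph{only} the $V_1$-vertex we have less to work with. So I would instead argue directly from $x_1=y_1$ and $x_2x_3(1-x_2)=y_2y_3(1-y_2)$ combined with a second equation obtained differently.

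Here is the step I expect to be the crux. From $x_2x_3+a=y_2y_3+b$ and $x_2^2x_3+a=y_2^2y_3+b$, subtract to get $x_2x_3(x_2-1)=y_2y_3(y_2-1)$ (after rearranging), call this $(\star)$. I need one more independent equation, and since sharing a single vertex gives only the two scalar equations above (plus $x_1=y_1$), I should recall that $\mathcal{H}$'s edges are parametrised so that two distinct edges differing in the additive shift $a$ but with the same $(x_1,x_2,x_3)$ are \emph{translates}, hence disjoint; and two edges with different unordered triples $\{x_1,x_2,x_3\}$... — this is where I would carefully case-split. If $x_2=y_2$, then $(\star)$ gives $x_2x_3(x_2-1)=x_2y_3(x_2-1)$, and since $x_2\in S_1$ forces $x_2\neq 0$ and $x_2\neq 1$ (as $S_1\subseteq T_1$ and $1\notin T_1$), we may cancel to get $x_3=y_3$, and then $x_2x_3+a=y_2y_3+b$ forces $a=b$, so the edges are equal. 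If $x_2\neq y_2$, I would derive a contradiction with the fact that the edges only share the $V_1$-vertex — specifically, I would examine when the $V_2$- or $V_3$-vertices could \emph{also} accidentally coincide and rule it out, or more likely show that no further constraint is violated so that the two edges, sharing just one vertex, are allowed; hence ``linear'' follows as long as I have correctly shown a shared vertex in a \emph{single} part, with the parametrisation being injective enough. The main obstacle is confirming that the map $(x_1,x_2,x_3,a)\mapsto e(x_1,x_2,x_3,a)$ does not produce two different parameter tuples giving edges that overlap in two vertices; concretely, I must verify that $x_2x_3+a$ and $x_2^2x_3+a$ together recover $(x_2x_3,x_2^2x_3)$ up to the shift, hence $x_3$ and $x_2$ individually (via $x_2 = (x_2^2x_3)/(x_2x_3)$, valid since $x_2x_3\neq 0$ because $x_2,x_3\in S_1$ and $0\notin T_1$), so that knowing \emph{two} vertices of an edge recovers the whole parameter tuple — which is precisely the statement that any two edges sharing two vertices are identical, i.e. $\mathcal{H}$ is linear. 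I would present it in this last clean form: assume two edges share two vertices; those two vertices lie in two of the parts, say $V_1$ and $V_2$; reading off coordinates recovers $x_1,x_2$ and then $x_3$ and $a$, forcing the edges to be equal.
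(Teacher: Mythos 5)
Your final paragraph is the right argument and is essentially the paper's proof: assume two edges share two vertices, WLOG the ones in $V_1$ and $V_2$; then $x_1=y_1$ and $x_2=y_2$ from the first coordinates, and subtracting the two remaining $V_1$-coordinate equations gives $(x_2-x_2^2)x_3=(x_2-x_2^2)y_3$, which forces $x_3=y_3$ and then $a=b$ since $x_2\notin\{0,1\}$. That is exactly what the paper does.

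However, most of your text is spent on the wrong framing. Linearity means two distinct edges share \emph{at most one} vertex, so the object you must rule out is two edges sharing \emph{two} vertices; there is nothing to prove about two edges that share only their $V_1$-vertex (they are allowed to do so). Your long detour about edges meeting in a single part and the ensuing uncertainty about ``whether a contradiction should appear'' comes from chasing a statement that is not the definition. In addition, the intermediate claim that $x_2x_3+a$ and $x_2^2x_3+a$ recover $x_2$ via $x_2=(x_2^2x_3)/(x_2x_3)$ is not valid on its own: from those two quantities you only learn the difference $x_2x_3(x_2-1)$, not $x_2x_3$ or $x_2^2x_3$ individually, since $a$ is unknown. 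The reason the argument works is that the second shared vertex hands you $x_2$ (or the relevant $x_i$) directly from its first coordinate, and only then can you solve for $x_3$ and $a$ --- which is what your closing sentence, and the paper, actually use.
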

\begin{proof}
Let $x_{1},x_{2},x_{3},y_{1},y_{2},y_{3}\in S_{1}$ and $a,b\in\mathbb{F}_{p}^{*}$.
Assume $|e(x_{1},x_{2},x_{3},a)\cap e(y_{1},y_{2},y_{3},b)|=2$, without loss of generality, suppose that the first two vertices are the same. Then we have
\begin{align}
&x_{1}=y_{1},\label{tlheq1}\\
&x_{2}x_{3}+a=y_{2}y_{3}+b,\label{tlheq2}\\
&x_{2}^{2}x_{3}+a=y_{2}^{2}y_{3}+b,\label{tlheq3}\\
&x_{2}=y_{2},\label{tlheq4}\\
&x_{3}x_{1}+a=y_{3}y_{1}+b,\label{tlheq5}\\
&x_{3}^{2}x_{1}+a=y_{3}^{2}y_{1}+b.\label{tlheq6}
\end{align}

Substituting $x_{2}=y_{2}$ (Eq. (\ref{tlheq4})) into Eqs. (\ref{tlheq2}) and (\ref{tlheq3}), then computing Eq. (\ref{tlheq2})-Eq. (\ref{tlheq3}), we have
\[(x_{2}-x_{2}^2)x_{3}=(x_{2}-x_{2}^2)y_{3}.\]
Note that $x_{2}\neq0,1$, we get that $x_{3}=y_{3}$, and then $a=b$.
Hence $e(x_{1},x_{2},x_{3},a)=e(y_{1},y_{2},y_{3},b)$. Therefore $\mathcal{H}$ is linear.
\end{proof}

It is easy to see that the number of vertices of $\mathcal{H}$ is $n:=3|S_{1}|(p-2)^{2}$, and there are at least $|S_{1}|^{3}(p-13)=\Omega(n^{\frac{4}{3}})$ edges in $\mathcal{H}$. In the following of this section, we will prove that $\mathcal{H}$ is $\Theta_{3,217}^{B}$-free.

We call a \emph{Berge $3$-path} $v_{1},e_{1},v_{2},e_{2},v_{3},e_{3},v_{4}$ of type $(i_{1},i_{2},i_{3},i_{4})$ if $v_{j}\in V_{i_{j}}$ for $1\le j\le 4$, and a \emph{Berge $4$-cycle} $v_{1},e_{1},v_{2},e_{2},v_{3},e_{3},v_{4} ,e_{4},v_{1}$ of type $(i_{1},i_{2},i_{3},i_{4})$ if $v_{j}\in V_{i_{j}}$ for $1\le j\le 4$.

By symmetry, without loss of generality, we only need to consider three types of Berge $3$-paths: $(1,2,1,2)$-type, $(1,2,3,1)$-type and $(1,2,3,2)$-type. Indeed, we will upper bound the number of Berge 3-paths of the above three types for any given endpoints.
In the following, we divide our discussions into three subsections according to the types.

\subsection{$(1,2,1,2)$-type Berge $3$-paths}
We begin with the following lemma.
\begin{lemma}\label{thetalemma1}
There is no $(1,2,1,2)$-type Berge $4$-cycle in $\mathcal{H}$.
\end{lemma}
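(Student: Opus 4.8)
The plan is to argue by contradiction, mirroring the structure of the proof of Theorem~\ref{vw19}. Suppose $\mathcal{H}$ contains a $(1,2,1,2)$-type Berge $4$-cycle $v_1,e_1,v_2,e_2,v_3,e_3,v_4,e_4,v_1$ with $v_1,v_3\in V_1$ and $v_2,v_4\in V_2$. Each of the four hyperedges has the form $e(x_1,x_2,x_3,a)$, so it is determined by three ``slope'' parameters in $S_1$ together with one additive parameter in $\mathbb{F}_p^*$. First I would record, for each $e_j$, the coordinates of its vertex in $V_1$ and its vertex in $V_2$: if $e_j=e(x_1^{(j)},x_2^{(j)},x_3^{(j)},a^{(j)})$ then its $V_1$-vertex is $(x_1^{(j)},\,x_2^{(j)}x_3^{(j)}+a^{(j)},\,(x_2^{(j)})^2x_3^{(j)}+a^{(j)},1)$ and its $V_2$-vertex is $(x_2^{(j)},\,x_3^{(j)}x_1^{(j)}+a^{(j)},\,(x_3^{(j)})^2x_1^{(j)}+a^{(j)},2)$. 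Since $e_1,e_2$ share $v_2\in V_2$, $e_2,e_3$ share $v_3\in V_1$, $e_3,e_4$ share $v_4\in V_2$, and $e_4,e_1$ share $v_1\in V_1$, I get $8$ coordinate equalities among these parameters (the first coordinates force $x$-values to agree between consecutive edges, and the remaining two coordinates at each shared vertex give genuine equations).

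The next step is elimination. Using the first-coordinate equalities one identifies the relevant slopes across edges, so the whole configuration is governed by a handful of slope variables (say the three slopes of $e_1$ together with the one ``new'' slope introduced at each subsequent edge) and the four additive constants $a^{(1)},\dots,a^{(4)}$. Subtracting the ``$+a$'' terms in pairs eliminates the additive constants, leaving a small system of polynomial equations purely in the slope variables, all lying in $S_1$. I would then feed this system to the resultant machinery of Section~\ref{sectiotech} (Lemma~\ref{sublemma1}): eliminate one slope variable at a time by taking resultants, factor out the spurious factors that vanish only when two slopes coincide, and use Lemma~\ref{thetalemma1}-style reasoning — namely, that distinct vertices sharing an edge must have distinct first coordinates, analogous to Lemma~\ref{subdlemma1} — to discard those factors. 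Iterating, one arrives at a univariate polynomial identity that must vanish; analyzing its leading coefficient, I expect it to factor as a product of differences of slopes (possibly times a linear form like a sum of slopes), each of which is forced to be nonzero because all slopes lie in the restricted set $S_1\subseteq T_1$ of ``small'' residues — so sums and certain linear combinations cannot vanish mod $p$ — and because the exclusions defining $T_4, T_5$ kill precisely the extra bad factors that would otherwise arise. This contradiction establishes the lemma.

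The main obstacle I anticipate is twofold. First, the bookkeeping: unlike the octagon in Theorem~\ref{vw19} where a single symmetry was exploited, here the $4$-cycle alternates between $V_1$ and $V_2$, and the two vertex classes have \emph{different} defining expressions (the $V_1$-vertex of $e(x_1,x_2,x_3,a)$ uses the pair $(x_2,x_3)$ while the $V_2$-vertex uses $(x_3,x_1)$), so the resulting system is genuinely asymmetric and one must be careful about which slope gets identified with which across shared vertices. Second, and more seriously, controlling the final resultant: after eliminating several variables the degree will balloon (as the authors note, degree $40$ polynomials appear elsewhere in the paper), so this step is not doable by hand and relies on the MAGMA computation. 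The real content is checking that every irreducible factor of the terminal polynomial is either a difference of two slopes from $S_1$ (excluded by Lemma~\ref{subdlemma1}-type injectivity), or a sum/linear combination that cannot vanish because the slopes are small positive residues, or one of the explicitly excluded polynomials cut out by $T_4$ and $T_5$. Identifying that the quintic in the definition of $T_5$ and the quadratic/linear factors in $T_4$ are exactly what is needed — rather than something spurious — is where the delicate part of the argument lies, and it is presumably why those particular sets were defined the way they were.
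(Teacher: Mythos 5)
Your high-level plan (encode the four shared vertices as coordinate equalities, subtract to kill the additive constants $a^{(j)}$, then eliminate slope variables by iterated resultants and analyze the factors of the terminal polynomial) does match the paper's method. However, your description of \emph{why} the bad factors cannot vanish is off in a way that matters: you predict factors that are sums or more general linear combinations of slopes, to be killed by the ``small positive residue'' property of $S_1$, and you predict that the exclusions built into $T_4$ and $T_5$ are what neutralize the remaining spurious factors. Neither of these is what happens in this lemma. After eliminating $x_1,x_2,x_3$ (using $f_2,\dots,f_5$ of the paper), the terminal polynomial factors very simply as
\[
h = t_3^2\,z_3^2\,z_2^4\,y_3^2\,y_1^2\,(z_3-t_3)^2\,(z_2-1)^4\,(y_3-z_3)^2,
\]
so the only nontrivial factors are a single slope minus $1$, and two \emph{differences} of slopes. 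The first is ruled out simply because $1\notin S_1$ (indeed $0,1\notin S_2$ and $S_1\subseteq T_1$ starts at $2$). The two difference factors are ruled out not by any ``small residue'' arithmetic but by the combinatorics of the Berge cycle: $z_3=t_3$ (and similarly the chain triggered by $y_3=z_3$) forces two of the four hyperedges to coincide, contradicting that a Berge $4$-cycle has four distinct edges. (The $y_3=z_3$ case requires one further substitution back into $f_1$ and a short cascade of deductions, which you did not anticipate.) The sets $T_4$ and $T_5$, and the ``sums of small residues are nonzero'' trick, play no role here; $T_4,T_5$ are introduced for the $(1,2,3,1)$-type analysis in Section~\ref{sectiontheta}, and the small-residue sum argument is a Section~\ref{sectionsub} device for the $K_{3,25}'$ construction. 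So while your elimination scheme would lead you to the right place, the justification you sketch for the final step would not survive contact with the actual factorization, and you should replace it with the ``distinct edges of a Berge cycle'' argument.
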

\begin{proof}
Assume to the contrary, suppose $(u_{1},v_{1},w_{1},1),e(x_{1},x_{2},x_{3},a_{1}),(u_{2},v_{2},w_{2},2),e(y_{1},y_{2},y_{3},a_{2}),\\(u_{3},v_{3},w_{3},1),e(z_{1},z_{2},z_{3},a_{3}),(u_{4},v_{4},w_{4},2),e(t_{1},t_{2},t_{3},a_{4}),(u_{1},v_{1},w_{1},1)$ form a copy of Berge $4$-cycle. Then by the definition of hypergraph $\mathcal{H}$, we have
\begin{align}
&u_{1}=x_{1}=t_{1},\label{thetaeq1}\\
&v_{1}=x_{2}x_{3}+a_{1}=t_{2}t_{3}+a_{4},\label{thetaeq2}\\
&w_{1}=x_{2}^{2}x_{3}+a_{1}=t_{2}^{2}t_{3}+a_{4},\label{thetaeq3}\\
&u_{2}=x_{2}=y_{2},\label{thetaeq4}\\
&v_{2}=x_{3}x_{1}+a_{1}=y_{3}y_{1}+a_{2},\label{thetaeq5}\\
&w_{2}=x_{3}^{2}x_{1}+a_{1}=y_{3}^{2}y_{1}+a_{2},\label{thetaeq6}\\
&u_{3}=y_{1}=z_{1},\label{thetaeq7}\\
&v_{3}=y_{2}y_{3}+a_{2}=z_{2}z_{3}+a_{3},\label{thetaeq8}\\
&w_{3}=y_{2}^{2}y_{3}+a_{2}=z_{2}^{2}z_{3}+a_{3},\label{thetaeq9}\\
&u_{4}=z_{2}=t_{2},\label{thetaeq10}\\
&v_{4}=z_{3}z_{1}+a_{3}=t_{3}t_{1}+a_{4},\label{thetaeq11}\\
&w_{4}=z_{3}^{2}z_{1}+a_{3}=t_{3}^{2}t_{1}+a_{4}.\label{thetaeq12}
\end{align}
We can compute to get the following equations.
\begin{align*}
&f_1:=x_2x_3-x_3x_1+t_3x_1-z_2t_3+z_2z_3-z_3y_1+y_3y_1-x_2y_3=0,\\
&f_2:=x_2x_3-x_2^2x_3-z_2t_3+z_2^2t_3=0,\\
&f_3:=x_3x_1-x_3^2x_1-y_3y_1+y_3^2y_1=0,\\
&f_4:=z_3y_1-z_3^2y_1-t_3x_1+t_3^2x_1=0,\\
&f_5:=x_2y_3-x_2^2y_3-z_2z_3+z_2^2z_3=0,
\end{align*}
where $f_{1}$ is from Eqs. (\ref{thetaeq1}), (\ref{thetaeq2}), (\ref{thetaeq4}), (\ref{thetaeq5}), (\ref{thetaeq7}), (\ref{thetaeq8}), (\ref{thetaeq10}) and (\ref{thetaeq11}), $f_{2}$ is from Eqs. (\ref{thetaeq2}), (\ref{thetaeq3}) and (\ref{thetaeq10}), $f_{3}$ is from Eqs. (\ref{thetaeq5}) and (\ref{thetaeq6}), $f_{4}$ is from Eqs. (\ref{thetaeq1}), (\ref{thetaeq7}) (\ref{thetaeq11}) and (\ref{thetaeq12}), and $f_{5}$ is from Eqs. (\ref{thetaeq4}), (\ref{thetaeq8}) and (\ref{thetaeq9}).

Regarding $f_{i}$ as polynomials with variables $x_1,x_2,x_3,y_1,y_3,z_2,z_3,t_3$, we can compute to get the following polynomials.
\begin{align*}
 &g_{1}=R(f_{3},f_{4};x_{1}),\\
 &g_{2}=R(f_{2},f_{5};x_{2}),\\
 &h=R(g_{1},g_{2};x_{3}).
 \end{align*}
By the MAGMA Program 9 in the Appendix B, the polynomial $h$ can be factorized as
\begin{align*}
h=t_{3}^{2}z_{3}^{2}z_{2}^{4}y_{3}^{2}y_{1}^{2}(z_{3}-t_{3})^{2}(z_{2}-1)^{4}(y_{3}-z_{3})^{2}.
\end{align*}

Since $h$ is obtained from $f_{i}$, then $h$ must be $0$. Hence we have  $z_{2}=1$ or $z_{3}=t_{3}$ or $y_{3}=z_{3}$.

If $z_{2}=1$, then $u_{4}=1$, which contradicts the fact that $(u_{4},v_{4},w_{4},2)\in V_{2}$ by the definition of $S_{1}$.

If $z_{3}=t_{3}$, then from Eqs. (\ref{thetaeq11}) and (\ref{thetaeq12}), we have $z_{1}(z_{3}-z_{3}^{2})=t_{1}(z_{3}-z_{3}^{2})$. Note that $z_{3}-z_{3}^{2}\ne0$, then $z_{1}=t_{1}$, and so $a_{3}=a_{4}$. This leads to $e(z_{1},z_{2},z_{3},a_{3})=e(t_{1},t_{2},t_{3},a_{4})$, which is a contradiction.

If $y_{3}=z_{3}$, then from Eqs. (\ref{thetaeq8}) and (\ref{thetaeq9}), we have $y_{2}-y_{2}^{2}=z_{2}-z_{2}^{2}\ne0$. Since $x_{2}=y_{2}$ and $z_{2}=t_{2}$, then $x_{2}-x_{2}^{2}=t_{2}-t_{2}^{2}$. From Eqs. (\ref{thetaeq2}) and (\ref{thetaeq3}), we have $x_{3}=t_{3}$. Substituting the equations $x_{3}=t_{3}$ and $y_{3}=z_{3}$ to $f_{1}$, we get that
\begin{align*}
(x_{2}-z_{2})(x_{3}-y_{3})=0.
\end{align*}
If $x_{3}=y_{3}$, then we have $z_{3}=t_{3}$, which is impossible. If $x_{2}=z_{2}$, then $x_{2}=t_{2}$, and so $a_{1}=a_{4}$, this leads to $e(x_{1},x_{2},x_{3},a_{1})=e(t_{1},t_{2},t_{3},a_{4})$, which is a contradiction.
Hence there is no $(1,2,1,2)$-type Berge $4$-cycle in $\mathcal{H}$.
\end{proof}

Now we consider $(1,2,1,2)$-type Berge $3$-path.
For any given $(b_{1},c_{1},d_{1},1)\in V_{1},(b_{2},c_{2},d_{2},2)\in V_{2}$, suppose there exist $(u_{1},v_{1},w_{1},2)\in V_{2},(u_{2},v_{2},w_{2},1)\in V_{1}$ and $e(x_{1},x_{2},x_{3},a_{1}),e(y_{1},y_{2},y_{3},a_{2}),\\e(z_{1},z_{2},z_{3},a_{3})$ such that $(b_{1},c_{1},d_{1},1),e(x_{1},x_{2},x_{3},a_{1}),(u_{1},v_{1},w_{1},2),e(y_{1},y_{2},y_{3},a_{2}),(u_{2},v_{2},w_{2},1),\\e(z_{1},z_{2},z_{3},a_{3}),(b_{2},c_{2},d_{2},2)$ form a copy of Berge $3$-path.

Then by the definition of hypergraph $\mathcal{H}$, we have
\begin{align}
&b_{1}=x_{1},\label{tlheq7}\\
&c_{1}=x_{2}x_{3}+a_{1},\label{tlheq8}\\
&d_{1}=x_{2}^{2}x_{3}+a_{1},\label{tlheq9}\\
&u_{1}=x_{2}=y_{2},\label{tlheq10}\\
&v_{1}=x_{3}x_{1}+a_{1}=y_{3}y_{1}+a_{2},\label{tlheq11}\\
&w_{1}=x_{3}^{2}x_{1}+a_{1}=y_{3}^{2}y_{1}+a_{2},\label{tlheq12}\\
&b_{2}=z_{2},\label{tlheq13}\\
&c_{2}=z_{3}z_{1}+a_{3},\label{tlheq14}\\
&d_{2}=z_{3}^{2}z_{1}+a_{3},\label{tlheq15}\\
&u_{2}=y_{1}=z_{1},\label{tlheq16}\\
&v_{2}=y_{2}y_{3}+a_{2}=z_{2}z_{3}+a_{3},\label{tlheq17}\\
&w_{2}=y_{2}^{2}y_{3}+a_{2}=z_{2}^{2}z_{3}+a_{3}.\label{tlheq18}
\end{align}
Then we have the following equations.
\begin{align*}
&f_1:=x_2x_3+a_1-c_1=0,\\
&f_2:=x_2^2x_3+a_1-d_1=0,\\
&f_3:=z_3y_1+a_3-c_2=0,\\
&f_4:=z_3^2y_1+a_3-d_2=0,\\
&f_5:=x_3b_1+a_1-y_3y_1-a_2=0,\\
&f_6:=x_3^2b_1+a_1-y_3^2y_1-a_2=0,\\
&f_7:=x_2y_3+a_2-b_2z_3-a_3=0,\\
&f_8:=x_2^2y_3+a_2-b_2^2z_3-a_3=0,
\end{align*}
where $f_{1}$ is from Eq. (\ref{tlheq8}), $f_{2}$ is from Eq. (\ref{tlheq9}), $f_{3}$ is from Eqs. (\ref{tlheq14}) and (\ref{tlheq16}), $f_{4}$ is from Eqs. (\ref{tlheq15}) and (\ref{tlheq16}), $f_{5}$ is from Eqs. (\ref{tlheq7}) and (\ref{tlheq11}), $f_{6}$ is from Eqs. (\ref{tlheq7}) and (\ref{tlheq12}), $f_{7}$ is from Eqs. (\ref{tlheq10}), (\ref{tlheq13}) and (\ref{tlheq17}), $f_{8}$ is from Eqs. (\ref{tlheq10}), (\ref{tlheq13}) and (\ref{tlheq18}).

\begin{lemma}\label{thetalemma2}\
\begin{itemize}
  \item [(1)] $b_{i},c_{i},d_{i},u_{i},v_{i},w_{i},x_{j},y_{j},z_{j}\not\in\{0,1\}$ for $1\le i\le2$ and $1\le j\le 3$,
  \item [(2)] $c_{1}\ne d_{1}$, $x_{2}\ne b_{2}$, $c_{2}\ne d_{2}$,
  \item [(3)] $x_{2}+b_{2}\ne1$, $x_{2}^{2}-x_{2}+c_{1}-d_{1}\ne0$, $b_{1}+b_{2}^{2}\ne0$.
\end{itemize}
\end{lemma}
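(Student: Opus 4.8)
The plan is to verify each of the three items essentially by tracking membership in the sets $S_1$ and $S_2$ and by propagating a handful of linear/quadratic relations coming from Eqs.~\eqref{tlheq7}--\eqref{tlheq18}. For item (1), recall that by construction every coordinate $b_i, x_j, y_j, z_j$ lies in $S_1 \subseteq T_1 \subseteq \{2,3,\dots,\frac{p-1}{2}\}$, which contains neither $0$ nor $1$, and every coordinate $c_i, d_i$ of a vertex lies in $S_2 = \mathbb{F}_p \setminus \{0,1\}$; the same holds for $v_i, w_i$ since these are also second/third coordinates of vertices of $\mathcal{H}$, and $u_i = x_2$ or $u_i = z_2$ is a first coordinate, hence in $S_1$. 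So (1) is immediate from the definitions of $S_1, S_2$ and the fact that all vertices involved lie in $V(\mathcal{H})$.

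For item (2), I would argue by contradiction in each case. If $c_1 = d_1$, then subtracting $f_1$ from $f_2$ (i.e. Eq.~\eqref{tlheq8} from Eq.~\eqref{tlheq9}) gives $(x_2 - x_2^2)x_3 = 0$; since $x_2 \notin \{0,1\}$ by (1), we get $x_3 = 0$, contradicting (1). If $c_2 = d_2$, the same manipulation on $f_3, f_4$ gives $(z_3 - z_3^2)y_1 = 0$, and since $z_3 \notin \{0,1\}$ we get $y_1 = 0$, again contradicting (1). If $x_2 = b_2$, then using $x_2 = y_2$ (Eq.~\eqref{tlheq10}) and $b_2 = z_2$ (Eq.~\eqref{tlheq13}) we have $y_2 = z_2$; subtracting Eq.~\eqref{tlheq17} from Eq.~\eqref{tlheq18} yields $(y_2 - y_2^2)y_3 = (z_2 - z_2^2)z_3$, hence $(y_2 - y_2^2)(y_3 - z_3) = 0$, and since $y_2 \notin \{0,1\}$ we obtain $y_3 = z_3$; then Eq.~\eqref{tlheq16} and comparing Eqs.~\eqref{tlheq17}/\eqref{tlheq14} forces $a_2 = a_3$ and then $e(y_1,y_2,y_3,a_2) = e(z_1,z_2,z_3,a_3)$, which is impossible as these are consecutive distinct edges of the Berge $3$-path (or more precisely, it would collapse the path). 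This last sub-argument is the one that needs the most care, since one must be sure the two edges are genuinely required to be distinct in the definition of a Berge path.

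For item (3), each inequality should again reduce to a property of $S_1$ or $S_2$. The claim $b_1 + b_2^2 \neq 0$ holds because $b_1 \in S_1 \subseteq T_3 = \mathbb{F}_p \setminus \{-x^2 : x \in \mathbb{F}_p\}$, so $b_1$ is not of the form $-b_2^2$; this is exactly why $T_3$ was built into $S_1$. The claim $x_2^2 - x_2 + c_1 - d_1 \neq 0$: from $f_1, f_2$ we have $c_1 - d_1 = x_2 x_3 - x_2^2 x_3 = x_3(x_2 - x_2^2)$, so $x_2^2 - x_2 + c_1 - d_1 = (x_2^2 - x_2)(1 - x_3)$, and both factors are nonzero by (1). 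Finally $x_2 + b_2 \neq 1$: I expect this to follow either from a similar substitution identity (likely $x_2 + b_2 = 1$ would force some coordinate into $\{0,1\}$ through the cycle-type relations of Lemma~\ref{thetalemma1}), or directly from the numerical ranges—since $x_2, b_2 \in T_1 = \{2,\dots,\frac{p-1}{2}\}$, their sum lies in $\{4,\dots,p-1\}$ and cannot equal $1$ in $\mathbb{F}_p$. The main obstacle overall is bookkeeping: making sure that in item (2) the edge-equality contradictions are genuinely forbidden by the Berge-path structure, and confirming that the seemingly ad hoc inequalities in (3) are exactly the ones encoded by $T_3, T_4, T_5$ in the construction, so that no additional case analysis is needed.
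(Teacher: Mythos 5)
Your proof is correct and follows essentially the same route as the paper's: the paper dismisses most of the items as "immediately from the definition of $\mathcal{H}$" and spells out only the inequality $x_2^2-x_2+c_1-d_1\ne 0$ via the identity $c_1-d_1=(x_2-x_2^2)x_3$, which is precisely your computation. Your more detailed verifications — the range argument $x_2+b_2\in\{4,\dots,p-1\}$ from $S_1\subseteq T_1$, the use of $T_3$ for $b_1+b_2^2\ne 0$, and the edge-collapse argument showing $x_2=b_2$ forces $e(y_1,y_2,y_3,a_2)=e(z_1,z_2,z_3,a_3)$ — are all sound and merely unpack what the paper leaves implicit.
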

\begin{proof}
All the statements are immediately from the definition of $\mathcal{H}$. We will only prove $x_{2}^{2}-x_{2}+c_{1}-d_{1}\ne0$. If $x_{2}^{2}-x_{2}+c_{1}-d_{1}=0$, then by the equations $f_{1}$ and $f_{2}$, we have $c_{1}-d_{1}=(x_{2}-x_{2}^{2})x_{3}$, then $x_{3}=1$, which is a contradiction.
\end{proof}
\begin{remark}\label{thetarmk1}
When we consider the resultant of two polynomials, we can divide those nonzero factors first.
\end{remark}

Now we regard $f_{i}$ as polynomials with variables $x_2,x_3,y_1,y_3,z_3,a_1,a_2,a_3$. We can compute to get the following polynomials.
 \begin{align*}
 &g_{1}=R(f_{1},f_{2};a_{1}),&&g_{2}=R(f_{5},f_{6};a_{1}),\\
 &g_{3}=R(f_{1},f_{5};a_{1}),&&g_{4}=R(f_{3},f_{4};a_{3}),\\
 &g_{5}=R(f_{7},f_{8};a_{3}),&&g_{6}=R(f_{3},f_{7};a_{3}),\\
 &g_{7}=R(g_{3},g_{6};a_{2}),&&g_{8}=R(g_{1},g_{2};x_{3}),\\
 &g_{9}=R(g_{1},g_{7};x_{3}),& &g_{10}=R(g_{4},g_{5};z_{3}),\\
 &g_{11}=R(g_{5},g_{9};z_{3}),&&g_{12}=R(g_{8},g_{10};y_{1}),\\
 &g_{13}=R(g_{8},g_{11};y_{1}).
 \end{align*}
 By the MAGMA Program 10 in the Appendix B, the polynomials $g_{12}$ and $g_{13}$ can be factorized as
\begin{align*}
&g_{12}=y_{3}x_{2}(x_{2}-1)g_{14},\\
&g_{13}=y_{3}x_{2}(x_{2}-1)g_{15}.
\end{align*}
Finally, we can get that
\begin{align*}
R(g_{14},g_{15};y_{3})=b_{1}x_{2}^{2}(c_{1}-d_{1})(x_{2}-1)^{2}(x_{2}-b_{2})(x_{2}+b_{2}-1)(x_{2}^{2}-x_{2}+c_{1}-d_{1})g_{16},
\end{align*}
where $g_{16}$ is a polynomial of $x_{2}$ with degree $4$.

By Lemma~\ref{thetalemma2}, we have $b_{1}x_{2}^{2}(c_{1}-d_{1})(x_{2}-1)^{2}(x_{2}-b_{2})(x_{2}+b_{2}-1)(x_{2}^{2}-x_{2}+c_{1}-d_{1})\ne0$. Since $g_{16}$ is obtained from $f_{i}$, then $g_{16}=0$.
Write $g_{16}$ as $g_{16}=h_{4}x_{2}^{4}+h_{3}x_{2}^{3}+h_{2}x_{2}^{2}+h_{1}x_{2}+h_{0}$. We claim that $h_{4},h_{3},h_{2},h_{1},h_{0}$ cannot be all $0$. Assume to the contrary, we regard $h_{i}$ as polynomials with variables $b_{1},c_{1},d_{1},b_{2},c_{2},d_{2}$. $h_{0}$ can be factorized as follows.
\begin{align*}h_{0}=&(-b_1c_1^2 + 2b_1c_1d_1 - b_1c_1b_2^2 + b_1c_1b_2 - b_1d_1^2 + b_1d_1b_2^2 - b_1d_1b_2 - b_2^4c_2 + b_2^4d_2 + 2b_2^3c_2 -2b_2^3d_2 -\\
 &b_2^2c_2 + b_2^2d_2)\cdot h_{5}.
\end{align*}

\begin{claim}\label{claim:g17}
    $g_{17}:=-b_1c_1^2 + 2b_1c_1d_1 - b_1c_1b_2^2 + b_1c_1b_2 - b_1d_1^2 + b_1d_1b_2^2 - b_1d_1b_2 - b_2^4c_2 + b_2^4d_2 + 2b_2^3c_2 -2b_2^3d_2 - b_2^2c_2 + b_2^2d_2\ne 0.$
\end{claim}
\begin{proof}[Proof of Claim~\ref{claim:g17}]
If $g_{17}=0$, by the MAGMA Program 11 in the Appendix B, we have
\begin{align*}
R(g_{17},h_{1};d_{2})=&b_{2}^{3}(b_{2}-1)^{3}(c_{1}-d_{1})b_{1}(b_{1}c_{1}-b_{1}d_{1}-c_{1}b_{2}^{2}+d_{1}b_{2}+b_{2}^{2}c_{2}-b_{2}c_{2}),\\
R(g_{17},h_{1};c_{2})=&b_{2}^{4}(b_{2}-1)^{4}(c_{1}-d_{1})b_{1}(b_{1}c_{1}^{2}-2b_{1}c_{1}d_{1}+b_{1}d_{1}^{2}+c_{1}b_{2}^{4}-c_{1}b_{2}^{3}-d_{1}b_{2}^{3}+d_{1}b_{2}^{2}-b_{2}^{4}d_{2}\\
&+2b_{2}^{3}d_{2}-b_{2}^{2}d_{2}).
\end{align*}
Hence
\begin{align*}
&b_{1}c_{1}-b_{1}d_{1}-c_{1}b_{2}^{2}+d_{1}b_{2}+b_{2}^{2}c_{2}-b_{2}c_{2}=0,\\
&b_{1}c_{1}^{2}-2b_{1}c_{1}d_{1}+b_{1}d_{1}^{2}+c_{1}b_{2}^{4}-c_{1}b_{2}^{3}-d_{1}b_{2}^{3}+d_{1}b_{2}^{2}-b_{2}^{4}d_{2}+2b_{2}^{3}d_{2}-b_{2}^{2}d_{2}=0.
\end{align*}
Let $t_{1}=b_{1},t_{2}=b_{2},t_{3}=\frac{c_{1}-d_{1}}{b_{2}-b_{2}^{2}},a_{4}=\frac{d_{1}-b_{2}c_{1}}{1-b_{2}}$. Then it is easy to get that $(b_{1},c_{1},d_{1},1),(b_{2},c_{2},d_{2},2)\in e(t_{1},t_{2},t_{3},a_{4})$. Hence there exists a $(1,2,1,2)$-type Berge $4$-cycle in $\mathcal{H}$, which contradicts Lemma~\ref{thetalemma1}. This completes the proof.
\end{proof}

Now we can compute to get that
\begin{align*}
R(h_{1},h_{5};c_{1})=b_{2}^{6}b_{1}^{3}(c_{2}-d_{2})^{2}(b_{2}-1)^{6}(b_{1}+b_{2}^{2})^{2}.
\end{align*}
By Lemma~\ref{thetalemma2}, we have $R(h_{1},h_{5};c_{1})\ne0$, which is a contradiction. Hence $h_{4},h_{3},h_{2},h_{1},h_{0}$ cannot be all $0$. Note that $g_{16}=h_{4}x_{2}^{4}+h_{3}x_{2}^{3}+h_{2}x_{2}^{2}+h_{1}x_{2}+h_{0}$, then there are at most $4$ solutions for $x_{2}$.

Now for any fixed $x_{2}$, we consider the polynomial $g_{15}$, which is a polynomial of $y_{3}$ with degree $2$. We write $g_{15}$ as $g_{15}=\sum_{i=0}^{2}l_{i}y_{3}^{i}$, where $l_{2}=b_{2}x_{2}^{3}(x_{2}-1)(x_{2}-b_{2})\ne0$. Hence there are at most $2$ solution for $y_{3}$ when $x_{2}$ is given. If $x_{2}$ and $y_{3}$ are given, then all the remaining variables are uniquely determined.

Hence, for any given $(b_{1},c_{1},d_{1},1)\in V_{1},(b_{2},c_{2},d_{2},2)\in V_{2}$, there are at most $4\times2=8$ Berge $3$-paths of $(1,2,1,2)$-type with $(b_{1},c_{1},d_{1},1),(b_{2},c_{2},d_{2},2)$ being its end core vertices.

\subsection{$(1,2,3,1)$-type Berge $3$-paths}

For any given $(b_{1},c_{1},d_{1},1),(b_{2},c_{2},d_{2},1)\in V_{1}$, suppose there exist $(u_{1},v_{1},w_{1},2)\in V_{2}$ and $(u_{2},v_{2},w_{2},3)\in V_{3}$, and $e(x_{1},x_{2},x_{3},a_{1})$, $e(y_{1},y_{2},y_{3},a_{2})$ and $e(z_{1},z_{2},z_{3},a_{3})$ such that $(b_{1},c_{1},d_{1},1),\\e(x_{1},x_{2},x_{3},a_{1}),(u_{1},v_{1},w_{1},2),e(y_{1},y_{2},y_{3},a_{2}),(u_{2},v_{2},w_{2},3),e(z_{1},z_{2},z_{3},a_{3}),(b_{2},c_{2},d_{2},1)$ form a copy of Berge $3$-path. Then by the definition of hypergraph $\mathcal{H}$, we have
\begin{align}
&b_{1}=x_{1},\label{tlheq19}\\
&c_{1}=x_{2}x_{3}+a_{1},\label{tlheq20}\\
&d_{1}=x_{2}^{2}x_{3}+a_{1},\label{tlheq21}\\
&u_{1}=x_{2}=y_{2},\label{tlheq22}\\
&v_{1}=x_{3}x_{1}+a_{1}=y_{3}y_{1}+a_{2},\label{tlheq23}\\
&w_{1}=x_{3}^{2}x_{1}+a_{1}=y_{3}^{2}y_{1}+a_{2},\label{tlheq24}\\
&b_{2}=z_{1},\label{tlheq25}\\
&c_{2}=z_{2}y_{3}+a_{3},\label{tlheq26}\\
&d_{2}=z_{2}^{2}y_{3}+a_{3},\label{tlheq27}\\
&u_{2}=y_{3}=z_{3},\label{tlheq28}\\
&v_{2}=y_{1}y_{2}+a_{2}=z_{1}z_{2}+a_{3},\label{tlheq29}\\
&w_{2}=y_{1}^{2}y_{2}+a_{2}=z_{1}^{2}z_{2}+a_{3}.\label{tlheq30}
\end{align}
Then we can get the following equations.
\begin{align}
&f_1:=x_2x_3+a_1-c_1=0,\label{thetaequation1}\\
&f_2:=x_2^2x_3+a_1-d_1=0,\\
&f_3:=z_2y_3+a_3-c_2=0,\\
&f_4:=z_2^2y_3+a_3-d_2=0,\\
&f_5:=x_3b_1+a_1-y_3y_1-a_2=0,\\
&f_6:=x_3^2b_1+a_1-y_3^2y_1-a_2=0,\\
&f_7:=y_1x_2+a_2-b_2z_2-a_3=0,\\
&f_8:=y_1^2x_2+a_2-b_2^2z_2-a_3=0,\label{thetaequation2}
\end{align}
where $f_{1}$ is from Eq. (\ref{tlheq20}), $f_{2}$ is from Eq. (\ref{tlheq21}), $f_{3}$ is from Eq. (\ref{tlheq26}), $f_{4}$ is from Eq. (\ref{tlheq27}), $f_{5}$ is from Eqs. (\ref{tlheq19}) and (\ref{tlheq23}), $f_{6}$ is from Eqs. (\ref{tlheq19}) and (\ref{tlheq24}), $f_{7}$ is from Eqs. (\ref{tlheq22}), (\ref{tlheq25}) and (\ref{tlheq29}), $f_{8}$ is from Eqs. (\ref{tlheq22}), (\ref{tlheq25}) and (\ref{tlheq30}).

\begin{lemma}\
\begin{itemize}
  \item [(1)]  $b_{i},c_{i},d_{i},u_{i},v_{i},w_{i},x_{j},y_{j},z_{j}\not\in\{0,1\}$ for $1\le i\le2$ and $1\le j\le 3$,
  \item [(2)] $c_{1}\ne d_{1}$, $c_{2}\ne d_{2}$, $b_{1}+b_{2}\ne1$,
  \item [(3)] $b_{1}^{2}-4b_{1}+1\ne0$, $2b_{2}\ne1$, $3b_{2}\ne1$,
  \item [(4)] $b_2^5 - \frac{12757}{10872}b_2^4 + \frac{1123}{3624}b_2^3 + \frac{289}{1359}b_2^2 - \frac{49}{453}b_2 -\frac{2}{151}\ne0$.
\end{itemize}
\end{lemma}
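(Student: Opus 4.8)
The plan is to verify the four items in turn; each will be a direct consequence of how the index sets $S_1,S_2,T_1,\dots,T_5$ and the vertex classes $V_i$ were chosen, together with the polynomial relations recorded in Eqs.~(\ref{thetaequation1})--(\ref{thetaequation2}). \emph{Item (1).} By construction of $\mathcal{H}$, each of $b_i$ and $u_i$ is a first coordinate of a vertex of some $V_j$, hence lies in $S_1\subseteq T_1=\{2,3,\dots,\tfrac{p-1}{2}\}$, and each of $c_i,d_i,v_i,w_i$ is a second or third coordinate, hence lies in $S_2=\mathbb{F}_p\setminus\{0,1\}$; in either case the value is neither $0$ nor $1$. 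Likewise each $x_j,y_j,z_j$ is a first parameter of some edge label $e(\cdot,\cdot,\cdot,\cdot)$, hence lies in $S_1$, so again is not $0$ or $1$.

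\emph{Item (2).} Subtracting $f_2$ from $f_1$ gives $c_1-d_1=x_2x_3(1-x_2)$, which is nonzero because $x_2,x_3\in S_1$ and $x_2\ne 1$ by Item (1); hence $c_1\ne d_1$. Subtracting $f_4$ from $f_3$ gives $c_2-d_2=z_2y_3(1-z_2)$, nonzero by the same reasoning, so $c_2\ne d_2$. For $b_1+b_2\ne 1$: here $b_1=x_1$ and $b_2=z_1$ both lie in $S_1\subseteq T_1$, so their integer representatives in $\{2,\dots,\tfrac{p-1}{2}\}$ satisfy $4\le b_1+b_2\le p-1$, whence $b_1+b_2\not\equiv 1\pmod p$.

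\emph{Items (3) and (4).} The inequality $2b_2\ne 1$ again follows from $b_2\in T_1$, since then $2b_2$ is an integer in $\{4,6,\dots,p-1\}$, none of which is congruent to $1$ modulo $p$. The remaining assertions are precisely the exceptional loci that were removed when we set $S_1=(T_1\cap T_3)\setminus(T_4\cup T_5)$: $T_4$ contains all roots of $x^2-4x+1$ and of $3x-1$ in $\mathbb{F}_p$, and $T_5$ is exactly the zero set of $x^5-\tfrac{12757}{10872}x^4+\tfrac{1123}{3624}x^3+\tfrac{289}{1359}x^2-\tfrac{49}{453}x-\tfrac{2}{151}$. Since $b_1,b_2\in S_1$ are disjoint from $T_4\cup T_5$, none of these polynomials can vanish at $b_1$ or $b_2$, which yields $b_1^2-4b_1+1\ne 0$, $3b_2\ne 1$, and the quintic inequality of Item (4).

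The lemma is therefore routine bookkeeping, and no single step is a real obstacle; the only non-mechanical point lies upstream, namely having chosen $T_4$ and $T_5$ correctly in the first place. The linear and cubic factors collected in $T_4$ and the quintic defining $T_5$ are exactly the polynomial factors in the base-vertex coordinates $b_1,b_2$ (together with the $c_i,d_i$) that will later surface as leading or constant coefficients, and that must be certified nonzero, when the resultant elimination is carried out on $f_1,\dots,f_8$ in the remainder of this subsection. In practice I would confirm with MAGMA that the factors produced by that elimination are precisely the ones excised in $S_1$, so that the present lemma supplies exactly the nonvanishing hypotheses the later computation needs.
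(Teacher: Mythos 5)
Your proof is correct and takes essentially the same approach as the paper, whose entire proof of this lemma is the single sentence ``All the statements are immediately from the definition of $\mathcal{H}$.'' You have merely filled in the routine bookkeeping: coordinate membership in $S_1$ (hence $T_1$, excluding $0,1$) or $S_2=\mathbb{F}_p\setminus\{0,1\}$ for item (1), the identities $c_1-d_1=x_2x_3(1-x_2)$ and $c_2-d_2=z_2y_3(1-z_2)$ coming from $f_1-f_2$ and $f_3-f_4$ plus the range of $T_1$ for item (2), and the explicit excision of $T_4\cup T_5$ from $S_1$ (together with $\tfrac{p+1}{2}\notin T_1$ for $2b_2\ne 1$) for items (3) and (4).
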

\begin{proof}
All the statements are immediately from the definition of $\mathcal{H}$.
\end{proof}

Now we regard $f_{i}$ as polynomials with variables $x_2,x_3,y_1,y_3,z_2,a_1,a_2,a_3$. We can get the following polynomials.
\begin{align*}
&g_{1}=R(f_{1},f_{2};a_{1}),&&g_{2}=R(f_{5},f_{6};a_{1}),\\
&g_{3}=R(f_{1},f_{5};a_{1}),&&g_{4}=R(f_{3},f_{4};a_{3}),\\
&g_{5}=R(f_{7},f_{8};a_{3}),&&g_{6}=R(f_{3},f_{7};a_{3}),\\
&g_{7}=R(g_{3},g_{6};a_{2}),&&g_{8}=R(g_{1},g_{2};x_{3}),\\
&g_{9}=R(g_{1},g_{7};x_{3}),&&g_{10}=R(g_{4},g_{5};z_{2}),\\
&g_{11}=R(g_{5},g_{9};z_{2}),&&g_{12}=R(g_{10},g_{11};y_{3}),\\
&g_{13}=R(g_{8},g_{11};y_{3}).
\end{align*}
 By the MAGMA Program 12 in the Appendix B, the polynomials $g_{12}$ and $g_{13}$ can be factorized as
\begin{align*}
&g_{12}=b_2y_1x_2\cdot g_{14},\\
&g_{13}=y_1x_2^2(x_2-1)^2\cdot g_{15}.
\end{align*}
Finally, we can compute to get that
\begin{align}\label{thetaeq13}
g_{16}:=R(g_{14},g_{15};y_{1})=b_2^5(b_2-1)^8x_2^9(x_2-1)\cdot g_{17}^{2}g_{18},
\end{align}
where $g_{17}=x_2^3 + x_2^2c_1 + x_2^2b_2^2 - 2x_2^2b_2 - x_2^2c_2 - x_2^2 - x_2d_1 - x_2b_2^3 +2x_2b_2 + x_2c_2 - b_1c_1 + b_1d_1 + b_2^3 - b_2^2$ and $g_{18}$ is a polynomial of $x_2$ with degree $24$. We write $g_{18}$ as $g_{18}=\sum_{i=0}^{24}h_{i}x_{2}^{i}$. We claim that $h_{i}$ $(0\le i\le24)$ cannot be all $0$, otherwise, we can take 6 coefficients such as
\begin{align*}
&h_0=b_2^2(b_2-1)^2(c_1-d_1)^{10}b_{1}^{5}\cdot k_{0},\\
&h_{1}=b_{2}(b_{2}-1)(c_{1}-d_{1})^{9}b_{1}^{5}\cdot k_{1},\\
&h_{2}=(c_{1}-d_{1})^{8}b_{1}^{4}\cdot k_{2},\\
&h_{3}=(c_{1}-d_{1})^{7}b_{1}^{4}\cdot k_{3},\\
&h_{4}=(c_{1}-d_{1})^{6}b_{1}^{3}\cdot k_{4},\\
&h_{24}=(c_2-d_2)b_2^4(b_2-1)^2\cdot k_{5}.
\end{align*}
Now we regard $k_{i}$ $(i=0,1,2,3,4,5)$ as the polynomials with variables $b_{1},b_{2},c_{1},c_{2},d_{1},d_{2}$. Then we can get the following polynomials.
\begin{align*}
&R(k_{0},k_{1};d_{1})=(c_{2}-d_{2})b_{2}^{2}(b_{2}-1)^{2}\cdot k_{6},\\
&R(k_{0},k_{5};d_{1})=k_{7},\\
&R(k_{0},k_{2};d_{1})=(c_{2}-d_{2})b_{2}^{3}(b_{2}-1)^{3}b_{1}\cdot k_{8},\\
&R(k_{0},k_{3};d_{1})=(c_{2}-d_{2})b_{2}^{2}(b_{2}-1)^{3}\cdot k_{9},\\
&R(k_{0},k_{4};d_{1})=(c_{2}-d_{2})b_{2}^{3}(b_{2}-1)^{3}b_{1}\cdot k_{10},\\
&R(k_{6},k_{7};d_{2})=(c_{1}-c_{2})\cdot k_{11},\\
&R(k_{6},k_{8};d_{2})=(c_{1}-c_{2})b_{1}^{2}(b_{1}-1)^{2}(b_{1}-b_{2})(b_{1}+b_{2}-1)\cdot k_{12},\\
&R(k_{6},k_{9};d_{2})=(c_{1}-c_{2})b_{1}^{4}(b_{1}-1)^{3}(b_{1}-b_{2})(b_{1}+b_{2}-1)\cdot k_{13},\\
&R(k_{6},k_{10};d_{2})=(c_{1}-c_{2})b_{1}^{4}(b_{1}-1)^{4}(b_{1}-b_{2})(b_{1}+b_{2}-1)\cdot k_{14}.
\end{align*}
\begin{claim}\label{claim:b1neqb2}
    $b_{1}\ne b_{2}$ and $c_{1}\ne c_{2}$.
\end{claim}
\begin{proof}[Proof of Claim~\ref{claim:b1neqb2}]
If $b_{1}=b_{2}$, then we substitute this equation into $k_{0}$ to get a polynomial $k_{15}$, and substitute it into $k_{1}$ to get a polynomial $k_{16}$. Then $k_{15}$ and $k_{16}$ can be factorized as
\begin{align*}
&k_{15}=b_{2}(b_{2}-1)\cdot k_{17},\\
&k_{16}=b_{2}(b_{2}-1)\cdot k_{18}.
\end{align*}
We can compute to get that $R(k_{17},k_{18};c_{1})=(c_{2}-d_{2})(b_{2}-\frac{1}{3})(d_{1}-d_{2})$, hence $d_{1}=d_{2}$. Similarly, we can get that $c_{1}=c_{2}$. Therefore $(b_{1},c_{1},d_{1},1)=(b_{2},c_{2},d_{2},1)$, which is a contradiction.

Similarly, we can prove that $c_{1}\ne c_{2}$. This completes the proof of our claim.
\end{proof}
Now we can compute to get that
\begin{align*}
&k_{19}=R(k_{11},k_{12};c_{1}),\\
&R(k_{12},k_{13};c_{1})=b_1^2(b_1-b_2)^2(b_1^2-4b_1+1)^2\cdot k_{20},\\
&R(k_{12},k_{14};c_{1})=b_1^3(b_1-b_2)^3(b_1^2-4b_1+1)^3\cdot k_{21},\\
&k_{22}=R(k_{19},k_{20};b_{1}),\\
&k_{23}=R(k_{19},k_{21};b_{1}),
\end{align*}
where the polynomials $k_{i}\ (i=22,23)$ can be factorized as
\begin{align*}
k_{22}=&b_2^{89}(b_2-1)^{20}\bigg(b_2-\frac{2}{3}\bigg)^{17}\bigg(b_2-\frac{1}{3}\bigg)^{10}\bigg(b_2^5 - \frac{12757}{10872}b_2^4 + \frac{1123}{3624}b_2^3 + \frac{289}{1359}b_2^2 - \frac{49}{453}b_2 -\frac{2}{151}\bigg)^4\cdot k_{24},\\
k_{23}=&b_2^{91}(b_2-1)^{30}\bigg(b_2-\frac{2}{3}\bigg)^{26}\bigg(b_2-\frac{1}{3}\bigg)^{15}\bigg(b_2^5 - \frac{12757}{10872}b_2^4 + \frac{1123}{3624}b_2^3 + \frac{289}{1359}b_2^2 - \frac{49}{453}b_2 -\frac{2}{151}\bigg)^6\cdot k_{25}.
\end{align*}
Finally, we have $R(k_{24},k_{25};b_{2})\ne0$, which is a contradiction. Hence, from Eq. (\ref{thetaeq13}), there are at most 24+3=27 solutions for $x_{2}$.

Now for any fixed $x_{2}$, $g_{15}$ is a polynomial of $y_{1}$ with degree $4$. We write $g_{15}$ as $g_{15}=\sum_{i=0}^{4}l_{i}y_{1}^{i}$, then $l_{4}=b_{2}(b_{2}-1)x_{2}^{4}(x_{2}-1)^{2}\ne0$. Hence there are at most $4$ solutions for $y_{1}$. If $x_{2}$ and $y_{1}$ are given, then all the remaining variables are uniquely determined.

Hence, for any given $(b_{1},c_{1},d_{1},1),(b_{2},c_{2},d_{2},1)\in V_{1}$, there are at most $27\times4=108$ Berge $3$-paths of $(1,2,3,1)$-type with $(b_{1},c_{1},d_{1},1),(b_{2},c_{2},d_{2},1)$ being its end core vertices.

\subsection{$(1,2,3,2)$-type Berge $3$-paths}

For any given $(b_{1},c_{1},d_{1},1)\in V_{1},(b_{2},c_{2},d_{2},2)\in V_{2}$, suppose there exist $(u_{1},v_{1},w_{1},2)\in V_{2},(u_{2},v_{2},w_{2},3)\in V_{3}$ and $e(x_{1},x_{2},x_{3},a_{1}),e(y_{1},y_{2},y_{3},a_{2}),e(z_{1},z_{2},z_{3},a_{3})$ such that $(b_{1},c_{1},d_{1},1)$, $e(x_{1},x_{2},x_{3},a_{1})$, $(u_{1},v_{1},w_{1},2)$, $e(y_{1},y_{2},y_{3},a_{2})$, $(u_{2},v_{2},w_{2},3)$, $e(z_{1},z_{2},z_{3},a_{3})$and $(b_{2},c_{2},d_{2},2)$ form a copy of Berge $3$-path. Then by the definition of hypergraph $\mathcal{H}$, we have
\begin{align}
&b_{1}=x_{1},\label{tlheq31}\\
&c_{1}=x_{2}x_{3}+a_{1},\label{tlheq32}\\
&d_{1}=x_{2}^{2}x_{3}+a_{1},\label{tlheq33}\\
&u_{1}=x_{2}=y_{2},\label{tlheq34}\\
&v_{1}=x_{3}x_{1}+a_{1}=y_{3}y_{1}+a_{2},\label{tlheq35}\\
&w_{1}=x_{3}^{2}x_{1}+a_{1}=y_{3}^{2}y_{1}+a_{2},\label{tlheq36}\\
&b_{2}=z_{2},\label{tlheq37}\\
&c_{2}=z_{3}z_{1}+a_{3},\label{tlheq38}\\
&d_{2}=z_{3}^{2}z_{1}+a_{3},\label{tlheq39}\\
&u_{2}=y_{3}=z_{3},\label{tlheq40}\\
&v_{2}=y_{1}y_{2}+a_{2}=z_{1}z_{2}+a_{3},\label{tlheq41}\\
&w_{2}=y_{1}^{2}y_{2}+a_{2}=z_{1}^{2}z_{2}+a_{3}.\label{tlheq42}
\end{align}
Then we can get the following equations.
\begin{align*}
&f_1:=x_2x_3+a_1-c_1=0,\\
&f_2:=x_2^2x_3+a_1-d_1=0,\\
&f_3:=y_3z_1+a_3-c_2=0,\\
&f_4:=y_3^2z_1+a_3-d_2=0,\\
&f_5:=x_3b_1+a_1-y_3y_1-a_2=0,\\
&f_6:=x_3^2b_1+a_1-y_3^2y_1-a_2=0,\\
&f_7:=y_1x_2+a_2-z_1b_2-a_3=0,\\
&f_8:=y_1^2x_2+a_2-z_1^2b_2-a_3=0,
\end{align*}
where $f_{1}$ is from Eq. (\ref{tlheq32}), $f_{2}$ is from Eq. (\ref{tlheq33}), $f_{3}$ is from Eqs. (\ref{tlheq38}) and (\ref{tlheq40}), $f_{4}$ is from Eqs. (\ref{tlheq39}) and (\ref{tlheq40}), $f_{5}$ is from Eqs. (\ref{tlheq31}) and (\ref{tlheq35}), $f_{6}$ is from Eqs. (\ref{tlheq31}) and (\ref{tlheq36}),  $f_{7}$ is from Eqs. (\ref{tlheq34}), (\ref{tlheq37}) and (\ref{tlheq41}), $f_{8}$ is from Eqs. (\ref{tlheq34}), (\ref{tlheq37}) and (\ref{tlheq42}).

\begin{lemma}\
\begin{itemize}
  \item [(1)]  $b_{i},c_{i},d_{i},u_{i},v_{i},w_{i},x_{j},y_{j},z_{j}\not\in\{0,1\}$ for $1\le i\le2$ and $1\le j\le 3$,
  \item [(2)] $c_{1}\ne d_{1}$, $c_{2}\ne d_{2}$, $x_{2}\ne b_{2}$,
  \item [(3)] $x_{2}^{2}-x_{2}+c_{1}-d_{1}\ne0$.
\end{itemize}
\end{lemma}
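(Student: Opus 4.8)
The plan is to verify all three items directly from the definition of $\mathcal{H}$ and the incidence structure of a Berge $3$-path, in exactly the spirit of Lemma~\ref{thetalemma2}; no resultant computation is needed here.

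First, for (1): every vertex of $\mathcal{H}$ lies in $S_1\times S_2\times S_2\times\{1,2,3\}$, so its first coordinate is in $S_1\subseteq T_1=\{2,\dots,\frac{p-1}{2}\}$ and its remaining coordinates are in $S_2=\mathbb{F}_p\setminus\{0,1\}$; since $0,1\notin S_1\cup S_2$ this handles $b_i,c_i,d_i,u_i,v_i,w_i$. For $x_j,y_j,z_j$ I use that $e(x_1,x_2,x_3,a_1),e(y_1,y_2,y_3,a_2),e(z_1,z_2,z_3,a_3)$ are genuine edges, so the condition $e(\cdot)\subseteq V(\mathcal{H})$ forces $x_j,y_j,z_j\in S_1$, and again $0,1\notin S_1$.

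Next, for (2): the vertex $(b_1,c_1,d_1,1)$ is the $V_1$-component of $e(x_1,x_2,x_3,a_1)$, so $c_1=x_2x_3+a_1$ and $d_1=x_2^2x_3+a_1$ (these are $f_1=f_2=0$); thus $c_1=d_1$ would give $x_2(x_2-1)x_3=0$, impossible since $x_2,x_3\in S_1$ and $x_2\neq1$. The same computation for the $V_2$-component $(b_2,c_2,d_2,2)$ of $e(z_1,z_2,z_3,a_3)$, with $c_2=z_3z_1+a_3$ and $d_2=z_3^2z_1+a_3$, gives $c_2\neq d_2$. For $x_2\neq b_2$ I argue by contradiction: if $x_2=b_2$ then $y_2=z_2$ (since $x_2=y_2$ and $b_2=z_2$), and as $y_3=z_3$ the incidences at the core vertex $(u_2,v_2,w_2,3)$ give $y_1y_2+a_2=z_1z_2+a_3$ and $y_1^2y_2+a_2=z_1^2z_2+a_3$; subtracting and cancelling $y_2=z_2\neq0$ yields $(y_1-z_1)(y_1+z_1-1)=0$. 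But $y_1,z_1\in S_1\subseteq\{2,\dots,\frac{p-1}{2}\}$, so $y_1+z_1$ lies in $\{4,\dots,p-1\}$ and is never $1$ in $\mathbb{F}_p$; hence $y_1=z_1$, which forces $a_2=a_3$ and therefore $e(y_1,y_2,y_3,a_2)=e(z_1,z_2,z_3,a_3)$, contradicting the distinctness of the three edges of the Berge $3$-path.

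Finally, for (3): if $x_2^2-x_2+c_1-d_1=0$, then $f_1=f_2=0$ gives $c_1-d_1=(x_2-x_2^2)x_3$, hence $(x_2-x_2^2)(x_3-1)=0$; since $x_2\in S_1$ is neither $0$ nor $1$, this forces $x_3=1$, contradicting $x_3\in S_1$. Everything here is a one-line substitution except the argument for $x_2\neq b_2$, which is the only point needing a little care: it is not literally read off the definition but uses the Berge-path incidences together with the fact that $S_1$ is a "small" set, so two of its elements can never sum to $1$ modulo $p$.
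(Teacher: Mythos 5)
Your proof is correct and takes essentially the same approach as the paper, which likewise verifies (1)--(3) directly from the definition of $\mathcal{H}$ and only details the $x_2\ne b_2$ case via $y_2=z_2$, $y_1=z_1$, $a_2=a_3$. Your write-up is in fact slightly more careful: the subtraction of the $v_2,w_2$ incidences gives $(y_1-z_1)(y_1+z_1-1)=0$, and you explicitly rule out $y_1+z_1=1$ using $y_1,z_1\in S_1\subseteq T_1$ (or $T_2$), a step the paper silently omits.
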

\begin{proof}
We will only prove $x_{2}\ne b_{2}$, others are immediately from the definition of $\mathcal{H}$. If $x_{2}=b_{2}$, then $y_{2}=z_{2}$. By the equations on $v_{2}$ and $w_{2}$, we have $y_{1}=z_{1}$, and then $a_{2}=a_{3}$. This leads to $e(y_{1},y_{2},y_{3},a_{2})=e(z_{1},z_{2},z_{3},a_{3})$, which is a contradiction.
\end{proof}

Regarding $f_{i}$ as polynomials with variables $x_2,x_3,y_1,y_3,z_1,a_1,a_2,a_3$, we can compute to get that
 \begin{align*}
&g_{1}=R(f_{1},f_{2};a_{1}),&&g_{2}=R(f_{5},f_{6};a_{1}),\\
&g_{3}=R(f_{1},f_{5};a_{1}),&&g_{4}=R(f_{3},f_{4};a_{3}),\\
&g_{5}=R(f_{7},f_{8};a_{3}),&&g_{6}=R(f_{3},f_{7};a_{3}),\\
&g_{7}=R(g_{3},g_{6};a_{2}),&&g_{8}=R(g_{1},g_{2};x_{3}),\\
&g_{9}=R(g_{1},g_{7};x_{3}),&&g_{10}=R(g_{4},g_{5};z_{1}),\\
&g_{11}=R(g_{4},g_{9};z_{1}),&&g_{12}=R(g_{8},g_{10};y_{1}),\\
&g_{13}=R(g_{8},g_{11};y_{1}).
\end{align*}
By the MAGMA Program 13 in the Appendix B, the polynomials $g_{12}$ and $g_{13}$ can be factorized as
\begin{align*}
&g_{12}=y_{3}^{2}(y_{3}-1)^{2}x_{2}\cdot g_{14},\\
&g_{13}=y_{3}(y_{3}-1)x_{2}(x_{2}-1)\cdot g_{15}.
\end{align*}
Finally, we have
\begin{align*}
R(g_{14},g_{15};y_{3})=x_{2}^{2}(x_{2}-1)^{2}\cdot g_{16},
\end{align*}
where $g_{16}$ is a polynomial of $x_2$ with degree $18$. We write $g_{16}$ as $g_{16}=\sum_{i=0}^{18}h_{i}x_{2}^{i}$. We can compute to get that
\begin{align*}
h_{0}=(c_{1}-d_{1})^{10}b_{1}^{5}(b_{1}-1)\ne0.
\end{align*}
Hence, there are at most $18$ solutions for $x_{2}$.

Now for any fixed $x_{2}$, $g_{14}$ is a polynomial of $y_{3}$ with degree $2$. We write $g_{14}$ as $g_{14}=\sum_{i=0}^{2}l_{i}y_{3}^{i}$, then $l_{2}=x_{2}^{2}(x_{2}-1)^{2}l_{2}'$. We can regard $l_{i}$ as polynomials with variables $x_{2},b_{1},b_{2},c_{1},c_{2},d_{1},d_{2}$. Then by the MAGMA Program 14 in the Appendix B, we have
\begin{align*}
R(l_{0},l_{2}';b_{1})=(c_2-d_2)^2b_2(c_1-d_1)^2x_2^2(x_2-1)^4(x_2-b_2)(x_2^2-x_2+c_1-d_1)^2\ne0.
\end{align*}
Hence there are at most $2$ solutions for $y_{3}$. If $x_{2}$ and $y_{3}$ are given, then all the remaining variables are uniquely determined.

Hence, for any given $(b_{1},c_{1},d_{1},1)\in V_{1},(b_{2},c_{2},d_{2},2)\in V_{2}$, there are at most $18\times2=36$ Berge $3$-paths of $(1,2,3,2)$-type with $(b_{1},c_{1},d_{1},1),(b_{2},c_{2},d_{2},2)$ being its end core vertices.

\subsection{Proof of Theorem~\ref{thetathm}}

If the given two vertices are in the same part, without loss of generality, suppose $(b_{1},c_{1},d_{1},1), \\(b_{2},c_{2},d_{2},1)\in V_{1}$. Then there are two types of Berge $3$-paths: $(1,2,3,1)$-type and $(1,3,2,1)$-type. From the previous discussions, there are at most $108\times2=216$ such Berge $3$-paths in $\mathcal{H}$.

If the given two vertices are in different parts, without loss of generality, suppose $(b_{1},c_{1},d_{1},1)\in V_{1}, (b_{2},c_{2},d_{2},2)\in V_{2}$. Then there are three types of Berge $3$-paths: $(1,2,1,2)$-type, $(1,2,3,2)$-type and $(1,3,1,2)$-type. From the previous discussions, there are at most $8+36\times2=80$ such Berge $3$-paths in $\mathcal{H}$. Hence $\mathcal{H}$ is $\Theta_{3,217}^{B}$-free.

\section{Concluding remarks}\label{sectionconclusion}
In this paper, we determine the asymptotics of Tur\'{a}n number of $K_{3,25}'$ and linear Tur\'{a}n number of Berge theta hypergraph $\Theta_{3,217}^{B}$. Our main technique is the polynomial resultant. The parameters 25 and 217 may not be the best possible, hence improving these parameters will be interesting. As we have mentioned in Section~\ref{sectiotech}, polynomial resultant can settle some cases which are difficult to deal with by manual computation. We believe that the polynomial resultant can be used to construct more extremal graphs, it would be interesting to find more such examples.

\section*{Acknowledgements}
The authors thank Prof. Boris Bukh for his helpful discussions and kind suggestions. The research of Zixiang Xu was supported by IBS-R029-C4. The research of G. Ge was supported by the National Key Research and Development Program of China under Grant 2020YFA0712100, the National Natural Science Foundation of China under  Grant 12231014, and Beijing Scholars Program.

\section*{Appendix A}\label{appendix}
\begin{program}{MAGMA program for Subsection~\ref{subsec1} (Case $a_{1}\ne c_{1}$):}\label{subdprogram1}
\begin{align*}
&P<a1,a2,a3,b1,b2,b3,c1,c2,c3,w1,w2,w3,x1,y1,z1>:=PolynomialRing(RationalField(),15);\\
&f1:=a2-w2-(a1-w1)*x1^2;\\
&f2:=a3-w3-(a1^2-w1^2)*x1;\\
&f3:=b2-w2-(a1-w1)*y1^2;\\
&f4:=b3-w3-(a1^2-w1^2)*y1;\\
&f5:=c2-w2-(c1-w1)*z1^2;\\
&f6:=c3-w3-(c1^2-w1^2)*z1;\\
&g1:=Resultant(f1,f2,x1)\text{ div }(a1-w1);\\
&g2:=Resultant(f3,f4,y1)\text{ div }(a1-w1);\\
&g3:=Resultant(f5,f6,z1)\text{ div }(c1-w1);\\
&g4:=Resultant(g1,g2,w2)\text{ div }((a1-w1)*(a1+w1)^2);\\
&g5:=Resultant(g1,g3,w2);\\
&h:=Resultant(g4,g5,w3);\\
&h8:=Coefficient(h,w1,8);\\
&s1:=Coefficient(g4,w3,1);\\
&Factorization(h8);
\end{align*}
\end{program}

\begin{program}{MAGMA program for Subsection~\ref{subsec1} (Case $a_{1}=c_{1}$):}\label{subdprogram2}
\begin{align*}
&P<a1,a2,a3,b1,b2,b3,c1,c2,c3,w1,w2,w3,x1,y1,z1>:=PolynomialRing(RationalField(),15);\\
&f1:=a2-w2-(a1-w1)*x1^2;\\
&f2:=a3-w3-(a1^2-w1^2)*x1;\\
&f3:=b2-w2-(a1-w1)*y1^2;\\
&f4:=b3-w3-(a1^2-w1^2)*y1;\\
&f5:=c2-w2-(a1-w1)*z1^2;\\
&f6:=c3-w3-(a1^2-w1^2)*z1;\\
&g1:=Resultant(f1,f2,x1)\text{ div }(a1-w1);\\
&g2:=Resultant(f3,f4,y1)\text{ div }(a1-w1);\\
&g3:=Resultant(f5,f6,z1)\text{ div } (a1-w1);\\
&g4:=Resultant(g1,g2,w2)\text{ div } ((a1-w1)*(a1+w1)^2);\\
&g6:=Resultant(g1,g3,w2)\text{ div } ((a1-w1)*(a1+w1)^2);\\
&h':=Resultant(g4,g6,w3);\\
&k3:=Coefficient(h',w1,3);\\
&k0:=Coefficient(h',w1,0);\\
&k:=Resultant(k3,k0,a2);\\
&Factorization(k);
\end{align*}
\end{program}

\begin{program}{MAGMA program for Subsection~\ref{subsec2} (Case $a_{2}\ne c_{2}$):}\label{subdprogram3}
\begin{align*}
&P<a1,a2,a3,b1,b2,b3,c1,c2,c3,w1,w2,w3,x1,y1,z1>:=PolynomialRing(RationalField(),15);\\
&f1:=a2-w2-(a1-w1)*x1^2;\\
&f2:=a3-w3-(a1^2-w1^2)*x1;\\
&f3:=a2-w2-(b1-w1)*y1^2;\\
&f4:=b3-w3-(b1^2-w1^2)*y1;\\
&f5:=c2-w2-(c1-w1)*z1^2;\\
&f6:=c3-w3-(c1^2-w1^2)*z1;\\
&g1:=Resultant(f1,f2,x1) \text{ div } (a1-w1);\\
&g2:=Resultant(f3,f4,y1) \text{ div } (b1-w1);\\
&g3:=Resultant(f5,f6,z1) \text{ div } (c1-w1);\\
&g4:=Resultant(g1,g2,w2);\\
&g5:=Resultant(g1,g3,w2);\\
&h:=Resultant(g4,g5,w3) \text{ div } ((a1-w1)^2*(a1+w1)^4);\\
&h10:=Coefficient(h,w1,10);\\
&s2:=Coefficient(g4,w3,2) \text{ div } (a1-b1);\\
&t2:=Coefficient(g5,w3,2) \text{ div } (a1-c1);\\
&Factorization(h10);\\
&Factorization(s2-t2);
\end{align*}
\end{program}

\begin{program}{MAGMA program for Subsection~\ref{subsec2} (Case $a_{2}=c_{2}$ and $(a_{3}-b_{3})(b_{3}-c_{3})(a_{3}-c_{3})\ne0$):}\label{subdprogram4}
\begin{align*}
&P<a1,a2,a3,b1,b2,b3,c1,c2,c3,w1,w2,w3,x1,y1,z1>:=PolynomialRing(RationalField(),15);\\
&f1:=a2-w2-(a1-w1)*x1^2;\\
&f2:=a3-w3-(a1^2-w1^2)*x1;\\
&f3:=a2-w2-(b1-w1)*y1^2;\\
&f4:=b3-w3-(b1^2-w1^2)*y1;\\
&f5:=a2-w2-(c1-w1)*z1^2;\\
&f6:=c3-w3-(c1^2-w1^2)*z1;\\
&g1:=Resultant(f1,f2,x1) \text{ div } (a1-w1);\\
&g2:=Resultant(f3,f4,y1) \text{ div } (b1-w1);\\
&g3:=Resultant(f5,f6,z1) \text{ div } (c1-w1);\\
&g4:=Resultant(g1,g2,w2);\\
&g5:=Resultant(g1,g3,w2);\\
&h:=Resultant(g4,g5,w3) \text{ div } ((a1-w1)^2*(a1+w1)^4);\\
&h5:=Coefficient(h,w1,5);\\
&h4:=Coefficient(h,w1,4);\\
&Factorization(Resultant(h4,h5,a1));
\end{align*}
\end{program}

\begin{program}{MAGMA program for Subsection~\ref{subsec2} (Case $a_{2}=c_{2}$ and $a_{3}=b_{3}$):}\label{subdprogram5}
\begin{align*}
&P<a1,a2,a3,b1,b2,b3,c1,c2,c3,w1,w2,w3,x1,y1,z1>:=PolynomialRing(RationalField(),15);\\
&f1:=a2-w2-(a1-w1)*x1^2;\\
&f2:=a3-w3-(a1^2-w1^2)*x1;\\
&f3:=a2-w2-(b1-w1)*y1^2;\\
&f4:=a3-w3-(b1^2-w1^2)*y1;\\
&f5:=a2-w2-(c1-w1)*z1^2;\\
&f6:=c3-w3-(c1^2-w1^2)*z1;\\
&g1:=Resultant(f1,f2,x1) \text{ div } (a1-w1);\\
&g2:=Resultant(f3,f4,y1) \text{ div } (b1-w1);\\
&g3:=Resultant(f5,f6,z1) \text{ div } (c1-w1);\\
&g4:=Resultant(g1,g2,w2) \text{ div }((a3-w3)^2*(a1-b1));\\
&g5:=Resultant(g1,g3,w2);\\
&t2:=Coefficient(g5,w3,2) \text{ div } (a1-c1);\\
&Factorization(g4-t2);
\end{align*}
\end{program}

\begin{program}{MAGMA program for Lemma~\ref{subdlemma2}:}\label{subdprogram6}
\begin{align*}
&P<a,b,c,d,x,y,z,w>:=PolynomialRing(RationalField(),8);\\
&g1:=(y+z)*(b+c)-(w-a)*(d-x);\\
&g2:=(x+z)*(a+c)-(w-b)*(d-y);\\
&g3:=(x+y)*(a+b)-(w-c)*(d-z);\\
&h1:=Resultant(g1,g2,d) \text{ div } (a+b+c-w);\\
&h2:=Resultant(g1,g3,d) \text{ div } (a+b+c-w);\\
&Factorization(Resultant(h1,h2,w));
\end{align*}
\end{program}

\begin{program}{MAGMA program for Claim~\ref{subdclaim6}:}\label{subdprogram8}
\begin{align*}
&P<a1,a2,a3,b1,b2,b3,c1,c2,c3,w1,w2,w3,x1,y1,z1>:=PolynomialRing(RationalField(),15);\\
&f1:=a2-w2-(a1-w1)*x1^2;\\
&f2:=a3-w3-(a1^2-w1^2)*x1;\\
&f3:=b2-w2-(b1-w1)*y1^2;\\
&f4:=b3-w3-(b1^2-w1^2)*y1;\\
&f5:=c2-w2-(c1-w1)*z1^2;\\
&f6:=c3-w3-(c1^2-w1^2)*z1;\\
&f7:=(a1-b1)*(y1-z1)-(b1-c1)*(x1-y1);\\
&g1:=f1-f3;\\
&g2:=f1-f5;\\
&g3:=f2-f4;\\
&g5:=Resultant(g1,g2,w1);\\
&g6:=Resultant(g1,g3,w1) \text{ div } (x1-y1);\\
&h1:=Resultant(f7,g5,x1) \text{ div } (y1-z1);\\
&h2:=Resultant(f7,g6,x1);\\
&s:=Resultant(h1,h2,z1) \text{ div }((b1-c1)^3*(a1-b1)^6*(a1*y1^2-a2-b1*y1^2+b2)^2);\\
&s8:=Coefficient(s,y1,8) \text{ div } ((b1-c1)^3*(a1-c1)^4*(a1-b1)^4);\\
&s7:=Coefficient(s,y1,7) \text{ div } ((b1-c1)^3*(a1-c1)^3*(a1-b1)^3*(a3-b3));\\
&Factorization(Resultant(s7,s8,a1));
\end{align*}
\end{program}

\begin{program}{MAGMA program for Subsection~\ref{subsec3}:}\label{subdprogram9}
\begin{align*}
&P<a1,a2,a3,b1,b2,b3,c1,c2,c3,w1,w2,w3,x1,y1,z1>:=PolynomialRing(RationalField(),15);\\
&f1:=a2-w2-(a1-w1)*x1^2;\\
&f2:=a3-w3-(a1^2-w1^2)*x1;\\
&f3:=b2-w2-(b1-w1)*y1^2;\\
&f4:=b3-w3-(b1^2-w1^2)*y1;\\
&f5:=c2-w2-(c1-w1)*z1^2;\\
&f6:=c3-w3-(c1^2-w1^2)*z1;\\
&g1:=Resultant(f1,f2,x1) \text{ div } (a1-w1);\\
&g2:=Resultant(f3,f4,y1) \text{ div } (b1-w1);\\
&g3:=Resultant(f5,f6,z1) \text{ div } (c1-w1);\\
&g4:=Resultant(g1,g2,w2);\\
&g5:=Resultant(g1,g3,w2);\\
&h:=Resultant(g4,g5,w3) \text{ div } ((a1-w1)^2*(a1+w1)^4);\\
&h10:=a1*b2-a1*c2-a2*b1+a2*c1+b1*c2-b2*c1;\\
&h8:=Coefficient(h,w1,8);\\
&h6:=Coefficient(h,w1,6);\\
&k1:=Resultant(h10,h8,c2) \text{ div } ((a2-b2)*(a1-b1));\\
&k2:=Resultant(h10,h6,c2) \text{ div } ((a2-b2)*(a1-b1));\\
&Factorization(Resultant(k1,k2,c3));\\
&Factorization(Resultant(k1,k2,b2));\\
&k3:=Resultant(h10,h8,b2) \text{ div } ((a2-c2)*(a1-c1));\\
&k4:=Resultant(h10,h6,b2) \text{ div } ((a2-c2)*(a1-c1));\\
&Factorization(Resultant(k3,k4,c3));
\end{align*}
\end{program}

\section*{Appendix B}
\begin{program}{MAGMA program for (1,2,1,2)-type: Lemma 3.3}\label{thetaprogram1}
\begin{align*}
&P<x1,x2,x3,y1,y3,z2,z3,t3>:=PolynomialRing(RationalField(),8);\\
&f1:=x2*x3-x3*x1+t3*x1-z2*t3+z2*z3-z3*y1+y3*y1-x2*y3;\\
&f2:=x2*x3-x2^2*x3-z2*t3+z2^2*t3;\\
&f3:=x3*x1-x3^2*x1-y3*y1+y3^2*y1;\\
&f4:=z3*y1-z3^2*y1-t3*x1+t3^2*x1;\\
&f5:=x2*y3-x2^2*y3-z2*z3+z2^2*z3;\\
&g1:=Resultant(f3,f4,x1);\\
&g2:=Resultant(f2,f5,x2);\\
&h:=Resultant(g1,g2,x3);\\
&Factorization(h);
\end{align*}
\end{program}

\begin{program}{MAGMA program for (1,2,1,2)-type: General}\label{thetaprogram2}
\begin{align*}
&P<x2,x3,y1,y3,z3,a1,a2,a3,b1,c1,d1,b2,c2,d2>:=PolynomialRing(RationalField(),14);\\
&f1:=x2*x3+a1-c1;\\
&f2:=x2^2*x3+a1-d1;\\
&f3:=z3*y1+a3-c2;\\
&f4:=z3^2*y1+a3-d2;\\
&f5:=x3*b1+a1-y3*y1-a2;\\
&f6:=x3^2*b1+a1-y3^2*y1-a2;\\
&f7:=x2*y3+a2-b2*z3-a3;\\
&f8:=x2^2*y3+a2-b2^2*z3-a3;\\
&g1:=Resultant(f2,f1,a1);\\
&g2:=Resultant(f6,f5,a1);\\
&g3:=Resultant(f5,f1,a1);\\
&g4:=Resultant(f3,f4,a3);\\
&g5:=Resultant(f8,f7,a3);\\
&g6:=Resultant(f7,f3,a3);\\
&g7:=Resultant(g6,g3,a2);\\
&g8:=Resultant(g2,g1,x3);\\
&g9:=Resultant(g7,g1,x3);\\
&g10:=Resultant(g4,g5,z3);\\
&g11:=Resultant(g9,g5,z3);\\
&g14:=Resultant(g8,g10,y1) div (y3*x2*(x2-1));\\
&g15:=Resultant(g8,g11,y1) div (y3*x2*(x2-1));\\
&g16:=Resultant(g14,g15,y3) div (b1*x2^2*(c1-d1)*(x2-1)^2*(x2-b2)*(x2+b2-1)\\
&*(x2^2-x2+c1-d1));\\
&h4:=Coefficient(g16,x2,4);\\
&h3:=Coefficient(g16,x2,3);\\
&h2:=Coefficient(g16,x2,2);\\
&h1:=Coefficient(g16,x2,1);\\
&h5:=Coefficient(g16,x2,0) div (-b1*c1^2 + 2*b1*c1*d1 - b1*c1*b2^2 + b1*c1*b2 - b1*d1^2 + \\
&b1*d1*b2^2 - b1*d1*b2 - b2^4*c2 + b2^4*d2 + 2*b2^3*c2 - 2*b2^3*d2 - b2^2*c2 + b2^2*d2);\\
&Factorization(Resultant(h1,h5,c1));
\end{align*}
\end{program}

\begin{program}{MAGMA program for (1,2,1,2)-type: Claim}\label{thetaprogram3}
\begin{align*}
&P<x2,x3,y1,y3,z3,a1,a2,a3,b1,c1,d1,b2,c2,d2>:=PolynomialRing(RationalField(),14);\\
&f1:=x2*x3+a1-c1;\\
&f2:=x2^2*x3+a1-d1;\\
&f3:=z3*y1+a3-c2;\\
&f4:=z3^2*y1+a3-d2;\\
&f5:=x3*b1+a1-y3*y1-a2;\\
&f6:=x3^2*b1+a1-y3^2*y1-a2;\\
&f7:=x2*y3+a2-b2*z3-a3;\\
&f8:=x2^2*y3+a2-b2^2*z3-a3;\\
&g1:=Resultant(f2,f1,a1);\\
&g2:=Resultant(f6,f5,a1);\\
&g3:=Resultant(f5,f1,a1);\\
&g4:=Resultant(f3,f4,a3);\\
&g5:=Resultant(f8,f7,a3);\\
&g6:=Resultant(f7,f3,a3);\\
&g7:=Resultant(g6,g3,a2);\\
&g8:=Resultant(g2,g1,x3);\\
&g9:=Resultant(g7,g1,x3);\\
&g10:=Resultant(g4,g5,z3);\\
&g11:=Resultant(g9,g5,z3);\\
&g14:=Resultant(g8,g10,y1) div (y3*x2*(x2-1));\\
&g15:=Resultant(g8,g11,y1) div (y3*x2*(x2-1));\\
&g16:=Resultant(g14,g15,y3) div (b1*x2^2*(c1-d1)*(x2-1)^2*(x2-b2)*(x2+b2-1)\\
&*(x2^2-x2+c1-d1));\\
&h1:=Coefficient(g16,x2,1);\\
&g17:=-b1*c1^2 + 2*b1*c1*d1 - b1*c1*b2^2 + b1*c1*b2 - b1*d1^2 + b1*d1*b2^2 - b1*d1*b2\\
& - b2^4*c2 + b2^4*d2 + 2*b2^3*c2 - 2*b2^3*d2 - b2^2*c2 + b2^2*d2;\\
&Factorization(Resultant(g17,h1,d2));\\
&Factorization(Resultant(g17,h1,c2));
\end{align*}
\end{program}

\begin{program}{MAGMA program for (1,2,3,1)-type}\label{thetaprogram4}
\begin{align*}
&P<x2,x3,y1,y3,z2,a1,a2,a3,b1,c1,d1,b2,c2,d2>:=PolynomialRing(RationalField(),14);\\
&f1:=x2*x3+a1-c1;\\
&f2:=x2^2*x3+a1-d1;\\
&f3:=z2*y3+a3-c2;\\
&f4:=z2^2*y3+a3-d2;\\
&f5:=x3*b1+a1-y3*y1-a2;\\
&f6:=x3^2*b1+a1-y3^2*y1-a2;\\
&f7:=y1*x2+a2-b2*z2-a3;\\
&f8:=y1^2*x2+a2-b2^2*z2-a3;\\
&g1:=Resultant(f2,f1,a1);\\
&g2:=Resultant(f6,f5,a1);\\
&g3:=Resultant(f5,f1,a1);\\
&g4:=Resultant(f3,f4,a3);\\
&g5:=Resultant(f8,f7,a3);\\
&g6:=Resultant(f7,f3,a3);\\
&g7:=Resultant(g6,g3,a2);\\
&g8:=Resultant(g2,g1,x3);\\
&g9:=Resultant(g7,g1,x3);\\
&g10:=Resultant(g5,g4,z2);\\
&g11:=Resultant(g9,g5,z2);\\
&g14:=Resultant(g10,g11,y3) div (b2*y1*x2);\\
&g15:=Resultant(g8,g11,y3) div (y1*x2^2*(x2-1)^2);\\
&g18:=Resultant(g14,g15,y1) div (b2^5*(b2-1)^8*x2^9*(x2-1)*(x2^3 + x2^2*c1 + x2^2*b2^2 - \\
&2*x2^2*b2 - x2^2*c2 - x2^2 - x2*d1 - x2*b2^3 +2*x2*b2 + x2*c2 - b1*c1 + b1*d1 + b2^3 - b2^2)^2);\\
&k0:=Coefficient(g18,x2,0) div (b2^2*(b2-1)^2*(c1-d1)^{10}*b1^5);\\
&k1:=Coefficient(g18,x2,1) div (b2*(b2-1)*(c1-d1)^9*b1^5);\\
&k2:=Coefficient(g18,x2,2) div ((c1-d1)^8*b1^4);\\
&k3:=Coefficient(g18,x2,3) div ((c1-d1)^7*b1^4);\\
&k4:=Coefficient(g18,x2,4) div ((c1-d1)^6*b1^3);\\
&k5:=Coefficient(g18,x2,24) div ((c2-d2)*b2^4*(b2-1)^2);\\
&k6:=Resultant(k0,k1,d1) div ((c2-d2)*b2^2*(b2-1)^2);\\
&k7:=Resultant(k0,k5,d1);
\end{align*}
\end{program}

\begin{align*}
&k8:=Resultant(k0,k2,d1) div ((c2-d2)*b2^3*(b2-1)^3*b1);\\
&k9:=Resultant(k0,k3,d1) div ((c2-d2)*b2^2*(b2-1)^3);\\
&k10:=Resultant(k0,k4,d1) div ((c2-d2)*b2^3*(b2-1)^3*b1);\\
&k11:=Resultant(k6,k7,d2) div (c1-c2);\\
&k12:=Resultant(k6,k8,d2) div ((c1-c2)*b1^2*(b1-1)^2*(b1-b2)*(b1+b2-1));\\
&k13:=Resultant(k6,k9,d2) div ((c1-c2)*b1^4*(b1-1)^3*(b1-b2)*(b1+b2-1));\\
&k14:=Resultant(k6,k10,d2) div ((c1-c2)*b1^4*(b1-1)^4*(b1-b2)*(b1+b2-1));\\
&k19:=Resultant(k11,k12,c1);\\
&k20:=Resultant(k12,k13,c1) div (b1^2*(b1-b2)^2*(b1^2-4*b1+1)^2);\\
&k21:=Resultant(k12,k14,c1) div (b1^3*(b1-b2)^3*(b1^2-4*b1+1)^3);\\
&k24:=Resultant(k19,k20,b1) div (b2^{89}*(b2-1)^{20}*(b2-2/3)^{17}*(b2-1/3)^{10}*\\
&(b2^5-12757/10872*b2^4+1123/3624*b2^3+289/1359*b2^2-49/453*b2-2/151)^4);\\
&k25:=Resultant(k19,k21,b1) div (b2^{91}*(b2-1)^{30}*(b2-2/3)^{26}*(b2-1/3)^{15}*\\
&(b2^5-12757/10872*b2^4+1123/3624*b2^3+289/1359*b2^2-49/453*b2-2/151)^6);\\
&Resultant(k24,k25,b2);
\end{align*}

\begin{program}{MAGMA program for (1,2,3,2)-type: Determine the number of solutions for $x2$}\label{thetaprogram5}
\begin{align*}
&P<x2,x3,y1,y3,z1,a1,a2,a3,b1,c1,d1,b2,c2,d2>:=PolynomialRing(RationalField(),14);\\
&f1:=x2*x3+a1-c1;\\
&f2:=x2^2*x3+a1-d1;\\
&f3:=y3*z1+a3-c2;\\
&f4:=y3^2*z1+a3-d2;\\
&f5:=x3*b1+a1-y3*y1-a2;\\
&f6:=x3^2*b1+a1-y3^2*y1-a2;\\
&f7:=y1*x2+a2-z1*b2-a3;\\
&f8:=y1^2*x2+a2-z1^2*b2-a3;\\
&g1:=Resultant(f2,f1,a1);\\
&g2:=Resultant(f6,f5,a1);\\
&g3:=Resultant(f5,f1,a1);\\
&g4:=Resultant(f3,f4,a3);\\
&g5:=Resultant(f8,f7,a3);\\
&g6:=Resultant(f7,f3,a3);\\
&g7:=Resultant(g6,g3,a2);\\
&g8:=Resultant(g2,g1,x3);\\
&g9:=Resultant(g7,g1,x3);\\
&g10:=Resultant(g5,g4,z1);\\
&g11:=Resultant(g9,g4,z1);\\
&g14:=Resultant(g8,g10,y1) div (y3^2*(y3-1)^2*x2);\\
&g15:=Resultant(g8,g11,y1) div (y3*(y3-1)*x2*(x2-1));\\
&g16:=Resultant(g14,g15,y3) div (x2^2*(x2-1)^2);\\
&Factorization(Coefficient(g16,x2,0));
\end{align*}
\end{program}

\begin{program}{MAGMA program for (1,2,3,2)-type: Given $x2$, determine the number of solutions for $y3$}\label{thetaprogram6}
\begin{align*}
&P<x2,x3,y1,y3,z1,a1,a2,a3,b1,c1,d1,b2,c2,d2>:=PolynomialRing(RationalField(),14);\\
&f1:=x2*x3+a1-c1;\\
&f2:=x2^2*x3+a1-d1;\\
&f3:=y3*z1+a3-c2;\\
&f4:=y3^2*z1+a3-d2;\\
&f5:=x3*b1+a1-y3*y1-a2;\\
&f6:=x3^2*b1+a1-y3^2*y1-a2;\\
&f7:=y1*x2+a2-z1*b2-a3;\\
&f8:=y1^2*x2+a2-z1^2*b2-a3;\\
&g1:=Resultant(f2,f1,a1);\\
&g2:=Resultant(f6,f5,a1);\\
&g4:=Resultant(f3,f4,a3);\\
&g5:=Resultant(f8,f7,a3);\\
&g8:=Resultant(g2,g1,x3);\\
&g10:=Resultant(g5,g4,z1);\\
&g14:=Resultant(g8,g10,y1) div (y3^2*(y3-1)^2*x2);\\
&l2:=Coefficient(g14,y3,2) div (x2^2*(x2-1)^2);\\
&l0:=Coefficient(g14,y3,0);\\
&Factorization(Resultant(l2,l0,b1));
\end{align*}
\end{program}

\end{document}